\newcommand{\R}{\mathbb{R}}
\newcommand{\C}{\mathbb{C}}
\newcommand{\N}{\mathbb{N}}
\newcommand{\Z}{\mathbb{Z}}
\newcommand{\E}{\mathbb{E}}
\newcommand{\scrF}{\mathscr{F}}
\newcommand{\bla}{\big \langle}
\newcommand{\bra}{\big \rangle}
\numberwithin{equation}{section}
\newcommand{\ud}[0]{\,\mathrm{d}}
\newcommand{\esssup}[0]{\operatornamewithlimits{ess\,sup}}
\newcommand{\ave}[1]{\langle #1\rangle}
\newcommand{\BMO}[0]{\operatorname{BMO}}
\renewcommand{\Re}[0]{\operatorname{Re}}
\newcommand{\ch}[0]{\operatorname{ch}}
\newcommand{\calD}[0]{\mathcal{D}}
\newcommand{\wt}[1]{{\widetilde{#1}}}
\theoremstyle{plain}
\newtheorem{thm}[equation]{Theorem}
\newtheorem{lem}[equation]{Lemma}
\newtheorem{prop}[equation]{Proposition}
\theoremstyle{definition}
\newtheorem{defn}[equation]{Definition}
\theoremstyle{remark}
\newtheorem{rem}[equation]{Remark}
\title{Genuinely multilinear weighted estimates for singular integrals in product spaces}
\author{Kangwei Li}
\author{Henri Martikainen}
\author{Emil Vuorinen}
\address[K.L.]{Center for Applied Mathematics, Tianjin University, Weijin Road 92, 300072 Tianjin, China}
\email{kli@tju.edu.cn}
\address[H.M. \& E.V.]{Department of Mathematics and Statistics, University of Helsinki, P.O.B. 68, FI-00014 University of Helsinki, Finland}
\email{henri.martikainen@helsinki.fi}
\email{emil.vuorinen@helsinki.fi}
\subjclass[2010]{42B20}
\keywords{singular integrals, multilinear analysis, multi-parameter analysis, weighted estimates, commutators}
\begin{document}

\allowdisplaybreaks

\begin{abstract}
We prove genuinely multilinear weighted estimates for singular integrals in product spaces.
The estimates complete the qualitative weighted theory in this setting.
Such estimates were previously known only in the one-parameter situation. 
Extrapolation gives powerful applications -- for example, a free access to mixed-norm
estimates in the full range of exponents.
\end{abstract}

\maketitle

\section{Introduction}
For given exponents $1 < p_1, \ldots, p_n < \infty$ and $1/p = \sum_i 1/p_i> 0$, a natural form of a weighted estimate in the $n$-variable context has the form
$$
\Big \|g \prod_{i=1}^n w_i \Big \|_{L^p} \lesssim \prod_{i=1}^n \|f_i w_i\|_{L^{p_i}}
$$
for some functions $f_1, \ldots, f_n$ and $g$. It is natural to initially assume that $w_i^{p_i} \in A_{p_i}$, where
$A_q$ stands for the classical Muckenhoupt weights. Even with this assumption the target weight only satisfies $\prod_{i=1}^n w_i^p \in A_{np} \supsetneq A_p$ making
the case $n \ge 2$ have a different flavour than the classical case $n=1$. Importantly, it turns out to be
very advantageous -- we get to the application later -- to only impose a weaker \emph{joint} condition on the tuple of weights $\vec w = (w_1, \ldots, w_n)$ rather
than to assume individual conditions on the weights $w_i^{p_i}$. This gives the problem a genuinely multilinear nature.
For many fundamental mappings $(f_1, \ldots, f_n) \mapsto g(f_1, \ldots, f_n)$, such as the $n$-linear maximal function,
these joint conditions on the tuple $\vec w$ are necessary and sufficient for the weighted bounds.

Genuinely multilinear weighted estimates were first proved for $n$-linear \emph{one-parameter} singular integral operators (SIOs)
by Lerner, Ombrosi, P\'erez, Torres and Trujillo-Gonz\'alez in the extremely influential paper \cite{LOPTT}.
A basic model of an $n$-linear SIO $T$ in $\R^d$ is obtained by setting
\begin{equation*}\label{eq:multilinHEUR}
T(f_1,\ldots, f_n)(x) = U(f_1 \otimes \cdots \otimes f_n)(x,\ldots,x), \qquad x \in \R^d,\, f_i \colon \R^d \to \C,
\end{equation*}
where $U$ is a linear SIO in $\R^{nd}$. See e.g. Grafakos--Torres \cite{GT} for the basic theory.
Estimates for SIOs play a fundamental role in pure and applied analysis -- 
for example, $L^p$ estimates for the homogeneous fractional derivative $D^{\alpha} f=\mathcal F^{-1}(|\xi|^{\alpha} \widehat f(\xi))$ of a product of two or more functions, the \emph{fractional Leibniz rules}, are used in the area of dispersive equations, see e.g. Kato--Ponce \cite{KP} and Grafakos--Oh \cite{GO}.

In the usual one-parameter context of \cite{LOPTT} there is a general philosophy that the maximal function controls SIOs $T$ -- in fact, we have
the concrete estimate
\begin{equation}\label{eq:CF}
\|T(f_1, \ldots, f_n)w\|_{L^p}  \lesssim \|M(f_1, \ldots, f_n)w\|_{L^p}, \qquad p > 0, \, w^p \in A_{\infty}.
\end{equation}
Thus, the heart of the matter of \cite{LOPTT} reduces to the maximal function $$M(f_1, \ldots, f_n) = \sup_I 1_I \prod_{i=1}^n \langle |f_i|\rangle_I,$$ where  $\langle |f_i|\rangle_I
= \fint_I |f_i| = \frac{1}{|I|} \int_I |f_i|$ and the supremum is over cubes $I \subset \R^d$.

In this paper we prove genuinely multilinear weighted estimates for multi-parameter SIOs
in the product space $\R^d = \prod_{i=1}^m \R^{d_i}$. For the classical linear multi-parameter theory and some of its original applications
see e.g. \cite{CF1, CF2, RF1, RF2, RF3, FS, FL, Jo}.  Multilinear multi-parameter estimates arise naturally in applications whenever
a multilinear phenomena, like the fractional Leibniz rules, are combined with product type estimates, such as those that arise when we want to take different partial fractional derivatives $D^{\alpha}_{x_1} D^{\beta}_{x_2} f$. We refer to our recent work \cite{LMV} for a thorough general background on the subject.

It is already known \cite{GLTP} that the multi-parameter maximal function  $(f_1, \ldots, f_n) \mapsto \sup_R 1_R \prod_{i=1}^n \langle |f_i|\rangle_R$, where the supremum is over rectangles $R = \prod_{i=1}^m I^i \subset  \prod_{i=1}^m \R^{d_i}$ with sides parallel to the axes, satisfies the desired genuinely multilinear weighted estimates.
However, in contrast to the one-parameter case, there is no known general principle which would automatically imply the corresponding weighted estimate
for multi-parameter SIOs from the maximal function estimate. In particular, no estimate like \eqref{eq:CF} is known.
In the paper \cite{LMV} we developed the general theory of bilinear bi-parameter SIOs including weighted estimates under the more
restrictive assumption $w_i^{p_i} \in A_{p_i}$. In fact, we only reached these weighted estimates without any additional cancellation assumptions of the type $T1=0$ in \cite{ALMV}.

There are no genuinely multilinear weighted estimates for \textbf{any} multi-parameter SIOs in the literature -- not even for the bi-parameter analogues (see e.g. \cite[Appendix A]{LMV})
of Coifman--Meyer \cite{CM} type multilinear multipliers. Almost ten years after the maximal function result \cite{GLTP} we establish these missing bounds -- not only for some special SIOs -- but for a very general class of $n$-linear $m$-parameter SIOs. With weighted bounds previously being known both in the linear multi-parameter setting \cite{RF1, RF2, HPW} and
in the multilinear one-parameter setting \cite{LOPTT}, we finally establish a holistic view completing the theory of qualitative weighted estimates in the joint
presence of multilinearity and product space theory. 

With the understanding that a Calder\'on--Zygmund operator (CZO) is an SIO
satisfying natural $T1$ type assumptions, our main result reads as follows.
\begin{thm}\label{thm:intro1}
Suppose $T$ is an $n$-linear $m$-parameter CZO in $\R^d = \prod_{i=1}^m \R^{d_i}$.
If $1 < p_1, \ldots, p_n \le \infty$ and $1/p = \sum_{i=1}^n 1/p_i> 0$, we have
$$
\|T(f_1, \ldots, f_n)w \|_{L^p} \lesssim \prod_{i=1}^n \|f_i w_i\|_{L^{p_i}}, \qquad w = \prod_{i=1}^n w_i, 
$$
for all $n$-linear $m$-parameter weights $\vec w = (w_1, \ldots, w_n) \in A_{\vec p}$, $\vec p = (p_1, \ldots, p_n)$. Here $\vec w \in A_{\vec p}$ if
$$
[\vec{w}]_{A_{\vec p}}
:=\sup_R \, \ave{w^p}_R^{\frac 1 p} \prod_{i=1}^n \ave{w_i^{-p_i'}}_R^{\frac 1{p_i'}} < \infty,
$$
where the supremum is over all rectangles $R \subset \R^d$.
\end{thm}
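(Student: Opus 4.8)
The strategy is the standard modern route: reduce the CZO to a sum of dyadic model operators via a representation theorem, establish a sparse (or, equivalently, positive multi-parameter maximal-type) domination for each model operator, and conclude by the already-known genuinely multilinear weighted estimate for the multi-parameter maximal function together with multilinear extrapolation. In more detail, one first invokes the $n$-linear $m$-parameter dyadic representation theorem (of the type developed in \cite{LMV}, extended to general $n$ and $m$) to write
\[
\langle T(f_1,\ldots,f_n),g\rangle = C\,\mathbb{E}_{\omega}\sum_{\vec{k}} 2^{-\delta|\vec k|}\,\langle S_{\vec{k}}^{\omega}(f_1,\ldots,f_n),g\rangle,
\]
where each $S_{\vec k}^{\omega}$ is a finite sum of $n$-linear $m$-parameter dyadic shifts (and paraproduct-type and partial-paraproduct-type pieces) built on random dyadic grids, with complexity $\vec k = (k_1^1,\ldots)$. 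The decay factor $2^{-\delta|\vec k|}$ absorbs any polynomial complexity growth in the bounds for the models, so it suffices to prove the weighted bound for a single model operator $S$ with a constant that is at most polynomial in the complexity.

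The core step is to prove, for each such dyadic model operator $S$, a \emph{sparse domination} of the form
\[
|\langle S(f_1,\ldots,f_n),g\rangle| \lesssim \sum_{R\in\mathcal{S}} |R|\,\prod_{i=1}^n \langle |f_i|\rangle_R\,\langle |g|\rangle_R,
\]
where $\mathcal{S}$ is a sparse collection of rectangles — or, more honestly in the product setting, a suitable iterated/"Journé-adapted" sparse family, since genuine sparseness over arbitrary rectangles fails in several parameters. This is where the real work lies: one runs a stopping-time argument simultaneously in all $m$ parameters, controlling the output of the shift on stopping rectangles by the product of averages, while handling the part of the operator that "lives between stopping times" via the BMO/Carleson structure encoded in the paraproduct components. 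For the pure shifts the stopping argument is a multi-parameter analogue of the one-parameter sparse extraction; for the (partial) paraproducts one uses the multi-parameter Carleson embedding theorem. Once the sparse bound is in hand, one deduces the pointwise/integral domination
\[
|S(f_1,\ldots,f_n)| \lesssim M_{\mathcal D}(f_1,\ldots,f_n)
\]
in a weak sense sufficient for weighted estimates, or simply estimates the sparse form directly against $\prod_i\|f_iw_i\|_{L^{p_i}}$ using the definition of $[\vec w]_{A_{\vec p}}$ and Hölder's inequality, exactly as in \cite{LOPTT} but with cubes replaced by rectangles.

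Finally, the passage to the full range $1<p_1,\ldots,p_n\le\infty$ and $1/p=\sum_i 1/p_i>0$ (including the exponents $p_i=\infty$ and all $p\le 1$) is obtained from the case of a single well-chosen tuple of exponents by \emph{multilinear limited-range extrapolation} adapted to $A_{\vec p}$ classes; this is a purely structural argument once a single estimate is available, and it is also what gives the mixed-norm applications advertised in the introduction. The main obstacle I anticipate is the sparse domination step in the multi-parameter setting: unlike the one-parameter case, there is no pointwise sparse bound over rectangles, and one must instead work with the correct substitute — either iterating one-parameter sparse arguments parameter by parameter while carefully tracking the Journé-type covering, or proving a weighted bound for the model operators directly via a multi-parameter stopping-time / Bellman-type argument — and then checking that the resulting weight arithmetic still only uses the \emph{joint} $A_{\vec p}$ condition and not individual $A_{p_i}$ control on the $w_i$.
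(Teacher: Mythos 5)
Your outer framing is correct and matches the paper: reduce to dyadic model operators via the $n$-linear $m$-parameter representation theorem, prove a weighted bound for a single tuple of exponents with polynomial complexity dependence, and obtain the full range including $p_i=\infty$ by multilinear extrapolation for $A_{\vec p}$ classes. But the core step you propose is precisely what the paper states is unavailable, and this is where the proof breaks down.

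You propose a sparse (or maximal-function) domination
$|\langle S(f_1,\ldots,f_n),g\rangle|\lesssim\sum_{R\in\mathcal S}|R|\prod_i\langle|f_i|\rangle_R\langle|g|\rangle_R$
or $|S(f_1,\ldots,f_n)|\lesssim M_{\mathcal D}(f_1,\ldots,f_n)$ for the dyadic model operators. Neither of these is known in the multi-parameter setting, and the paper says so explicitly: ``there is no known general principle which would automatically imply the corresponding weighted estimate for multi-parameter SIOs from the maximal function estimate. In particular, no estimate like \eqref{eq:CF} is known,'' and again: ``the modern one-parameter tools (such as sparse domination in the multilinear setting) are missing.'' The hedge you offer (``iterating parameter by parameter while tracking a Journ\'e-type covering'' or ``a Bellman-type argument'') does not resolve this; it is exactly the obstruction, not a hint of how to bypass it. Your proposal therefore has a genuine gap at its central step, and the rest of the argument cannot proceed.

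What the paper actually does is quite different. Instead of dominating model operators by a maximal function, it introduces a family of genuinely $n$-linear, $m$-parameter \emph{square function} operators $A_{1,k}$, $A_{2,k}$, $A_{3,k}$ built from martingale difference blocks (Section~\ref{sec:SquareFunctions}), proves that each dyadic model operator satisfies $\|S(f_1,\ldots,f_n)w\|_{L^p}\lesssim\|A_k(f_1,\ldots,f_n)w\|_{L^p}$ in a duality form, and then proves the $A_{\vec p}$-weighted bounds for the $A_{j,k}$ directly (Theorem~\ref{thm:thm3}). This last step is the heart of the matter: it is carried out by dualizing with $f_{n+1}$, applying the \emph{lower} square function estimate $\|g\|_{L^q(w)}\lesssim\|S_{\mathcal D}g\|_{L^q(w)}$ valid for $A_\infty$ weights (Lemma~\ref{lem:lem3}) with weights such as $w_i^{-p_i'}\in A_\infty$ coming from Lemma~\ref{lem:lem1}, then iterating until all martingale cancellations are consumed, at which point one is left with a vector-valued multilinear maximal function bound (Propositions~\ref{prop:prop2} and~\ref{prop:vecvalmax}). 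Crucially, the argument is never decoupled into separate estimates for individual $Mf_i$ or $Sf_i$ against individual weights $w_i^{p_i}$, since those would require $w_i^{p_i}\in A_{p_i}$, which does not follow from the joint $A_{\vec p}$ condition. The partial paraproducts require an additional, more elaborate argument (Theorem~\ref{thm:thm4}) and produce an $\varepsilon$-exponential complexity dependence, which is still absorbed by the representation theorem when $\omega_i(t)=t^{\alpha_i}$.
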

\noindent For the exact definitions, see the main text.

Recent extrapolation methods are crucial both for the proof and for the applications.
The extrapolation theorem of Rubio de Francia says that if $\|g\|_{L^{p_0}(w)} \lesssim \|f\|_{L^{p_0}(w)}$ for some $p_0 \in (1,\infty)$ and all $w \in A_{p_0}$, then $\|g\|_{L^{p}(w)} \lesssim \|f\|_{L^{p}(w)}$ for all $p \in (1,\infty)$ and all $w \in A_{p}$.
In \cite{GM} (see also \cite{DU}) a multivariable analogue was developed in the setting $w_i^{p_i} \in A_{p_i}$, $i = 1, \ldots, n$. Such extrapolation results are already of fundamental use in proving other estimates -- often just to even deduce the full $n$-linear range of unweighted estimates $\prod_{j=1}^n L^{p_j}\to L^p$, $\sum_j 1/p_j = 1/p$, $1 < p_j < \infty$, $1/n <p < \infty$,
from some particular single tuple $(p_1, \ldots, p_n, p)$. Indeed, reaching $p \le 1$ can often be a crucial challenge, particularly so in multi-parameter settings where
many other tools are completely missing.

Very recently, in \cite{LMO} it was shown that also the genuinely multilinear weighted estimates can be extrapolated. In the subsequent paper \cite{LMMOV} (see also \cite{Nieraeth}) a key advantage
of extrapolating using the general weight classes was identified: it is possible to both start the extrapolation and, importantly, to reach -- as a consequence of the extrapolation -- weighted
estimates with $p_i = \infty$. See Theorem \ref{thm:ext} for a formulation of these general extrapolation principles. Moreover, extrapolation
is flexible in the sense that one can extrapolate both $1$-parameter and $m$-parameter, $m\ge 2$, weighted estimates.

These new extrapolation results are extremely useful e.g. in proving mixed-norm estimates -- for example, in the bi-parameter case they yield that
\[
\| T(f_1,\ldots, f_n)\|_{L^p(\R^{d_1}; L^q(\R^{d_2}))}\lesssim \prod_{i=1}^n \| f_i\|_{L^{p_i}(\R^{d_1}; L^{q_i}(\R^{d_2}))},
\]
where $1<p_i, q_i\le \infty$, $\frac 1p= \sum_i \frac{1}{p_i} >0$ and $\frac 1q=\sum_i \frac{1}{q_i} >0$. The point is that even all of the various cases involving $\infty$ become immediate. See e.g. \cite{DO, LMMOV, LMV} for some of the previous mixed-norm estimates.
Compared to \cite{LMMOV} we can work with completely general $n$-linear $m$-parameter SIOs instead of bi-parameter tensor products of $1$-parameter SIOs, and the
proof is much simplified due to the optimal weighted estimates, Theorem \ref{thm:intro1}.

We also use extrapolation to give a new short proof of the boundedness of the multi-parameter $n$-linear maximal function \cite{GLTP} -- see Proposition \ref{prop:prop2}. 

On the technical level there is no existing approach to our result: the modern one-parameter tools (such as sparse domination in the multilinear setting, see e.g. \cite{CUDPOU})
are missing and many of the bi-parameter methods \cite{LMV} used in conjunction with the assumption that each weight individually satisfies $w_i^{p_i} \in A_{p_i}$ are of little use.
Aside from maximal function estimates, multi-parameter estimates require various square function estimates (and combinations of maximal function and square function estimates). Similarly as
one cannot use $\prod_i Mf_i$ instead of $M(f_1, \ldots, f_n)$ due to the nature of the multilinear weights,
it is also not possible to use classical square function estimates separately for the functions $f_i$. Now, this interplay
makes it impossible to decouple estimates to terms like $\|M f_1 \cdot w_1\|_{L^{p_1}} \|S f_2 \cdot w_2\|_{L^{p_2}}$, since
neither of them would be bounded separately as $w_1^{p_1} \not \in A_{p_1}$ and $w_2^{p_2} \not \in A_{p_2}$. However,
such decoupling of estimates has previously seemed almost indispensable. 

Our proof starts with the reduction to dyadic model operators \cite{AMV} (see also \cite{DLMV2, Hy1, LMOV, Ma1, Ou}), which
is a standard idea. After this we introduce a family of $n$-linear multi-parameter square function type objects $A_k$. On the idea level, a big part of the proof works by taking a dyadic model operator $S$
and finding an appropriate square function $A_k$ so that
$$
\|S(f_1, \ldots, f_n)w \|_{L^p} \lesssim \|A_k(f_1, \ldots, f_n)w \|_{L^p}.
$$
This requires different tools depending on the model operator in question and is a new way to estimate model operators that respects the $n$-linear structure fully.
We then prove that all of our operators $A_k$ satisfy the genuinely $n$-linear weighted estimates
$$
\|A_{k}(f_1, \ldots, f_n)w \|_{L^p} \lesssim \prod_{i=1}^n \|f_i w_i\|_{L^{p_i}}.
$$
This is done with an argument that is based on using duality and lower square function estimates iteratively until all of the cancellation present
in these square functions has been exploited.

Aside from the full range of mixed-norm estimates, the weighted estimates immediately give other applications as well.
We present here a result on commutators, which greatly generalises \cite{LMV}.
Commutator estimates appear all over analysis implying e.g.
factorizations for Hardy functions \cite{CRW}, certain div-curl lemmas relevant in compensated compactness, and were recently connected to
the Jacobian problem $Ju = f$ in $L^p$ (see \cite{HyCom}). For a small sample of commutator estimates in various other key setting see e.g.
\cite{FL,  HLW, HPW, LOR1}.
\begin{thm}
Suppose $T$ is an $n$-linear $m$-parameter CZO in $\R^d = \prod_{i=1}^m \R^{d_i}$,
$1 < p_1, \ldots, p_n \le \infty$ and $1/p = \sum_{i=1}^n 1/p_i> 0$.
Suppose also that $\|b\|_{\operatorname{bmo}} = \sup_R \frac{1}{|R|}
\int_R |b-\ave{b}_R| < \infty$.
Then for all $1\le k\le n$ we have the commutator estimate
\begin{equation*}
\begin{split}
\| [b, T]_k(f_1,\ldots, f_n) w\|_{L^p} &\lesssim \|b\|_{\operatorname{bmo}} \prod_{i=1}^n \|f_iw_i\|_{L^{p_i}}, \\
[b, T]_k(f_1,\ldots, f_n) &:= bT(f_1, \ldots, f_n) - T(f_1, \ldots, f_{k-1}, bf_k, f_{k+1}, \ldots, f_n),
\end{split}
\end{equation*}
for all $n$-linear $m$-parameter weights $\vec w = (w_1, \ldots, w_n) \in A_{\vec p}$.
Analogous results hold for iterated commutators.
\end{thm}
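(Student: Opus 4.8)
The plan is to reduce the commutator bound to the already-established multilinear weighted estimate for the operator itself (Theorem~\ref{thm:intro1}) by a Cauchy integral argument in the spirit of Coifman--Rochberg--Weiss. First I would fix the CZO $T$, the exponents $1 < p_1,\ldots,p_n \le \infty$ with $1/p = \sum_i 1/p_i > 0$, a symbol $b$ with $\|b\|_{\operatorname{bmo}} < \infty$, and an index $k$. Replacing $b$ by $b/\|b\|_{\operatorname{bmo}}$ we may assume $\|b\|_{\operatorname{bmo}} = 1$. The key point is that for $z \in \C$ and a suitable conjugated operator
\[
T_z(f_1,\ldots,f_n) := e^{zb}\, T\big(f_1,\ldots,f_{k-1}, e^{-zb} f_k, f_{k+1},\ldots,f_n\big),
\]
we have the contour representation
\[
[b,T]_k(f_1,\ldots,f_n) = \frac{1}{2\pi \img} \int_{|z| = \eps} \frac{T_z(f_1,\ldots,f_n)}{z^2}\ud z,
\]
obtained by differentiating $z \mapsto T_z$ at $z = 0$ and using that $\partial_z T_z|_{z=0} = [b,T]_k$. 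By Minkowski's integral inequality applied to the $L^p(w)$-norm along the circle $|z| = \eps$, it suffices to bound $\|T_z(f_1,\ldots,f_n) w\|_{L^p}$ uniformly for $|z| = \eps$ and then optimize in $\eps$.

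Next I would show that $T_z$ is again an $n$-linear $m$-parameter CZO with kernel and $T1$-type constants controlled by $C\, e^{C\eps}$ whenever $|z| = \eps$; this is the technical heart and reduces to the fact that the multiplication weights $e^{\pm \Re(z) b}$ are, for $\eps$ small enough depending only on dimension, $A_2$ (in every parameter) reverse-H\"older weights with uniformly bounded constants, by the John--Nirenberg inequality for $\operatorname{bmo}$ in the product setting. Concretely, the kernel estimates and the full/partial cancellation conditions defining a CZO are all of the form "an integral average of the kernel against test functions is $\le C$", and conjugating by $e^{\pm zb}$ multiplies the relevant integrand by $e^{z(b(x) - b(y))}$; controlling $e^{\Re z\,(b(x)-b(y))}$ on cubes/rectangles via John--Nirenberg gives back the same structural estimates with constant $\lesssim e^{C\eps}$. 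Thus Theorem~\ref{thm:intro1} applies to $T_z$, but with an implied constant that a priori depends on $T_z$; what one actually needs is that this constant depends on $T_z$ only through its CZO data, i.e. that the bound in Theorem~\ref{thm:intro1} is of the form $C(d,\vec p, [\vec w]_{A_{\vec p}}) \cdot \|T\|_{\mathrm{CZO}}$ with $\|T\|_{\mathrm{CZO}}$ the sum of the kernel and testing constants. This quantitative dependence is visible from the dyadic-representation proof of Theorem~\ref{thm:intro1} (the model operators come with constants linear in the CZO data), so it may be invoked; I would state it as a remark accompanying Theorem~\ref{thm:intro1}.

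Granting the quantitative form, we obtain $\|T_z(f_1,\ldots,f_n) w\|_{L^p} \lesssim e^{C\eps} \prod_{i=1}^n \|f_i w_i\|_{L^{p_i}}$ uniformly over $|z| = \eps$, and hence
\[
\|[b,T]_k(f_1,\ldots,f_n) w\|_{L^p} \le \frac{1}{2\pi}\int_{|z| = \eps} \frac{\|T_z(f_1,\ldots,f_n) w\|_{L^p}}{\eps^2}\,|\ud z| \lesssim \frac{e^{C\eps}}{\eps} \prod_{i=1}^n \|f_i w_i\|_{L^{p_i}};
\]
choosing $\eps$ a small absolute constant gives the claimed estimate (with the factor $\|b\|_{\operatorname{bmo}}$ restored by undoing the normalization). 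For iterated commutators $[b_1,[b_2,\cdots[b_\ell,T]_{k_\ell}\cdots]_{k_2}]_{k_1}$ I would iterate the same device: conjugating successively by $e^{z_1 b_1},\ldots,e^{z_\ell b_\ell}$ produces an operator $T_{z_1,\ldots,z_\ell}$ which is still a CZO with data $\lesssim e^{C(\eps_1 + \cdots + \eps_\ell)}$, and the $\ell$-fold Cauchy integral over $|z_1| = \cdots = |z_\ell| = \eps$ picks out the iterated commutator; Minkowski and the quantitative Theorem~\ref{thm:intro1} then close the estimate with an extra factor $\prod_j \|b_j\|_{\operatorname{bmo}}$. The main obstacle is the second step: verifying that conjugation by $e^{\pm zb}$ preserves the (genuinely multilinear, multi-parameter) CZO structure with exponentially controlled constants, which requires a careful product-space John--Nirenberg argument applied rectangle-by-rectangle to each of the kernel size bounds and the various full and partial paraproduct testing conditions — the bookkeeping is heavier than in the one-parameter case precisely because of the mixed full/partial cancellation conditions characteristic of the multi-parameter theory.
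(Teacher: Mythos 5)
The overall scaffolding — the Cauchy integral representation $[b,T]_k = -\frac{1}{2\pi i}\int_{|z|=\delta} z^{-2} T_z\,dz$ followed by Minkowski's inequality — is the same as the paper's. But the way you propose to bound $\|T_z w\|_{L^p}$ has a genuine gap, and the gap is not mere bookkeeping.

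You route through the claim that $T_z = e^{zb}T(\cdot,\dots,e^{-zb}f_k,\dots,\cdot)$ is again an $n$-linear $m$-parameter CZO with data $\lesssim e^{C\eps}$, so that Theorem~\ref{thm:intro1} can be reapplied to $T_z$ with the original weight tuple. This step almost certainly fails as stated. The kernel of $T_z$ is $e^{zb(x)}K(x,y_1,\dots,y_n)e^{-zb(y_k)}$, and since $b$ is only in $\operatorname{bmo}$, the factor $e^{\Re z\,(b(x)-b(y_k))}$ is not pointwise bounded; so the size bound $|K_z|\lesssim \prod_m (\sum_j|x^m-y_j^m|)^{-d_m n}$ does not survive conjugation, nor do the H\"older/continuity bounds. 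John--Nirenberg gives control of $e^{\Re z\,b}$ as an $A_2$ or reverse-H\"older weight in an averaged sense, not the pointwise control the kernel conditions require. Worse, the testing conditions that enter the definition of a multi-parameter CZO (weak boundedness, diagonal BMO, and the $(n+1)^m$ product BMO conditions on $T^{j_1*,\dots*}(1,\dots,1)$) are not testable against $1$ for $T_z$: e.g.\ $T_z(1,\dots,1)=e^{zb}\,T(1,\dots,e^{-zb},\dots,1)$, which is not controlled by the hypotheses on $T$. So ``$T_z$ is again a CZO with controlled data'' is not something one can hope to verify, and without it your choice of an absolute-constant radius $\eps$ has no justification.

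The paper avoids this entirely by \emph{not} trying to make $T_z$ a CZO. It observes the elementary identity
\[
\|T_z(f_1,\dots,f_n)\,w\|_{L^p}
= \big\| T\big(e^{-zb}f_1,f_2,\dots,f_n\big)\, w\, e^{\Re(bz)}\big\|_{L^p},
\]
and then absorbs $e^{\Re(bz)}$ into the \emph{weight tuple}: applying the already-proved genuinely multilinear weighted estimate for $T$ with the perturbed tuple $\vec w_{b,z}=(w_1 e^{\Re(bz)},w_2,\dots,w_n)$ gives
\[
\big\| T\big(e^{-zb}f_1,f_2,\dots,f_n\big)\, w\, e^{\Re(bz)}\big\|_{L^p}
\lesssim \big\|\,|e^{-zb}f_1|\,w_1 e^{\Re(bz)}\big\|_{L^{p_1}}\prod_{i\ge2}\|f_i w_i\|_{L^{p_i}}
= \prod_{i=1}^n\|f_iw_i\|_{L^{p_i}}.
\]
The only thing that must be checked is that $\vec w_{b,z}\in A_{\vec p}$ with controlled constant for $|z|$ small — and that is exactly Proposition~\ref{prop:dhl} (extended to product space via \eqref{eq:eq28} and \eqref{e:litbmo}). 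This makes the argument use Theorem~\ref{thm:intro1} entirely as a black box: the paper explicitly notes that ``the form of the $n$-linear operator plays no role.'' The price is that the admissible radius is not an absolute constant; it must be taken $\delta\sim\big(\max([w^p]_{A_\infty},\max_i[w_i^{-p_i'}]_{A_\infty})\big)^{-1}$, which is harmless for the qualitative claim. In short: your Cauchy-integral shell is right, but the heart of the matter is to move the exponentials from the operator into the weight class, not to reconstitute a CZO — and the latter, as you set it up, is both unverified and, in the genuinely multilinear multi-parameter setting, not achievable.
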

We note that we can also finally dispose of some of the sparse domination tools that restricted some of the theory of \cite{LMV} to bi-parameter.

\subsection*{Acknowledgements}
K. Li was supported by the National Natural Science Foundation of China through project number 12001400.
H. Martikainen and E. Vuorinen were supported by the Academy of Finland through project numbers 294840 (Martikainen) and 327271 (Martikainen, Vuorinen), and by the three-year research grant 75160010 of the University of Helsinki. 

The authors thank the anonymous referee for careful reading of the paper and for suggestions which improve the readability of the paper.

\section{Preliminaries}\label{sec:def}

Throughout this paper $A\lesssim B$ means that $A\le CB$ with some constant $C$ that we deem unimportant to track at that point.
We write $A\sim B$ if $A\lesssim B\lesssim A$. Sometimes we  e.g. write $A \lesssim_{\epsilon} B$ if we want to make the point that $A \le C(\epsilon) B$.

\subsection{Dyadic notation}
Given a dyadic grid $\calD$ in $\R^d$, $I \in \calD$ and $k \in \Z$, $k \ge 0$, we use the following notation:
\begin{enumerate}
\item $\ell(I)$ is the side length of $I$.
\item $I^{(k)} \in \calD$ is the $k$th parent of $I$, i.e., $I \subset I^{(k)}$ and $\ell(I^{(k)}) = 2^k \ell(I)$.
\item $\ch(I)$ is the collection of the children of $I$, i.e., $\ch(I) = \{J \in \calD \colon J^{(1)} = I\}$.
\item $E_I f=\langle f \rangle_I 1_I$ is the averaging operator, where $\langle f \rangle_I = \fint_{I} f = \frac{1}{|I|} \int _I f$.
\item $\Delta_If$ is the martingale difference $\Delta_I f= \sum_{J \in \ch (I)} E_{J} f - E_{I} f$.
\item $\Delta_{I,k} f$ is the martingale difference block
$$
\Delta_{I,k} f=\sum_{\substack{J \in \calD \\ J^{(k)}=I}} \Delta_{J} f.
$$
\end{enumerate}

For an interval $J \subset \R$ we denote by $J_{l}$ and $J_{r}$ the left and right
halves of $J$, respectively. We define $h_{J}^0 = |J|^{-1/2}1_{J}$ and $h_{J}^1 = |J|^{-1/2}(1_{J_{l}} - 1_{J_{r}})$.
Let now $I = I_1 \times \cdots \times I_d \subset \R^d$ be a cube, and define the Haar function $h_I^{\eta}$, $\eta = (\eta_1, \ldots, \eta_d) \in \{0,1\}^d$, by setting
\begin{displaymath}
h_I^{\eta} = h_{I_1}^{\eta_1} \otimes \cdots \otimes h_{I_d}^{\eta_d}.
\end{displaymath}
If $\eta \ne 0$ the Haar function is cancellative: $\int h_I^{\eta} = 0$. We exploit notation by suppressing the presence of $\eta$, and write $h_I$ for some $h_I^{\eta}$, $\eta \ne 0$. Notice that for $I \in \calD$ we have $\Delta_I f = \langle f, h_I \rangle h_I$ (where the finite $\eta$ summation is suppressed), $\langle f, h_I\rangle := \int fh_I$.

We make a few clarifying comments related to the use of Haar functions. In the model operators coming from the representation theorem
there are Haar functions involved. There we use the just mentioned convention that $h_I$ means some unspecified cancellative Haar function $h_I^{\eta}$ which we
do not specify. On the other hand, the square function estimates in Section \ref{sec:SquareFunctions} are formulated using martingale differences
(which involve multiple Haar functions as $\Delta_I f = \sum_{\eta \ne 0} \langle f, h_I^{\eta} \rangle h_I^{\eta}$).
When we estimate the model operators, we carefully consider this difference by
passing from the Haar functions into martingale differences via the simple identity
\begin{equation}\label{eq:HaarMart}
\langle f, h_I \rangle=\langle \Delta_I f, h_I \rangle,
\end{equation} 
which follows from $\Delta_I f=\sum_{\eta \not=0} \langle f, h^{\eta}_I \rangle h^{\eta}_I$
and orthogonality. 

\subsection{Multi-parameter notation}
We will be working on the $m$-parameter product space $\R^d = \prod_{i=1}^m \R^{d_i}$.
We denote a general dyadic grid in $\R^{d_i}$ by $\calD^i$. We denote cubes in $\calD^i$ by $I^i, J^i, K^i$, etc.
Thus, our dyadic rectangles take the forms $\prod_{i=1}^m I^i$, $\prod_{i=1}^m J^i$, $\prod_{i=1}^m K^i$ etc. We usually
denote the collection of dyadic rectangles by $\calD = \prod_{i=1}^m \calD^i$.

If $A$ is an operator acting on $\R^{d_1}$, we can always let it act on the product space $\R^d$ by setting $A^1f(x) = A(f(\cdot, x_2, \ldots, x_n))(x_1)$. Similarly, we use the notation $A^i f$ if $A$ is originally an operator acting on $\R^{d_i}$. Our basic multi-parameter dyadic operators -- martingale differences and averaging operators -- are obtained by simply chaining together relevant one-parameter operators. For instance, an $m$-parameter martingale difference is
$$
\Delta_R f = \Delta_{I^1}^1 \cdots \Delta_{I^m}^m f, \qquad R = \prod_{i=1}^m I^i.
$$
When we integrate with respect to only one of the parameters we may e.g. write
\[
\langle f, h_{I^1} \rangle_1(x_2, \ldots, x_n):=\int_{\R^{d_1}} f(x_1, \ldots, x_n)h_{I^1}(x_1) \ud x_1
\]
or
$$
\langle f \rangle_{I^1, 1}(x_2, \ldots, x_n) := \fint_{I^1} f(x_1, \ldots, x_n) \ud x_1.
$$

\subsection{Adjoints}\label{sec:adjoints}
Consider an $n$-linear operator $T$ on $\R^d = \R^{d_1} \times \R^{d_2}$. Let $f_j = f_j^1 \otimes f_j^2$, $j = 1, \ldots, n+1$.
We set up notation for the adjoints of $T$ in the bi-parameter situation. We let $T^{j*}$, $j \in \{0, \ldots, n\}$, denote the full adjoints, i.e., 
$T^{0*} = T$ and otherwise
$$
\langle T(f_1, \dots, f_n), f_{n+1} \rangle
= \langle T^{j*}(f_1, \dots, f_{j-1}, f_{n+1}, f_{j+1}, \dots, f_n), f_j \rangle.
$$
A subscript $1$ or $2$ denotes a partial adjoint in the given parameter -- for example, we define
$$
\langle T(f_1, \dots, f_n), f_{n+1} \rangle
= \langle T^{j*}_1(f_1, \dots, f_{j-1}, f_{n+1}^1 \otimes f_j^2, f_{j+1}, \dots, f_n), f_j^1 \otimes f_{n+1}^2 \rangle.
$$
Finally, we can take partial adjoints with respect to different parameters in different slots also -- in that case we denote the adjoint by $T^{j_1*, j_2*}_{1,2}$. It simply interchanges
the functions $f_{j_1}^1$ and $f_{n+1}^1$ and the functions $f_{j_2}^2$ and $f_{n+1}^2$. Of course, we e.g. have $T^{j*, j*}_{1,2} = T^{j*}$ and $T^{0*, j*}_{1,2} = T^{j*}_{2}$,
so everything can be obtained, if desired, with the most general notation $T^{j_1*, j_2*}_{1,2}$.
In any case, there are $(n+1)^2$ adjoints (including $T$ itself). These notions have obvious extensions to $m$-parameters.

\subsection{Structure of the paper}
To avoid unnecessarily complicating the notation, we start by proving everything in the bi-parameter case $m=2$. Importantly, we present a proof which does not exploit this in a way that would not be extendable to $m$-parameters (e.g., our proof for the partial paraproducts does not exploit sparse domination for the
appearing one-parameter paraproducts). At the end, we demonstrate for some key model operators how the general case can be dealt with.

\section{Weights}\label{sec:weights}
The following notions have an obvious extension to $m$-parameters.
A weight $w(x_1, x_2)$ (i.e. a locally integrable a.e. positive function) belongs to the bi-parameter weight class $A_p = A_p(\R^{d_1} \times \R^{d_2})$, $1 < p < \infty$, if
$$
[w]_{A_p} := \sup_{R}\, \ave{w}_R  \ave{w^{1-p'}}^{p-1}_R
=  \sup_{R}\, \ave{w}_R  \ave{w^{-\frac{1}{p-1}}}^{p-1}_R
 < \infty,
$$
where the supremum is taken over rectangles $R$ -- that is, over $R = I^1 \times I^2$ where $I^i \subset \R^{d_i}$ is a cube. Thus, this is the one-parameter definition but cubes are
replaced by rectangles.

We have
\begin{equation}\label{eq:eq28}
[w]_{A_p(\R^{d_1} \times \R^{d_2})} < \infty \textup { iff } \max\big( \esssup_{x_1 \in \R^{d_1}} \,[w(x_1, \cdot)]_{A_p(\R^{d_2})}, \esssup_{x_2 \in \R^{d_2}}\, [w(\cdot, x_2)]_{A_p(\R^{d_1})} \big) < \infty,
\end{equation}
and that $$
\max\big( \esssup_{x_1 \in \R^{d_1}} \,[w(x_1, \cdot)]_{A_p(\R^{d_2})}, \esssup_{x_2 \in \R^{d_2}}\, [w(\cdot, x_2)]_{A_p(\R^{d_1})} \big) \le [w]_{A_p(\R^{d_1}\times \R^{d_2})},
$$
while the constant $[w]_{A_p}$ is dominated by the maximum to some power.
It is also useful that $\ave{w}_{I^2,2} \in A_p(\R^{d_1})$ uniformly on the cube $I^2 \subset \R^{d_2}$.
 For basic bi-parameter weighted theory see e.g. \cite{HPW}.
We say $w\in A_\infty(\R^{d_1}\times \R^{d_2})$ if
\[
[w]_{A_\infty}:=\sup_R \, \ave{w}_R \exp\big( \ave{\log w^{-1}}_R  \big)<\infty.
\]
It is well-known that
$$A_\infty(\R^{d_1}\times \R^{d_2})=\bigcup_{1<p<\infty}A_p(\R^{d_1}\times \R^{d_2}).$$
We also define
$$
[w]_{A_1} = \sup_R \, \ave{w}_R \esssup_R w^{-1}.
$$

We introduce the classes of multilinear Muckenhoupt weights that we will use.
\begin{defn}\label{defn:defn1}
Given $\vec p=(p_1, \ldots,  p_n)$ with $1 \le p_1, \ldots, p_n \le \infty$ we say that 
$\vec{w}=(w_1, \ldots, w_n)\in A_{\vec p} = A_{\vec p}(\R^{d_1} \times \R^{d_2})$, if
$$
0<w_i <\infty, \qquad i = 1, \ldots, n,
$$
almost everywhere and
$$
[\vec{w}]_{A_{\vec p}}
:=\sup_R \, \ave{w^p}_R^{\frac 1 p} \prod_{i=1}^n \ave{w_i^{-p_i'}}_R^{\frac 1{p_i'}} < \infty,
$$
where the supremum is over rectangles $R$,
$$
w := \prod_{i=1}^n w_i \qquad \textup{and} \qquad \frac 1 p = \sum_{i=1}^n \frac 1 {p_i}.
$$
If $p_i = 1$ we interpret $\ave{w_i^{-p_i'}}_R^{\frac 1{p_i'}}$ as
$\esssup_R w_i^{-1}$, and if $p = \infty$ we interpret 
$\ave{w^p}_R^{\frac 1 p}$ as $\esssup_R w$.
\end{defn}
\begin{rem}
\begin{enumerate}
\item It is important that the lower bound
\begin{equation}\label{eq:eq7}
 \ave{w^p}_R^{\frac 1 p} \prod_{i=1}^n \ave{w_i^{-p_i'}}_R^{\frac 1{p_i'}} \ge 1
\end{equation}
holds always. To see this recall that for $\alpha_1, \alpha_2 > 0$ we have by H\"older's inequality that
\begin{equation}\label{eq:eq8}
\begin{split}
1 \le \ave{ w^{-\alpha_1}}_R^{\frac{1}{\alpha_1}}  \ave{ w^{\alpha_2}}_R^{\frac{1}{\alpha_2}}.
\end{split}
\end{equation}
Apply this with $\alpha_2 = p$ and $\alpha_1 = \frac{1}{n-\frac{1}{p}}$. Then apply H\"older's inequality with the exponents
$u_i = \Big(n-\frac{1}{p}\Big)p_i'$ to get $\Big \langle \Big( \prod_{i=1}^n w_i \Big)^{-\frac{1}{n-\frac{1}{p}}} \Big \rangle_R^{n-\frac{1}{p}} \le \prod_{i=1}^n \ave{ w_i^{-p_i'}}_R^{\frac{1}{p_i'}}$.
\item Our definition is essentially the usual one-parameter definition \cite{LOPTT} with the difference that cubes are replaced by rectangles. However, we are also using the renormalised definition
from \cite{LMMOV} that works better with the exponents $p_i = \infty$. Compared to the usual formulation of \cite{LOPTT} the relation is that
$[w_1^{p_1}, \ldots, w_n^{p_n}]_{A_{\vec p}}^{\frac 1p}$ with $A_{\vec p}$ defined as in \cite{LOPTT} agrees with our $[\vec w]_{A_{\vec p}}$ when $p_i < \infty$.
\item The case $p_1 = \cdots = p_n = \infty = p$ can be used as the starting point of extrapolation. This is rarely useful but we will find use for it when we consider the multilinear
maximal function.
\end{enumerate}
\end{rem}

The following characterization of the class $ A_{\vec p}$ is convenient. The one-parameter result with the different normalization is \cite[Theorem 3.6]{LOPTT}. We record the proof
for the convenience of the reader.
\begin{lem}\label{lem:lem1}
Let $\vec p=(p_1, \ldots,  p_n)$ with $1 \le p_1, \ldots, p_n \le \infty$, $1/p = \sum_{i=1}^n 1/p_i \ge 0$,
$\vec{w}=(w_1, \ldots, w_n)$ and $w = \prod_{i=1}^n w_i$.
We have
$$
[w_i^{-p_i'}]_{A_{np_i'}} \le [\vec{w}]_{A_{\vec p}}^{p_i'}, \qquad i = 1, \ldots, n,
$$
and
$$
[w^p]_{A_{np}} \le  [\vec{w}]_{A_{\vec p}}^{p}.
$$
In the case $p_i = 1$ the estimate is interpreted as $[w_i^{\frac 1n}]_{A_1} \le [\vec{w}]_{A_{\vec p}}^{1/n}$, and in the case $p=\infty$
we have $[w^{-\frac{1}{n}}]_{A_1} \le [\vec{w}]_{A_{\vec p}}^{1/n}$.

Conversely, we have
$$
 [\vec{w}]_{A_{\vec p}} \le [w^p]_{A_{np}}^{\frac{1}{p}} \prod_{i=1}^n [w_i^{-p_i'}]_{A_{np_i'}}^{\frac{1}{p_i'}}.
$$
\end{lem}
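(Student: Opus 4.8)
The plan is to prove each inequality by a direct computation, using only H\"older's inequality for the forward implications and Jensen's inequality for the converse; the entire content is the bookkeeping of exponents, organised around the identity $n - \tfrac 1p = \sum_{i=1}^n \tfrac{1}{p_i'}$ and its truncations $\tfrac 1p + \sum_{j\neq i}\tfrac{1}{p_j'} = n - \tfrac{1}{p_i'}$.

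First I would establish $[w^p]_{A_{np}} \le [\vec w]_{A_{\vec p}}^p$. Fixing a rectangle $R$, the only nontrivial term in $\ave{w^p}_R \ave{w^{-p/(np-1)}}_R^{np-1}$ is the second one. Since $w^{-p/(np-1)} = \prod_i w_i^{-p/(np-1)}$ and $\sum_i \tfrac{p}{(np-1)p_i'} = 1$, H\"older with exponents $u_i = (np-1)p_i'/p$ gives $\ave{w^{-p/(np-1)}}_R^{np-1} \le \prod_i \ave{w_i^{-p_i'}}_R^{p/p_i'}$, and multiplying by $\ave{w^p}_R$ yields precisely $\big(\ave{w^p}_R^{1/p}\prod_i \ave{w_i^{-p_i'}}_R^{1/p_i'}\big)^p \le [\vec w]_{A_{\vec p}}^p$. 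The bound $[w_i^{-p_i'}]_{A_{np_i'}} \le [\vec w]_{A_{\vec p}}^{p_i'}$ is handled the same way: after flipping, one must estimate $\ave{w_i^{p_i'/(np_i'-1)}}_R$, which one does by writing $w_i = w\prod_{j\neq i} w_j^{-1}$ and applying H\"older with the large exponent attached to $w$ (producing $\ave{w^p}_R$) and the remaining ones to the $w_j$, $j \neq i$ (producing $\ave{w_j^{-p_j'}}_R$); the exponents sum to $1$ by the truncated identity above.

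For the converse, fix $R$ and use the two reformulations in reverse: the $A_{np}$ condition gives $\ave{w^p}_R^{1/p} \le [w^p]_{A_{np}}^{1/p}\,\ave{w^{-p/(np-1)}}_R^{-(np-1)/p}$, and the $A_{np_i'}$ condition gives $\ave{w_i^{-p_i'}}_R^{1/p_i'} \le [w_i^{-p_i'}]_{A_{np_i'}}^{1/p_i'}\,\ave{w_i^{p_i'/(np_i'-1)}}_R^{-(np_i'-1)/p_i'}$. Multiplying all of these, it remains only to check $\ave{w^{-p/(np-1)}}_R^{(np-1)/p}\prod_i \ave{w_i^{p_i'/(np_i'-1)}}_R^{(np_i'-1)/p_i'} \ge 1$. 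Here the outer exponent is the reciprocal of the inner one in each factor, so Jensen's inequality bounds each factor below by the exponential of the corresponding logarithmic average, and the total exponent telescopes to $-\ave{\log w}_R + \sum_i \ave{\log w_i}_R = 0$ since $w = \prod_i w_i$; this gives the bound $\ge 1$, hence $[\vec w]_{A_{\vec p}} \le [w^p]_{A_{np}}^{1/p}\prod_i [w_i^{-p_i'}]_{A_{np_i'}}^{1/p_i'}$.

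Finally I would dispatch the endpoints. When some $p_i = 1$, the factor $\ave{w_i^{-p_i'}}_R^{1/p_i'}$ reads as $\esssup_R w_i^{-1}$ and the forward conclusion becomes $[w_i^{1/n}]_{A_1} \le [\vec w]_{A_{\vec p}}^{1/n}$; when $p = \infty$ (which forces all $p_i = \infty$), $\ave{w^p}_R^{1/p}$ reads as $\esssup_R w$ and the forward conclusion becomes $[w^{-1/n}]_{A_1} \le [\vec w]_{A_{\vec p}}^{1/n}$. Both follow by running exactly the H\"older computation of the second paragraph with $n$ equal exponents (so that $\sum \tfrac 1n = 1$) together with the evident interpretation of the degenerate terms. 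The main obstacle is simply not losing track of the exponents: verifying each time that the H\"older exponents one wants are admissible, and that the inner and outer exponents in the converse are genuine reciprocals so that Jensen applies; once the basic identities are fixed the rest is mechanical.
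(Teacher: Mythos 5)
Your proof is correct and follows the same overall structure as the paper: both forward inequalities are established by expanding the respective $A_{np}$ and $A_{np_i'}$ constants, writing $w^{-p/(np-1)} = \prod_j w_j^{-p/(np-1)}$ or $w_i = w\prod_{j\ne i} w_j^{-1}$, and applying H\"older with exponents chosen so that everything reassembles into $[\vec w]_{A_{\vec p}}$; the converse is reduced, exactly as in the paper, to the auxiliary lower bound that is equation (3.6) in the text, namely
$\ave{w^{-p/(np-1)}}_R^{(np-1)/p}\prod_i \ave{w_i^{p_i'/(np_i'-1)}}_R^{(np_i'-1)/p_i'}\ge 1$.
The one place where your argument genuinely diverges is in how that auxiliary inequality gets proved. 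The paper does it in two H\"older steps: it first uses (3.5) to write $1\le\ave{w^{-\alpha_1}}_R^{1/\alpha_1}\ave{w^{\alpha_2}}_R^{1/\alpha_2}$ with $\alpha_1 = p/(np-1)$, $\alpha_2 = \big(n(n-1)+\tfrac1p\big)^{-1}$, and then splits the second factor by another H\"older applied to $w=\prod_i w_i$. You instead apply Jensen's inequality to each of the $n+1$ factors separately, replacing every power mean by the geometric mean $\exp(\ave{\log\,\cdot\,}_R)$; the product of the geometric means is then $\exp\big(\ave{-\log w + \sum_i\log w_i}_R\big)=\exp(0)=1$ identically because $w=\prod_i w_i$. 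This is cleaner and bypasses the somewhat opaque choice of $\alpha_2$ and the second H\"older step, at the cost of invoking the $L^\alpha\ge L^0$ (power mean $\ge$ geometric mean) inequality rather than only finite-exponent H\"older. The two routes are morally interchangeable, since (3.5) is itself most easily seen via the same geometric-mean trick, but yours removes one layer of exponent bookkeeping.

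One small imprecision in the endpoint discussion: the phrase ``running the H\"older computation with $n$ equal exponents'' literally describes only the $p=\infty$ case, where $p_1=\cdots=p_n=\infty$ forces $p_i'=1$ and the H\"older weights are $(n,\dots,n)$. In the case $p_i=1$ (with the other $p_j$ arbitrary) the H\"older weights are still $np$ and $np_j'$ for $j\ne i$; they merely degenerate at the index $i$, where the corresponding factor becomes an essential supremum. The computation is otherwise the same as for finite exponents, so this does not affect correctness, but the parenthetical should be confined to the $p=\infty$ endpoint.
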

\begin{proof}
We fix an arbitrary $j \in \{1, \ldots, n\}$ for which we will show $[w_j^{-p_j'}]_{A_{np_j'}} \le [\vec{w}]_{A_{\vec p}}^{p_j'}$.
Notice that
\begin{equation}\label{eq:eq1}
\frac{1}{p} + \sum_{i \ne j} \frac{1}{p_i'} = n - 1 + \frac{1}{p_j}.
\end{equation}
We define $q_j$ via the identity
$$
\frac{1}{q_j} = \frac{1}{n - 1 + \frac{1}{p_j}}\cdot \frac 1p
$$
and for $i \ne j$ we set
$$
\frac{1}{q_i} = \frac{1}{n - 1 + \frac{1}{p_j}} \cdot\frac{1}{p_i'}.
$$
From \eqref{eq:eq1} we have that $\sum_i \frac{1}{q_i} = 1$.
By definition we have
\begin{equation}\label{eq:eq3}
[w_j^{-p_j'}]_{A_{np_j'}} = \sup_R \, \ave{ w_j^{-p_j'}}_R \ave{ w_j^{p_j' \frac{1}{np_j'-1}}}_R^{np_j' - 1}.
\end{equation}
Notice that
$$
p_j' \frac{1}{np_j'-1} = \frac{1}{n-\frac{1}{p_j'}} = \frac{1}{n - 1 + \frac{1}{p_j}}.
$$
Using H\"older's inequality with the exponents $q_1, \ldots, q_n$ we have the desired estimate
$$
\ave{ w_j^{-p_j'}}_R^{\frac 1{p_j'}} \ave{ w_j^{\frac{p}{q_j}}}_R^{\frac{q_j}{p}} = \ave{ w_j^{-p_j'}}_R^{\frac 1{p_j'}}\ave{ w^{\frac{p}{q_j}} \prod_{i \ne j } w_i^{-\frac{p}{q_j}}}_R^{\frac{q_j}{p}} \le \ave{w^p}_R^{\frac{1}{p}  }   \prod_{i} \ave{ w_i^{-p_i'} }_R^{\frac{1}{p_i'} } \le [\vec{w}]_{A_{\vec p}}.
$$
When $p_j=1$ this is 
$\esssup_R w_j^{-1} \ave{ w_j^{\frac{1}{n}}}_R^{n}\le [\vec{w}]_{A_{\vec p}}$, and so $[w_j^{\frac 1n}]_{A_1}^n \le [\vec{w}]_{A_{\vec p}}$.

We now move on to bounding $[w^p]_{A_{np}}$. Notice that by definition
\begin{equation}\label{eq:eq4}
[w^p]_{A_{np}} = \sup_R \, \ave{w^p}_R \ave{ w^{-\frac{p}{np-1}}}_R^{np-1}.
\end{equation}
We define $s_i$ via
$$
-\frac{p}{np-1} \cdot s_i = - p_i'
$$
and notice that then $\sum_i \frac{1}{s_i} = 1$.
Then, by H\"older's inequality with the exponents $s_1, \ldots, s_n$ we have
$$
\ave{w^p}_R \ave{ w^{-\frac{p}{np-1}}}_R^{np-1} \le \ave{w^p}_R \prod_i \ave{  w_i^{-p_i'}}_R^{\big(\frac{p}{np-1}\big)\frac{1}{p_i'}(np-1)} = \Big[ \ave{w^p}_R^{\frac{1}{p}} \prod_i \ave{  w_i^{-p_i'}}_R^{\frac{1}{p_i'}} \Big]^{p} \le [\vec{w}]_{A_{\vec p}}^{p},
$$
which is the desired bound for $[w^p]_{A_{np}}$. Notice that in the case $p=\infty$ we get
$$
[w^{-\frac{1}{n}}]_{A_1}^n = \sup_R \big\langle w^{-\frac{1}{n}} \big \rangle_R^{n} \esssup_R w
\le \sup_R\Big[ \prod_i \langle w_i^{-1} \rangle_R \Big]\esssup_R w = [\vec{w}]_{A_{\vec p}}.
$$

We then move on to bounding $ [\vec{w}]_{A_{\vec p}}$. It is based on the following inequality
\begin{equation}\label{eq:eq2}
1 \le \ave{ w^{-\frac{p}{np-1}} }_R^{n-\frac{1}{p}} \prod_i \Big\langle w_i^{\frac{1}{n - 1 + \frac{1}{p_i}}} \Big\rangle_R^{n-1+\frac{1}{p_i}}.
\end{equation}
Before proving this, we show how it implies the desired bound. We have
\begin{align*}
[\vec{w}&]_{A_{\vec p}} = \sup_R \, \ave{w^p}_R^{\frac 1 p} \prod_i \ave{w_i^{-p_i'}}_R^{\frac 1{p_i'}} \\
&\le \sup_R  \Big[ \ave{w^p}_R  \ave{ w^{-\frac{p}{np-1}} }_R^{np-1} \Big]^{\frac 1 p}
 \prod_i \Big[ \ave{w_i^{-p_i'}}_R \big \langle  w_i^{p_i' \frac{1}{np_i'-1}}\big \rangle_R^{np_i' - 1} \Big]^{\frac 1{p_i'}} 
 \le [w^p]_{A_{np}}^{\frac{1}{p}} \prod_i [w_i^{-p_i'}]_{A_{np_i'}}^{\frac{1}{p_i'}},
\end{align*} 
where in the last estimate we recalled \eqref{eq:eq3} and \eqref{eq:eq4}.

Let us now give the details of \eqref{eq:eq2}. We apply \eqref{eq:eq8} with $\alpha_1 = \frac{p}{np-1}$ and $\alpha_2 = \frac{1}{n(n-1)+\frac{1}{p}}$ to get
$$
1 \le \ave{ w^{-\frac{p}{np-1}} }_R^{n-\frac{1}{p}} \Big\langle w^{ \frac{1}{n(n-1)+\frac{1}{p}} }\Big\rangle_R^{n(n-1)+\frac{1}{p}}.
$$
The first term is already as in \eqref{eq:eq2}. Define $u_i$ via
$$
 \frac{1}{n(n-1)+\frac{1}{p}} u_i = \frac{1}{n - 1 + \frac{1}{p_i}}
$$
and notice that by H\"older's inequality with these exponents ($\sum_i \frac{1}{u_i} = 1$) we have
$$
\Big\langle w^{ \frac{1}{n(n-1)+\frac{1}{p}} }\Big\rangle_R^{n(n-1)+\frac{1}{p}} \le \prod_i \Big\langle w_i^{\frac{1}{n - 1 + \frac{1}{p_i}} }\Big\rangle_R^{ n - 1 + \frac{1}{p_i}},
$$
which matches the second term in \eqref{eq:eq2}.
\end{proof}
The following duality of multilinear weights is handy -- see \cite[Lemma 3.1]{LMS}. We give the short proof for convenience.
\begin{lem}\label{lem:lem7}
Let $\vec p=(p_1, \ldots,  p_n)$ with $1 < p_1, \ldots, p_n < \infty$ and $\frac 1 p = \sum_{i=1}^n \frac 1 {p_i} \in (0,1)$.
Let $\vec{w}=(w_1, \ldots, w_n)\in A_{\vec p}$ with $w = \prod_{i=1}^n w_i$ and define
\begin{align*}
\vec w^{\, i} &= (w_1, \ldots, w_{i-1}, w^{-1}, w_{i+1}, \ldots, w_n), \\
\vec p^{\,i} &= (p_1, \ldots, p_{i-1}, p', p_{i+1}, \ldots, p_n).
\end{align*}
Then we have
$$
[\vec{w}^{\,i}]_{A_{\vec p^{\, i}}} = 
[\vec{w}]_{A_{\vec p}}.
$$
\end{lem}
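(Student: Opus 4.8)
The plan is to read off the two data that the tuple $\vec w^{\,i}$ carries --- its product weight and its summed reciprocal exponent --- and then to check that, after these substitutions, the defining expression for $[\vec w^{\,i}]_{A_{\vec p^{\,i}}}$ is literally the same as that for $[\vec w]_{A_{\vec p}}$, only with the factors listed in a different order. So the argument will be a direct computation from Definition \ref{defn:defn1}, and the only thing requiring care is the bookkeeping of conjugate exponents.

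First I would record the key identities. Let $\tilde w = \prod_{j=1}^n (\vec w^{\,i})_j$ and $\frac{1}{\tilde p} = \sum_{j=1}^n \frac{1}{(\vec p^{\,i})_j}$ denote the product weight and exponent attached to $\vec w^{\,i}$. Since the $i$-th entry of $\vec w^{\,i}$ is $w^{-1} = \prod_{j=1}^n w_j^{-1}$ while the remaining entries are unchanged,
$$
\tilde w = w^{-1}\prod_{j \ne i} w_j = w_i^{-1}.
$$
Likewise, replacing the $i$-th exponent by $p'$ and using $\frac{1}{p} = \sum_j \frac{1}{p_j}$,
$$
\frac{1}{\tilde p} = \frac{1}{p'} + \sum_{j \ne i}\frac{1}{p_j} = \Big(1 - \frac{1}{p}\Big) + \Big(\frac{1}{p} - \frac{1}{p_i}\Big) = \frac{1}{p_i'},
$$
so $\tilde p = p_i'$. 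Both are legitimate: $\frac 1p \in (0,1)$ forces $p \in (1,\infty)$, hence $p' \in (1,\infty)$, $(p')' = p$, and $\frac{1}{\tilde p} = \frac{1}{p_i'}\in(0,1)$; moreover each entry of $\vec w^{\,i}$ is positive and finite a.e. because the $w_j$ are.

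Next I would substitute into the characteristic. Using $\tilde w^{\tilde p} = w_i^{-p_i'}$ gives $\ave{\tilde w^{\tilde p}}_R^{1/\tilde p} = \ave{w_i^{-p_i'}}_R^{1/p_i'}$; in the product $\prod_{j}\ave{(\vec w^{\,i})_j^{-(\vec p^{\,i})_j'}}_R^{1/(\vec p^{\,i})_j'}$ the factors with $j \ne i$ are unchanged and equal $\ave{w_j^{-p_j'}}_R^{1/p_j'}$, while the $j=i$ factor is $\ave{(w^{-1})^{-(p')'}}_R^{1/(p')'} = \ave{w^p}_R^{1/p}$ since $(p')' = p$. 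Collecting all factors,
$$
[\vec w^{\,i}]_{A_{\vec p^{\,i}}} = \sup_R \ave{w_i^{-p_i'}}_R^{1/p_i'}\Big(\prod_{j \ne i}\ave{w_j^{-p_j'}}_R^{1/p_j'}\Big)\ave{w^p}_R^{1/p} = \sup_R \ave{w^p}_R^{1/p}\prod_{j=1}^n \ave{w_j^{-p_j'}}_R^{1/p_j'} = [\vec w]_{A_{\vec p}},
$$
which is the claim (in particular $\vec w^{\,i}\in A_{\vec p^{\,i}}$, as the right-hand side is finite). The only "obstacle" here is notational rather than mathematical: one must correctly see that the product weight of $\vec w^{\,i}$ collapses to $w_i^{-1}$, that its summed reciprocal exponent is exactly $\frac{1}{p_i'}$, and that $(p')'=p$. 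With these three observations the two suprema coincide factor by factor.
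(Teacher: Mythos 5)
Your proposal is correct and follows essentially the same direct computation as the paper's proof: identify that the product weight of $\vec w^{\,i}$ is $w_i^{-1}$, that $\frac{1}{p'}+\sum_{j\ne i}\frac{1}{p_j}=\frac{1}{p_i'}$, and that $(p')'=p$, then substitute into the definition and match terms. The paper simply takes $i=1$ for notational convenience; the content is identical.
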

\begin{proof}
We take $i=1$ for notational convenience. Notice that $\frac{1}{p'} + \sum_{i=2}^n \frac{1}{p_i}  = \frac{1}{p_1'}$.
Notice also that $w^{-1} \prod_{i=2}^n w_i = w_1^{-1}$.
Therefore, we have
$$
[\vec{w}^{\,i}]_{A_{\vec p^{\, i}}} = \ave{w_1^{-p_1'}}_R^{\frac{1}{p_1'}} \ave{ w^{p}}_R^{\frac{1}{p}} \prod_{i=2}^n \ave{ w_i^{-p_i'}}_R^{\frac{1}{p_i'}} = [\vec{w}]_{A_{\vec p}}.
$$
\end{proof}

We now recall the recent extrapolation result of \cite{LMMOV}. The previous version, which did not yet allow exponents to be $\infty$
appeared in \cite{LMO}. For related independent work see \cite{Nieraeth}. The previous extrapolation results with the separate assumptions $w_i^{p_i} \in A_{p_i}$
appear in \cite{GM} and \cite{DU}. An even more general result than the one below appears in \cite{LMMOV}, but we will not need that generality here. Finally, we note that the proof
of this extrapolation result can be made to work in $m$-parameters even though \cite{LMMOV} provides the details only in the one-parameter case
-- we give more details later in Section \ref{app:app1}.
\begin{thm}\label{thm:ext}
	Let $f_1, \ldots, f_n$ and $g$ be given functions.
Given $\vec p=(p_1,\dots, p_n)$ with $1\le p_1,\dots, p_n\le \infty$ let $\frac1p= \sum_{i=1}^n \frac{1}{p_i}$. Assume that given any $\vec w=(w_1,\dots, w_n) \in A_{\vec p}$ the inequality
	\begin{equation}\label{extrapol:H*}
	\|gw\|_{L^{p}} \lesssim  \prod_{i=1}^n \|f_iw_i\|_{L^{p_i} }
	\end{equation}
	holds, where $w:=\prod_{i=1}^n w_i $. Then for all exponents $\vec q=(q_1,\dots,q_n)$, with $1< q_1,\dots, q_n\le \infty$ and $\frac1q= \sum_{i=1}^n \frac{1}{q_i} >0$, and for all weights $\vec v=(v_1,\dots, v_n) \in A_{\vec q}$ the inequality
	\begin{equation*}\label{extrapol:C*}
	\|gv\|_{L^{q} } \lesssim \prod_{i=1}^n \|f_iv_i\|_{L^{q_i} }
	\end{equation*}
	holds, where $v:=\prod_{i=1}^n v_i $. 
	
	Given functions $f_1^j, \ldots, f_n^j$ and $g^j$ so that \eqref{extrapol:H*} holds uniformly on $j$, we have
	for the same family of exponents and
	weights as above, and for all exponents $\vec{s}=(s_1,\dots, s_n)$ with $1< s_1,\dots, s_n\le \infty$ and $\frac1s=\sum_i \frac{1}{s_i} >0$ the inequality  
	\begin{equation}\label{extrapol:vv*}
	\| (g^j v)_j\|_{L^{q}(\ell^s)}
	\lesssim
	\prod_{i=1}^n \|(f_i^j v_i)_j\|_{L^{q_i}(\ell^{s_i}) }.
	\end{equation}
\end{thm}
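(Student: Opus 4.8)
The plan is to prove Theorem~\ref{thm:ext} by Rubio de Francia extrapolation in the form tailored to genuinely multilinear weight classes in \cite{LMMOV} (building on \cite{LMO}; cf.\ \cite{Nieraeth}). The only essential adaptation forced by the $m$-parameter product structure is that the Hardy--Littlewood maximal operator is replaced throughout by the strong maximal operator $M_S f = \sup_{R\ni x}\ave{|f|}_R$, the supremum taken over all rectangles $R\subset\R^{d_1}\times\cdots\times\R^{d_m}$. The input we need is that $M_S$ is bounded on $L^r(u)$ for $1<r\le\infty$ and $u\in A_r$ (over rectangles), with norm controlled by $[u]_{A_r}$: by the $m$-parameter analogue of \eqref{eq:eq28}, $u\in A_r$ if and only if $u$ lies uniformly in $A_r$ in each of the $m$ variables separately, and $M_S\le M^{(1)}\circ\cdots\circ M^{(m)}$ pointwise, where $M^{(i)}$ is the one-parameter maximal operator acting in the $i$th variable. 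Hence the standard iteration $\mathcal R h:=\sum_{k\ge 0}M_S^k h/(2\|M_S\|_{L^r(u)\to L^r(u)})^k$ produces, from any $0\le h\in L^r(u)$, a majorant $\mathcal R h\ge h$ with $\|\mathcal R h\|_{L^r(u)}\le 2\|h\|_{L^r(u)}$ and $[\mathcal R h]_{A_1}\lesssim[u]_{A_r}^c$, and likewise for the dual algorithm.

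For the scalar conclusion I would fix the target tuple $\vec q$ and $\vec v=(v_1,\dots,v_n)\in A_{\vec q}$, write $v=\prod_i v_i$, and normalise so that $\|f_iv_i\|_{L^{q_i}}=1$ for each $i$; the goal is then $\|gv\|_{L^q}\lesssim 1$. By Lemma~\ref{lem:lem1} we have $v^q\in A_{nq}$ and $v_i^{-q_i'}\in A_{nq_i'}$ with constants controlled by $[\vec v]_{A_{\vec q}}$, and in the admissible range one always has $nq>1$ and $nq_i'>1$, so the Rubio de Francia algorithm is available for all of these $n+1$ weights. Running the bookkeeping of \cite{LMMOV} one applies $\mathcal R$ (and its dual) once to the $g$-side against $v^q$ and once to each $|f_iv_i|$ against $v_i^{-q_i'}$, at exponents governed by $q$ and by the ratios $q_i/p_i$, and assembles from the resulting majorants a single tuple $\vec w=(w_1,\dots,w_n)$ with the two properties that (i)~$\vec w\in A_{\vec p}$ with $[\vec w]_{A_{\vec p}}$ bounded independently of the functions -- this is the product-space version of the weight computation of \cite{LOPTT}, invoking the converse inequality of Lemma~\ref{lem:lem1} together with factorisation of product-space weights -- and (ii)~substituting $\vec w$ into \eqref{extrapol:H*} and unwinding the majorant inequalities via the $L^r(u)$-bounds on $\mathcal R$ returns exactly $\|gv\|_{L^q}\lesssim 1$. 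The case $q<1$, and the endpoint exponents $p_i,q_i\in\{1,\infty\}$, are absorbed precisely as in \cite{LMMOV}: one works with the power $|g|^q$ and the $A_{nq}$-weight $v^q$ rather than dualising $L^q$, and it is the renormalised definition of $A_{\vec p}$ (Definition~\ref{defn:defn1}) that lets the corresponding Rubio de Francia step, read off as producing an $A_1$ weight, fit into the scheme.

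The $\ell^s$-valued estimate \eqref{extrapol:vv*} then follows formally, by Rubio de Francia's classical observation that extrapolation self-improves to vector-valued bounds. Fix $\vec w=(w_1,\dots,w_n)\in A_{\vec s}$ (legitimate since $1<s_i\le\infty$ and $1/s>0$) and put $w=\prod_i w_i$. Applying the scalar conclusion at the exponents $\vec s$ to each index $j$, with the hypothesis \eqref{extrapol:H*} used uniformly in $j$, gives $\|g^jw\|_{L^s}\lesssim\prod_{i=1}^n\|f_i^jw_i\|_{L^{s_i}}$ with implicit constant independent of $j$; raising to the power $s$, summing over $j$, and applying H\"older's inequality with the exponents $s_i/s$ (the reciprocals $s/s_i$ summing to $s\sum_i 1/s_i=1$) gives
\[
\Big\|\Big(\sum_j|g^j|^s\Big)^{1/s}w\Big\|_{L^s}\lesssim\prod_{i=1}^n\Big\|\Big(\sum_j|f_i^j|^{s_i}\Big)^{1/s_i}w_i\Big\|_{L^{s_i}}
\]
for every $\vec w\in A_{\vec s}$. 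Applying the scalar extrapolation once more, now with base exponents $\vec s$ and with $G:=(\sum_j|g^j|^s)^{1/s}$ and $F_i:=(\sum_j|f_i^j|^{s_i})^{1/s_i}$ in the roles of $g$ and $f_i$, transfers this inequality to the exponents $\vec q$ and to all $\vec v\in A_{\vec q}$, which is \eqref{extrapol:vv*}.

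The main obstacle is the scalar weight construction -- steps~(i)--(ii) above: choosing the exponents of the several Rubio de Francia iterations and combining the majorants so that the output tuple genuinely lies in $A_{\vec p}$, rather than merely satisfying the stronger separate conditions $w_i^{p_i}\in A_{p_i}$, with a constant uniform in the functions, while keeping the computation valid at the endpoint exponents $p_i,q_i\in\{1,\infty\}$. This is the content already carried out, in the one-parameter case, in \cite{LMMOV}; what genuinely has to be verified here is only that that argument uses no property of the maximal operator not shared by $M_S$ and no property of the weights beyond the product-space $A_\infty$ and reverse-H\"older facts, which it does not. The maximal-function bounds in the first paragraph and the $\ell^s$-reduction in the third are then routine given the one-parameter theory.
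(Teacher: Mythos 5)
Your high-level plan (establish a scalar off-axis extrapolation lemma, then bootstrap to $\ell^s$ using the uniform-in-$j$ hypothesis) is the same as the paper's, and the $\ell^s$ step you sketch is essentially the standard Rubio de Francia self-improvement. The gap is in the scalar step, and specifically in what you declare to be the only thing that needs checking. The LMO/LMMOV algorithm for genuinely multilinear classes $A_{\vec p}$ does \emph{not} run its Rubio de Francia iteration against the (unweighted) strong maximal operator $M_S$; it runs against \emph{measure-weighted} maximal operators $M^\mu$, with averages $\langle f\rangle_R^\mu=\frac{1}{\mu(R)}\int_R f\ud\mu$, where $\mu$ is an $A_\infty$ weight built from $\vec v$ (e.g.\ $\mu=v_i^{-q_i'}$ or $\mu=v^q$, which land in $A_\infty$ by Lemma \ref{lem:lem1}). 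The resulting $\mathcal R h$ is then an $A_1(\mu)$ weight (an $A_1$ weight relative to the measure $\mu$), \emph{not} a Lebesgue $A_1$ weight. This is not a cosmetic difference: the $A_{\vec p}$ hypothesis only furnishes $A_\infty$-type information on the individual pieces, and the majorants must carry that measure with them to reassemble into a new tuple in $A_{\vec p}$; unweighted $A_1$ majorants of the kind produced by your $M_S$-based $\mathcal R$ are what one uses for the \emph{separate} Grafakos--Martell classes $w_i^{p_i}\in A_{p_i}$, not for $A_{\vec p}$.

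Consequently, the statement that the argument ``uses no property of the maximal operator not shared by $M_S$'' is the wrong claim to be verifying. What the multi-parameter setting actually forces you to prove — and what Section 8 of the paper is devoted to — is the bi-parameter (and $m$-parameter) analogue of the $M^\mu$ bound: for $\mu\in A_\infty(\R^{d_1}\times\R^{d_2})$ and $w\in A_p(\mu)$, $1<p<\infty$, one has $\|M^\mu f\|_{L^p(w\mu)}\lesssim\|f\|_{L^p(w\mu)}$. This is the paper's Lemma \ref{lem:lem5}. Its proof is not automatic from $M_S$ being bounded on $L^r(u)$ for $u\in A_r$: it requires Lerner's pointwise reduction of $M^\mu$ to a composition of three weighted maximal operators, then R.\ Fefferman's strong maximal theorem with respect to an $A_\infty$ measure (Proposition \ref{prop:prop1}) applied to each, and finally a nontrivial verification that the auxiliary measures $w\mu$ and $\sigma\mu$ are bi-parameter $A_\infty$ weights (done via a slice-by-slice argument in the paper). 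Once Lemma \ref{lem:lem5} and the corresponding $A_1(\mu)$-producing RdF iteration are in place, the remainder \emph{is} imported essentially verbatim from LMO/LMMOV — but Lemma \ref{lem:lem5} is the missing step in your proposal, and it is the one with real content.
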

\begin{rem}
Using Lemma \ref{lem:lem7} and extrapolation, Theorem \ref{thm:ext},
we see that the weighted boundedness of $T$ transfers to the adjoints $T^{j*}$. Partial adjoints
have to always be considered separately, though.
\end{rem}

As a final thing in this section, we demonstrate the necessity of the $A_{\vec p}$ condition for the weighted boundedness of SIOs.
We work in the $m$-parameter setting and let $\R^d=\R^{d_1} \times \dots \times \R^{d_n}$.
Let $R_j$ be the following version of the $n$-linear one-parameter Riesz transform in $\R^{d_j}$:
\[
R_j(f_1,\dots, f_n)=\textup{p.v.} \int_{\R^{d_jn}}\frac{\sum_{i=1}^n\sum_{k=1}^{d_j}(x-y_i)_k}{(\sum_{i=1}^{n}|x-y_i|)^{d_j n+1}}f_1(y_1)\cdots f_n(y_n) \ud y_1\cdots \ud y_n,
\]
where $(x-y_i)_k$ is the $k$-th coordinate of $x-y_i \in \R^{d_j}$. 
Consider the tensor product
$
R_{1}\otimes R_{2}\otimes \cdots \otimes R_{m}. 
$
Let $\vec w =(w_1, \dots, w_n)$ be a multilinear weight, that is, $0 < w_i < \infty$ a.e., and denote $w=\prod_{i=1}^n w_i$. 
Suppose that for some 
exponents $1<  p_1,\dots, p_n\le \infty$ with $1/p=\sum_{i=1}^n 1/{p_i}>0$ the estimate
\[
\| R_{1}\otimes R_{2}\otimes \cdots \otimes R_{m} (f_1, \dots, f_n)\|_{L^{p,\infty}(w^p)}
\lesssim  \prod_{i=1}^n \|f_iw_i\|_{L^{p_i}}
\]
holds for all $f_i \in L^\infty _c$. We show that $\vec w$ is an $m$-parameter $A_{\vec p}$ weight.

Define $\sigma_i=w_i^{-p_i'}$. Let $E \subset  \R^d$ be an arbitrary set such that $1_E \sigma_i \in L^\infty_c$ for all $i=1, \dots, n$. 
Fix an $m$-parameter rectangle $R=R^1 \times \cdots \times \R^m \subset \R^d$, where each $R^j$ is a cube. 
Let $R^+= (R^1)^+ \times \cdots \times (R^m)^+$,
where $(R^j)^+:=R^j+(\ell(R^j), \dots, \ell(R^j))$.

Using the kernel of $R_1 \otimes \cdots \otimes R_m$  we have for all $x \in R^+$ that
\begin{align*}
R_{1}\otimes R_{2}\otimes \cdots \otimes R_{m} (1_E \sigma_11_R, \dots, 1_E\sigma_n1_R)(x)
\gtrsim \prod_{i=1}^n \langle  1_E\sigma_i\rangle_R. 
\end{align*}
Hence 
\begin{equation*}
w^p(R^+)^{\frac 1p}\prod_{i=1}^n \langle 1_E\sigma_i\rangle_R  
\lesssim  \prod_{i=1}^n \|1_E\sigma_i1_Rw_i \|_{L^{p_i}}     
=\prod_{i=1}^n \sigma_i(E \cap R)^{\frac{1}{p_i}},
\end{equation*}
which gives that
$
\langle w^p\rangle_{R^+}^{\frac 1p}\prod_{i=1}^n \langle 1_E\sigma_i\rangle_R^{\frac{1}{p_i'}} 
\lesssim 1. 
$
Since $E$ was arbitrary this implies the estimate
\begin{equation}\label{eq:eq29}
\langle w^p\rangle_{R^+}^{\frac 1p}\prod_{i=1}^n \langle \sigma_i\rangle_R^{\frac{1}{p_i'}} 
\lesssim 1. 
\end{equation}
Similarly, we can show that
\begin{equation}\label{eq:eq30}
\langle w^p\rangle_{R}^{\frac 1p}\prod_{i=1}^n \langle \sigma_i\rangle_{R^+}^{\frac{1}{p_i'}} 
\lesssim 1. 
\end{equation}

By H\"older's inequality we have that 
\[
\langle w^{-\frac p{np-1}}\rangle_{R^+}^{\frac{np-1}p}\le \prod_{i=1}^n \langle \sigma_i\rangle_{R^+}^{\frac 1{p_i'}}.
\]
Hence, \eqref{eq:eq30} shows that
\[
\langle w^p\rangle_{R}^{\frac 1p} \langle w^{-\frac p{np-1}}\rangle_{R^+}^{\frac{np-1}p}
\lesssim 1.
\]
Therefore,
\begin{align*}
\frac{\langle w^p\rangle_{R}^{\frac 1p}}{\langle w^p\rangle_{R^+}^{\frac 1p}}
=\frac{\langle w^p\rangle_{R}^{\frac 1p}\langle w^{-\frac p{np-1}}\rangle_{R^+}^{\frac{np-1}p}}{\langle w^p\rangle_{R^+}^{\frac 1p}\langle w^{-\frac p{np-1}}\rangle_{R^+}^{\frac{np-1}p}} 
\lesssim 1,
\end{align*}
where the denominator in the middle term was $\ge 1$. 
Thus, $\langle w^p\rangle_{R}^{\frac 1p} \lesssim \langle w^p\rangle_{R^+}^{\frac 1p}$, which together with \eqref{eq:eq29} gives that
$
\langle w^p\rangle_{R}^{\frac 1p}\prod_{i=1}^n \langle \sigma_i\rangle_R^{\frac 1{p_i'}}  
\lesssim 1.
$

\section{Maximal functions}
It was proved in \cite{GLTP} that the multilinear bi-parameter (or multi-parameter) maximal function is bounded
with respect to the genuinely multilinear bi-parameter weights. We give a new efficient proof of this. Let $\calD = \calD^1 \times \calD^2$
be a fixed lattice of dyadic rectangles and define
$$
M_{\calD}(f_1, \ldots, f_n) = \sup_{R \in \calD} \prod_{i=1}^n \ave{ |f_i| }_R 1_R.
$$
\begin{prop}\label{prop:prop2}
If $1 < p_1, \ldots, p_n \le \infty$ and $1/p = \sum_{i=1}^n 1/p_i$ we have
$$
\|M_{\calD}(f_1, \ldots, f_n)w \|_{L^p} \lesssim \prod_{i=1}^n \|f_i w_i\|_{L^{p_i}} 
$$
for all multilinear bi-parameter weights $\vec w \in A_{\vec p}$. 
\end{prop}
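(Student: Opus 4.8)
The strategy is to establish only the single endpoint estimate corresponding to the tuple $\vec p = (\infty, \ldots, \infty)$ --- flagged already in the remark after Definition~\ref{defn:defn1} as a legitimate starting point for extrapolation --- and then to feed it into the extrapolation Theorem~\ref{thm:ext}. So first I would prove that for every $\vec w = (w_1, \ldots, w_n) \in A_{(\infty, \ldots, \infty)}$, writing $w = \prod_{i=1}^n w_i$,
$$
\|M_{\calD}(f_1, \ldots, f_n) w\|_{L^\infty} \le [\vec w]_{A_{(\infty, \ldots, \infty)}} \prod_{i=1}^n \|f_i w_i\|_{L^\infty}.
$$

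This is immediate from the definitions. Fix $R \in \calD$. For a.e.\ $x \in R$ we have $w(x) \le \esssup_R w$, while for each $i$
$$
\ave{|f_i|}_R = \ave{|f_i| w_i \cdot w_i^{-1}}_R \le \|f_i w_i\|_{L^\infty}\, \esssup_R w_i^{-1}.
$$
Multiplying these and invoking the $p_i = \infty = p$ interpretation of the $A_{\vec p}$ quantity, namely $\esssup_R w \cdot \prod_{i=1}^n \esssup_R w_i^{-1} \le [\vec w]_{A_{(\infty, \ldots, \infty)}}$, gives $w(x) \prod_{i=1}^n \ave{|f_i|}_R \le [\vec w]_{A_{(\infty, \ldots, \infty)}} \prod_{i=1}^n \|f_i w_i\|_{L^\infty}$ for a.e.\ $x \in R$. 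Taking the supremum over the $R \in \calD$ containing $x$ and then the essential supremum in $x$ gives the claimed bound; in particular this already handles the degenerate case $1/p = 0$ of the proposition.

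For $1/p > 0$, I would apply Theorem~\ref{thm:ext} with the fixed functions $f_1, \ldots, f_n$ and $g := M_{\calD}(f_1, \ldots, f_n)$, taking the starting tuple to be $\vec p = (\infty, \ldots, \infty)$ (for which $1/p = 0$ is permitted there). The endpoint bound above is precisely the hypothesis~\eqref{extrapol:H*} for this tuple, with implicit constant $[\vec w]_{A_{\vec p}}$, so Theorem~\ref{thm:ext} delivers $\|M_{\calD}(f_1, \ldots, f_n) v\|_{L^q} \lesssim \prod_{i=1}^n \|f_i v_i\|_{L^{q_i}}$ for all $\vec q = (q_1, \ldots, q_n)$ with $1 < q_i \le \infty$, $1/q = \sum_i 1/q_i > 0$, and all $\vec v \in A_{\vec q}$ --- which is exactly Proposition~\ref{prop:prop2}. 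I do not expect any real obstacle here: the entire content is that the $L^\infty$ endpoint is trivial and that the strength of the argument comes from the feature of Theorem~\ref{thm:ext} of being able to both start from and reach exponents equal to $\infty$. The one point requiring a little care is that extrapolation is applied to $g$ as the fixed function $M_{\calD}(f_1, \ldots, f_n)$ rather than as an operator identity, which is exactly the generality in which Theorem~\ref{thm:ext} is phrased.
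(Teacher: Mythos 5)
Your overall strategy is precisely the one the paper uses: prove the $L^\infty$ bound for the tuple $(\infty,\ldots,\infty)$ and then feed it into extrapolation, Theorem~\ref{thm:ext}. However, there is a genuine slip in the identification of the $A_{(\infty,\ldots,\infty)}$ constant. Definition~\ref{defn:defn1} interprets $\ave{w_i^{-p_i'}}_R^{1/p_i'}$ as $\esssup_R w_i^{-1}$ when $p_i = 1$, \emph{not} when $p_i = \infty$; when $p_i = \infty$ one has $p_i' = 1$ and the factor is simply the average $\ave{w_i^{-1}}_R$, so
$$
[\vec w]_{A_{(\infty,\ldots,\infty)}} = \sup_R \, \esssup_R w \cdot \prod_{i=1}^n \ave{w_i^{-1}}_R.
$$
Since $\ave{w_i^{-1}}_R \le \esssup_R w_i^{-1}$, the inequality you invoke, $\esssup_R w \cdot \prod_i \esssup_R w_i^{-1} \le [\vec w]_{A_{(\infty,\ldots,\infty)}}$, goes the wrong way and is false in general: the left side can be infinite (take $w_i^{-1}$ unbounded but locally integrable) while $[\vec w]_{A_{(\infty,\ldots,\infty)}}$ is finite. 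The fix is immediate: replace your coarse bound $\ave{|f_i|}_R \le \|f_iw_i\|_{L^\infty}\,\esssup_R w_i^{-1}$ with the sharper, equally elementary $\ave{|f_i|}_R = \ave{|f_iw_i|\,w_i^{-1}}_R \le \|f_iw_i\|_{L^\infty}\,\ave{w_i^{-1}}_R$. Then for a.e.\ $x \in R$,
$$
w(x)\prod_{i=1}^n\ave{|f_i|}_R \le \esssup_R w \cdot \prod_{i=1}^n \ave{w_i^{-1}}_R \cdot \prod_{i=1}^n \|f_iw_i\|_{L^\infty} \le [\vec w]_{A_{(\infty,\ldots,\infty)}} \prod_{i=1}^n \|f_iw_i\|_{L^\infty},
$$
which is exactly the pointwise estimate in the paper's proof. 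With this correction, the rest of your argument --- taking the supremum over $R \ni x$, then the essential supremum in $x$, then extrapolating --- is identical to the paper's.
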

\begin{proof}
Our proof is based on the proof of the case $\vec p =  (p_1, \ldots, p_n) = (\infty, \ldots, \infty)$ and extrapolation, Theorem \ref{thm:ext}.
We have
$$
\sup_R \Big[\prod_i \langle w_i^{-1} \rangle_R \Big]\cdot \esssup_R w = [\vec{w}]_{A_{\vec p}},
$$
and therefore
$$
\prod_i \langle w_i^{-1} \rangle_R \lesssim \frac{1}{\esssup_R w}.
$$
For every $R \in \calD$ let $N_R \subset R$ be such that $|N_R| = 0$ and $w(x) \le \esssup_R w$ for all $x \in R \setminus N_R$.
Let $N = \bigcup_{R \in \calD} N_R$. Then $|N| = 0$ and for every $x \in \R^d \setminus N$ we have
$$
\frac{1}{w(x)} \ge \sup_{R \in \calD} \frac{1_R(x)}{\esssup_R w}.
$$
Thus, we have
\begin{align*}
M_{\calD}(f_1, \ldots, f_n)(x)w(x) &\le \Big[\prod_i \|f_i w_i\|_{L^{\infty}} \Big]\sup_{R \in \calD} \Big[ 1_R(x) \prod_i \ave{ w_i^{-1}}_R \Big] \cdot w(x) \\
&\lesssim \Big[\prod_i \|f_i w_i\|_{L^{\infty}}\Big] \sup_{R \in \calD} \Big[ \frac{1_R(x) }{\esssup_R w} \Big] \cdot w(x) \le \prod_i \|f_i w_i\|_{L^{\infty}} 
\end{align*}
almost everywhere, and so $\|M_{\calD}(f_1, \ldots, f_n)w\|_{L^{\infty}} \lesssim \prod_i \|f_i w_i\|_{L^{\infty}}$
as desired.

\end{proof}

If an average is with respect to a different measure $\mu$ than the Lebesgue measure, we write
$\langle f \rangle_R^{\mu} := \frac{1}{\mu(R)} \int_R f\ud \mu$ and define
$$
M_{\calD}^{\mu} f = \sup_R 1_R  \langle |f| \rangle_R^{\mu}.
$$
The following is a result of R. Fefferman \cite{RF3}. Recently, we also recorded a proof in \cite[Appendix B]{LMV:Bloom}.
\begin{prop}\label{prop:prop1}
Let $\lambda \in A_p$, $p \in (1,\infty)$, be a bi-parameter weight. Then for all $s \in (1,\infty)$ we have
$$
\| M_{\calD}^{\lambda} f \|_{L^s(\lambda)} \lesssim [\lambda]_{A_p}^{1+1/s} \|f\|_{L^s(\lambda)}.
$$
\end{prop}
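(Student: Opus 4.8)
I expect the proof to proceed by reducing the bi-parameter estimate to the classical one-parameter weighted maximal bound via slicing, the two key structural facts being the ones already recorded in Section \ref{sec:weights}: by \eqref{eq:eq28}, for a.e.\ $x_1$ the slice $\lambda(x_1,\cdot)$ lies in $A_p(\R^{d_2})$ with $[\lambda(x_1,\cdot)]_{A_p}\le[\lambda]_{A_p}$ (and symmetrically in the other variable), and for every cube $I^2\subset\R^{d_2}$ the column average $\ave{\lambda}_{I^2,2}$ lies in $A_p(\R^{d_1})$ with constant $\lesssim[\lambda]_{A_p}$ \emph{uniformly} in $I^2$. The other ingredient is the one-parameter version of the statement itself: for any weight $\sigma$ on a Euclidean space, the dyadic maximal operator $g\mapsto\sup_Q 1_Q\ave{|g|}_Q^\sigma$ formed with $\sigma$-averages is weak type $(1,1)$ for $\sigma$ (disjointness of the maximal stopping cubes) and trivially bounded on $L^\infty(\sigma)$, hence bounded on $L^s(\sigma)$ for every $s\in(1,\infty)$, and keeping track of constants in the usual way produces the factor $[\sigma]_{A_p}^{1+1/s}$.

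The first concrete step is to peel off the second variable. Writing a rectangle as $R=I^1\times I^2$, using $\lambda(I^1\times I^2)=\int_{I^1}\lambda(y_1,I^2)\ud y_1$, and bounding $\ave{|f(y_1,\cdot)|}_{I^2}^{\lambda(y_1,\cdot)}\le G(y_1,t)$ for every $t\in I^2$, where $G(y_1,t)$ denotes the one-parameter weighted dyadic maximal function in the second variable with the slice weight $\lambda(y_1,\cdot)$, one gets for $x=(x_1,x_2)\in R$ the pointwise bound
\[
\ave{|f|}_R^\lambda
=\frac{\int_{I^1}\ave{|f(y_1,\cdot)|}_{I^2}^{\lambda(y_1,\cdot)}\,\lambda(y_1,I^2)\ud y_1}{\int_{I^1}\lambda(y_1,I^2)\ud y_1}
\le\big\langle G(\cdot,x_2)\big\rangle_{I^1}^{\ave{\lambda}_{I^2,2}},
\]
and hence, after taking the supremum over $R\ni x$,
\[
M_\calD^\lambda f(x_1,x_2)\le\sup_{I^2\ni x_2}M^{\ave{\lambda}_{I^2,2}}\big(G(\cdot,x_2)\big)(x_1),
\]
where $M^{\ave{\lambda}_{I^2,2}}$ is the one-parameter dyadic maximal operator in the first variable with the weight $\ave{\lambda}_{I^2,2}$. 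By the slice fact and the one-parameter estimate, applied for each fixed $x_1$ and then integrated over $x_1$ by Fubini, one has $\Norm{G}{L^s(\lambda)}\lesssim[\lambda]_{A_p}^{1+1/s}\Norm{f}{L^s(\lambda)}$, so everything is reduced to controlling, in $L^s(\lambda)$, the operator $h\mapsto\sup_{I^2\ni x_2}M^{\ave{\lambda}_{I^2,2}}\big(h(\cdot,x_2)\big)(x_1)$.

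The main obstacle is exactly this last reduction, and it is the only place where the $A_p$ hypothesis is genuinely used — without a condition on $\lambda$ the bi-parameter maximal function is unbounded — the point being that the outer weights $\ave{\lambda}_{I^2,2}$ vary with $I^2$, so one cannot simply quote the one-parameter $L^s$-bound for a fixed weight. To overcome this I would use that $A_p$ passes to a uniform $A_\infty$ (equivalently reverse Hölder) bound for the column averages, which upgrades to a product reverse Hölder inequality $\ave{\lambda^{1+\delta}}_R^{1/(1+\delta)}\lesssim\ave{\lambda}_R$ for some $\delta=\delta([\lambda]_{A_p})>0$ (R.\ Fefferman's observation, obtained by combining the one-parameter reverse Hölder inequalities for the slices in the two variables with \eqref{eq:eq28}); this yields $M_\calD^\lambda f\lesssim\big(M_\calD(|f|^{r'})\big)^{1/r'}$ with $r'=(1+\delta)'$, where $M_\calD$ is the \emph{unweighted} dyadic strong maximal function, which factorizes as the composition of the two one-parameter Lebesgue dyadic maximal operators and is therefore bounded on $L^t(\lambda)$ whenever $\lambda\in A_t(\R^{d_1}\times\R^{d_2})$. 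One then concludes for $s>r'$ from $\Norm{M_\calD^\lambda f}{L^s(\lambda)}\lesssim\Norm{M_\calD(|f|^{r'})}{L^{s/r'}(\lambda)}^{1/r'}$ after checking $\lambda\in A_{s/r'}$ using the self-improvement of $A_p$, while the remaining small exponents are recovered by a complementary (duality/interpolation) argument reducing to the large-exponent case; assembling the one-parameter constants and the exponent $\delta$ along the way gives the stated bound $[\lambda]_{A_p}^{1+1/s}$.
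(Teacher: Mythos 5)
The paper does not prove Proposition \ref{prop:prop1}; it attributes the result to R.\ Fefferman \cite{RF3} and cites \cite[Appendix B]{LMV:Bloom} for a written-out proof, so there is no in-paper argument to compare against line by line. Your slicing reduction in the first two displays is correct (and is the right point of departure), and you correctly identify that the operator $h\mapsto\sup_{I^2\ni x_2}M^{\ave{\lambda}_{I^2,2}}(h(\cdot,x_2))(x_1)$ cannot be handled by quoting the one-parameter bound with a fixed outer weight, since $\ave{\lambda}_{I^2,2}$ genuinely varies with $I^2$. You then abandon this route and pivot to the reverse H\"older inequality; the product reverse H\"older inequality does hold for $\lambda\in A_\infty(\R^{d_1}\times\R^{d_2})$ (it follows by slicing exactly as you say), and it does yield the pointwise domination $M_\calD^\lambda f\lesssim\big(M_\calD(|f|^{r'})\big)^{1/r'}$ with $r'=(1+\delta)'$ and $M_\calD$ the unweighted strong maximal function.

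The gap is in the final step. Having established $M_\calD^\lambda f\lesssim(M_\calD(|f|^{r'}))^{1/r'}$, you need $\lambda\in A_{s/r'}$ (with $s/r'>1$) to conclude. The reverse H\"older exponent $\delta$ is a fixed positive number determined by $\lambda$, so $r'$ is bounded away from $1$; and by openness $\lambda\in A_{p-\epsilon}$ for some $\epsilon>0$, but $p-\epsilon$ is still close to $p$. Thus this argument only covers $s>r'(p-\epsilon)$, a threshold that is strictly larger than $1$ whenever $p>1$ (and grows with $[\lambda]_{A_p}$). The proposition claims boundedness for \emph{every} $s\in(1,\infty)$, so the small-$s$ regime is genuinely missing. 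Your closing remark that ``the remaining small exponents are recovered by a complementary (duality/interpolation) argument reducing to the large-exponent case'' does not point to an actual mechanism: $M_\calD^\lambda$ is sublinear (not linear), so there is no obvious duality reducing $L^s$ to $L^{s'}$; and there is no weak-$(1,1)$ endpoint to interpolate downward from, since the strong maximal operator is not weak type $(1,1)$ even in the unweighted setting. This is not a detail but a missing idea --- the small-$s$ range requires a different tool, for instance a Cordoba--Fefferman type covering lemma adapted to $A_\infty$ weights, or a careful Carleson/principal-rectangle argument; this is what the cited proofs in \cite{RF3} and \cite{LMV:Bloom} supply. A secondary issue: the reverse H\"older route, even on its range of validity, does not obviously deliver the quantitative dependence $[\lambda]_{A_p}^{1+1/s}$ asserted in the proposition, since the reverse H\"older exponent and the $A_{s/r'}$ constant enter in a different combination.
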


We formulate some vector-valued versions of Proposition \ref{prop:prop1}. 
We state the following version
with two sequence spaces -- of course, a version with arbitrarily many also works.
Proposition \ref{prop:vecvalmax} is proved in the end of Section \ref{app:app1}.

\begin{prop}\label{prop:vecvalmax}
Let $\mu\in A_\infty$, $w\in A_p(\mu)$ and $1<p,s,t<\infty$. Then we have
\[
\left\| \Big\|\big\|\{M^\mu f_j^i\}\big\|_{\ell^s}\Big\|_{\ell^t}\right\|_{L^p(w\mu)}\lesssim \left\|\Big\|\big\|\{f_j^i\}\big\|_{\ell^s}\Big\|_{\ell^t}\right\|_{L^p(w\mu)}.
\]In particular, we have 
\[
\left\| \Big\|\big\|\{M^\mu f_j^i\}\big\|_{\ell^s}\Big\|_{\ell^t}\right\|_{L^p(\mu)}\lesssim \left\|\Big\|\big\|\{f_j^i\}\big\|_{\ell^s}\Big\|_{\ell^t}\right\|_{L^p(\mu)}.
\]
\end{prop}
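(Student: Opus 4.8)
The plan is to reduce Proposition~\ref{prop:vecvalmax} to the scalar weighted maximal bound for $M^\mu$ and then apply Rubio de Francia extrapolation twice, the second application only serving to upgrade a single vector-valued estimate to the double mixed-norm one. The scalar input I would use is
\[
\|M^\mu f\|_{L^p(w\mu)} \lesssim_{[w]_{A_p(\mu)}} \|f\|_{L^p(w\mu)}, \qquad 1<p<\infty,\ w\in A_p(\mu),
\]
which is the exact analogue of Proposition~\ref{prop:prop1} (and of \cite{RF3}) with Lebesgue measure replaced by $\mu$: since $\mu\in A_\infty$ is doubling and its restriction to the product lattice $\calD$ is dyadic doubling, R.~Fefferman's argument for the product maximal function goes through with $\mu$-averages in place of Lebesgue averages. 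Taking $w\equiv 1\in A_p(\mu)$ (with $[1]_{A_p(\mu)}=1$) in particular yields $M^\mu\colon L^r(\mu)\to L^r(\mu)$ for every $r\in(1,\infty)$.

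Next I would observe that the extrapolation machinery behind Theorem~\ref{thm:ext} — in the form set up in Section~\ref{app:app1}, specialised to the linear case $n=1$ — is insensitive to replacing Lebesgue measure by $\mu$. The Rubio de Francia iteration is run with respect to the measure $v\mu$ using the maximal operator $M^{v\mu}$, which is bounded on $L^r(v\mu)$ for every $r>1$ for any measure, and the only structural facts used beyond this are reverse-H\"older-type properties of the weights involved, which are available because $\mu\in A_\infty$. Thus the conclusion of Theorem~\ref{thm:ext} holds with $A_{\vec p}$ replaced by $A_p(\mu)$ and all $L^p$-norms replaced by $L^p(\cdot\,\mu)$-norms: if a family of pairs of functions $(g,f)$ satisfies $\|g\|_{L^{p_0}(v\mu)}\lesssim\|f\|_{L^{p_0}(v\mu)}$ uniformly for all $v\in A_{p_0}(\mu)$ and some fixed $p_0$, then
\[
\big\| \|\{g_j\}_j\|_{\ell^q} \big\|_{L^p(v\mu)} \lesssim \big\| \|\{f_j\}_j\|_{\ell^q} \big\|_{L^p(v\mu)}
\]
for all $p,q\in(1,\infty)$ and all $v\in A_p(\mu)$. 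Applying this with $(g,f)=(M^\mu h,h)$ — legitimate by the scalar input — gives the single-$\ell^q$ weighted vector-valued maximal inequality.

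To reach the double mixed norm I would iterate. Fix $s,t\in(1,\infty)$, apply the vector-valued inequality just obtained (with $q=s$) to the families $\{f^i_j\}_j$ for each fixed $i$, and set $G^i:=\|\{M^\mu f^i_j\}_j\|_{\ell^s}$ and $F^i:=\|\{f^i_j\}_j\|_{\ell^s}$; this shows $\|G^i\|_{L^p(v\mu)}\lesssim\|F^i\|_{L^p(v\mu)}$ for all $p\in(1,\infty)$ and $v\in A_p(\mu)$, with constant independent of $i$. Since extrapolation only sees pairs of functions and does not care that the pairs $(G^i,F^i)$ do not arise from an operator, feeding the family $\{(G^i,F^i)\}_i$ back into the displayed extrapolation with sequence space $\ell^t$ gives
\[
\big\| \|\{G^i\}_i\|_{\ell^t} \big\|_{L^p(w\mu)} \lesssim \big\| \|\{F^i\}_i\|_{\ell^t} \big\|_{L^p(w\mu)}, \qquad p\in(1,\infty),\ w\in A_p(\mu),
\]
which, on unravelling $G^i$ and $F^i$, is exactly the asserted estimate. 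The second (``in particular'') display is the case $w\equiv 1$. Throughout, a routine truncation and density reduction guarantees all quantities are genuine functions with finite norms so that the manipulations are justified.

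The main obstacle is the first step: establishing the scalar weighted bound $\|M^\mu f\|_{L^p(w\mu)}\lesssim\|f\|_{L^p(w\mu)}$ for \emph{product} weights $w\in A_p(\mu)$, together with checking that the (vector-valued) Rubio de Francia machinery of Section~\ref{app:app1} is genuinely unaffected by the change of measure. In the one-parameter situation the scalar bound is elementary, but in the product setting the relevant maximal function argument — unlike its one-parameter counterpart — truly uses an $A_\infty$/reverse-H\"older property; this is precisely the role of the hypothesis $\mu\in A_\infty$. Everything after that input is a formal, if somewhat bookkeeping-heavy, consequence of iterated extrapolation.
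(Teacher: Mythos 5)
Your overall architecture is the same as the paper's: prove a scalar two‑measure maximal bound, build a Rubio de Francia algorithm with respect to $\mu$, and then iterate the resulting extrapolation through the two sequence exponents $s$ and $t$ (using, when the Lebesgue exponent matches the sequence exponent, the elementary ``raise to the $s$th or $t$th power and sum'' diagonal step). That part is fine and matches the proof in Section~\ref{app:app1} essentially line for line.

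The gap is in your characterisation of the scalar input, which you correctly flag as the main obstacle but then dispatch too quickly. What Proposition~\ref{prop:vecvalmax} needs is the genuinely two‑measure statement
\[
\|M^\mu f\|_{L^p(w\mu)}\lesssim \|f\|_{L^p(w\mu)},\qquad \mu\in A_\infty,\ w\in A_p(\mu),
\]
and this is \emph{not} ``the exact analogue of Proposition~\ref{prop:prop1} with Lebesgue replaced by $\mu$,'' nor is it obtained by re‑running R.~Fefferman's covering argument with $\mu$-averages. Proposition~\ref{prop:prop1} is a one‑measure statement ($M^\lambda$ on $L^s(\lambda)$ for $\lambda\in A_p$); if you plug in $\lambda=w\mu$ you get a bound for $M^{w\mu}$, not for $M^\mu$. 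The paper's Lemma~\ref{lem:lem5} gets around this with a Lerner‑type pointwise domination
\[
\langle f\rangle_R^\mu \le [w]_{A_p(\mu)}^{1/(p-1)}\Big(M^{w\mu}\big([M^{\sigma\mu}(f\sigma^{-1})]^{p-1}w^{-1}\big)(x)\Big)^{1/(p-1)},\qquad \sigma=w^{-1/(p-1)},
\]
reducing the two‑measure bound to two applications of Proposition~\ref{prop:prop1} (for the weighted measures $w\mu$ and $\sigma\mu$). The non‑trivial verification is then that $w\mu$ and $\sigma\mu$ lie in $A_\infty$ when $\mu\in A_\infty$ and $w\in A_p(\mu)$ — this is done slice by slice using the $A_p(\mu)$ condition and the Lebesgue differentiation theorem, and is exactly where the hypothesis $\mu\in A_\infty$ does its work. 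So your instincts about what is needed and where $\mu\in A_\infty$ enters are right, but the route you sketch (``R.~Fefferman's argument goes through with $\mu$-averages'') is not what anyone would write down and is not obviously correct; you should instead bootstrap from the \emph{statement} of Proposition~\ref{prop:prop1} via the Lerner reduction. Everything after Lemma~\ref{lem:lem5} in your write‑up is sound and mirrors the paper.
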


Finally, we point out that everything in this section works easily in the general multi-parameter situation.

\section{Square functions}\label{sec:SquareFunctions}
Let $\calD = \calD^1 \times \calD^2$
be a fixed lattice of dyadic rectangles. We define the square functions
$$
S_{\calD} f = \Big( \sum_{R \in \calD}  |\Delta_R f|^2 \Big)^{1/2}, \,\, S_{\calD^1}^1 f =  \Big( \sum_{I^1 \in \calD^1}  |\Delta_{I^1}^1 f|^2 \Big)^{1/2}
$$
and define $S_{\calD^2}^2 f$ analogously.

The following lower square function estimate valid for $A_{\infty}$ weights is important for us. The importance
comes from the fact that by Lemma \ref{lem:lem1} some of the key weights $w^p$ and $w_i^{-p_i'}$ are
at least $A_{\infty}$ for the multilinear weights of Definition \ref{defn:defn1}. 
\begin{lem}\label{lem:lem3} There holds
$$
\|f\|_{L^p(w)} \lesssim \|S_{\calD^i}^i f\|_{L^p(w)} \lesssim  \|S_{\calD} f\|_{L^p(w)}
$$
for all $p \in (0, \infty)$ and bi-parameter weights $w \in A_{\infty}$.
\end{lem}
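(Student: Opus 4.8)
The plan is to prove the chain of inequalities
\[
\|f\|_{L^p(w)} \lesssim \|S_{\calD^i}^i f\|_{L^p(w)} \lesssim \|S_{\calD} f\|_{L^p(w)}, \qquad p \in (0,\infty),\ w \in A_\infty,
\]
by reducing the $m$-parameter statement to the known one-parameter weighted square function estimate applied in each variable separately, exploiting the tensor/iterated structure of $S_{\calD}$ and the fact (recorded near \eqref{eq:eq28}) that a bi-parameter $A_\infty$ weight restricts, for a.e.\ fixed value of one variable, to a one-parameter $A_\infty$ weight with uniformly controlled characteristic. The basic one-parameter ingredient I would invoke is the classical estimate: for $w \in A_\infty(\R^{d})$ and $0 < p < \infty$ one has $\|g\|_{L^p(w)} \sim \|S_{\calD}g\|_{L^p(w)}$, with constants depending only on $p$, $d$, and $[w]_{A_\infty}$. (For $p>1$ this is the weighted Littlewood--Paley inequality; for $p \le 1$ one uses the standard good-$\lambda$ / extrapolation-from-$A_\infty$ argument — this is a well-known fact and I would cite it rather than reprove it.)

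For the left inequality, take $i=1$ for concreteness. Apply the one-parameter lower estimate $\|h\|_{L^p(\lambda)} \lesssim \|S_{\calD^1}^1 h\|_{L^p(\lambda)}$ in the first variable with $h = f(\cdot, x_2)$ and $\lambda = w(\cdot, x_2)$, which lies in $A_\infty(\R^{d_1})$ with $[\lambda]_{A_\infty} \le [w]_{A_\infty}$ for a.e.\ $x_2$ by the restriction property. Integrating the resulting pointwise-in-$x_2$ inequality $\int_{\R^{d_1}} |f|^p w \ud x_1 \lesssim \int_{\R^{d_1}} (S_{\calD^1}^1 f)^p w \ud x_1$ over $x_2 \in \R^{d_2}$ (and over the remaining parameters in the general $m$-parameter case) gives $\|f\|_{L^p(w)} \lesssim \|S_{\calD^1}^1 f\|_{L^p(w)}$. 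One should first reduce to $f$ with finite Haar expansion so that all quantities are finite and Fubini is unproblematic, then remove this restriction by monotone convergence.

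For the right inequality, the key observation is that $S_{\calD}f$ is obtained from $f$ by applying the one-parameter square function in the \emph{second} parameter to the function $x_2 \mapsto (\Delta_{I^1}^1 f)(\cdot, x_2)$ and summing in $I^1$: indeed $|\Delta_R f|^2 = |\Delta_{I^2}^2(\Delta_{I^1}^1 f)|^2$, so $(S_{\calD}f)^2 = \sum_{I^1 \in \calD^1} (S_{\calD^2}^2(\Delta_{I^1}^1 f))^2$. Fix $x_1$; then in the $x_2$-variable I apply the one-parameter \emph{vector-valued} (i.e.\ $\ell^2$-valued) lower square function estimate to the sequence $(\Delta_{I^1}^1 f(x_1,\cdot))_{I^1}$ against the weight $w(x_1,\cdot) \in A_\infty(\R^{d_2})$, obtaining
\[
\int_{\R^{d_2}} \Big(\sum_{I^1} |\Delta_{I^1}^1 f|^2\Big)^{p/2} w \ud x_2 \lesssim \int_{\R^{d_2}} \Big(\sum_{I^1}\big(S_{\calD^2}^2 \Delta_{I^1}^1 f\big)^2\Big)^{p/2} w \ud x_2 = \int_{\R^{d_2}} (S_{\calD}f)^p w \ud x_2.
\]
Since $\big(\sum_{I^1}|\Delta_{I^1}^1 f|^2\big)^{1/2} = S_{\calD^1}^1 f$, integrating in $x_1$ yields $\|S_{\calD^1}^1 f\|_{L^p(w)} \lesssim \|S_{\calD}f\|_{L^p(w)}$. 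Symmetrically this handles $i=2$, and in the general $m$-parameter case one peels off the parameters one at a time by the same device.

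The main obstacle — and the only point requiring care — is having at hand the \emph{$\ell^2$-valued} one-parameter weighted square function lower bound for $A_\infty$ weights in the full range $0<p<\infty$, including $p \le 1$; the scalar version plus Fubini does not immediately suffice for the right-hand inequality because one must commute the $I^1$-summation past the $x_2$-integration. This vector-valued extension is standard (it follows from the scalar $A_\infty$ theory by the usual randomization/Khintchine argument for $p>1$ and by good-$\lambda$ for $p\le 1$, or can be cited from the literature on weighted Littlewood--Paley theory), so I would state it as a known fact and use it directly rather than developing it here.
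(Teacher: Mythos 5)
Your argument is correct, and it proves the same lemma, but via a slightly different route than the paper. The paper first establishes the estimate for the single exponent $p=2$ by iterating only the \emph{scalar} one-parameter lower square function bound of Wilson (here the $\ell^2/L^2$ interchange is free by Fubini), and then invokes the $A_\infty$-extrapolation principle of Cruz-Uribe--Martell--P\'erez (the paper's Lemma 5.4) to pass to all $p\in(0,\infty)$. You instead go directly for general $p$ by citing an $\ell^2$-valued one-parameter weighted lower square function estimate for $A_\infty$ weights and slicing in one variable at a time. Both are valid; your version pushes the work into a black-box vector-valued lemma, whereas the paper's version is more self-contained relative to the statements it has on hand. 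The two are in fact nearly the same argument, because the $\ell^2$-valued one-parameter $A_\infty$ estimate you want to cite is itself usually obtained from the scalar one by precisely the same $A_\infty$-extrapolation device; indeed the paper remarks at the end of its proof that vector-valued versions follow by further extrapolation. So the key substantive caveat is that your invocation of the vector-valued lemma in the full range $0<p<\infty$ is not a free triviality -- it is equivalent in difficulty to the $A_\infty$-extrapolation step the paper spells out -- and if you were to supply a proof of it, you would likely just reproduce that extrapolation. With that understood, there is no gap; your reduction is sound, including the trick of writing $S_\calD f$ as $S^2_{\calD^2}$ applied to the $\ell^2_{I^1}$-valued sequence $(\Delta^1_{I^1}f)_{I^1}$.
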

For a proof of the one-parameter estimate see \cite[Theorem 2.5]{Wi}. The bi-parameter results can be deduced using the following extremely useful $A_{\infty}$ extrapolation result \cite{CUMP}, which will be applied several times during the paper. We also mention that  square function estimates related to Lemma \ref{lem:lem3} also appear in \cite{BM3}.

\begin{lem}\label{lem:lem4}
Let $(f,g)$ be a pair of non-negative functions. Suppose that there exists some $0<p_0<\infty$ such that for every $w\in A_\infty$ we have
$$
\int f^{p_0} w \lesssim \int g^{p_0} w.
$$
Then for all $0<p<\infty$ and $w\in A_\infty$ we have
$$
\int f^{p} w \lesssim \int g^{p} w.
$$
\end{lem}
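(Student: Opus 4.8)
The plan is to recognise this as the $A_\infty$ extrapolation theorem of Cruz-Uribe--Martell--P\'erez \cite{CUMP} and to reproduce its proof. Since nothing in the argument uses any special geometry of the underlying differentiation basis, it is enough to run the standard proof with the collection of cubes replaced everywhere by the collection of rectangles; in particular the bi-parameter statement, and with the obvious changes the general $m$-parameter statement, follow. First I would reduce to $p_0=1$: replacing $(f,g)$ by $(f^{p_0},g^{p_0})$ turns the hypothesis into $\int F\,w\lesssim\int G\,w$ for all $w\in A_\infty$, where $F:=f^{p_0}$ and $G:=g^{p_0}$, and turns the desired conclusion at exponent $p$ into $\int F^q\,w\lesssim\int G^q\,w$ with $q:=p/p_0\in(0,\infty)$. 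A routine truncation (replace $F$ by $\min(F,N)\mathbf 1_{B(0,N)}\le F$, apply the hypothesis, and let $N\to\infty$ by monotone convergence), together with the fact that there is nothing to prove when $\int G^q\,w=\infty$, lets me assume all quantities finite and aim for constants depending only on $p_0,q$ and $[w]_{A_\infty}$.

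For $q\ge1$ (the case $q=1$ being the hypothesis itself) I would argue by duality and a Rubio de Francia iteration built from the \emph{weighted} dyadic maximal operator. Writing $\int F^q\,w=\|F\|_{L^q(w)}^q$ and dualising against $0\le h$ with $\|h\|_{L^{q'}(w)}\le1$, I note that $w\in A_\infty=\bigcup_r A_r$, so $w\in A_r$ for some $r$ and by Proposition \ref{prop:prop1} the operator $M_\calD^w$ is bounded on $L^{q'}(w)$. Setting $\mathcal Rh:=\sum_{k\ge0}(2^k\|M_\calD^w\|^k)^{-1}(M_\calD^w)^k h$ we get $h\le\mathcal Rh$, $\|\mathcal Rh\|_{L^{q'}(w)}\le2$, and $M_\calD^w(\mathcal Rh)\le2\|M_\calD^w\|\,\mathcal Rh$; a short computation with the characterisation $[v]_{A_\infty}=\sup_R\ave{v}_R\exp(\ave{\log v^{-1}}_R)$ shows that this last property, which says $\mathcal Rh$ is an $A_1$ weight with respect to $w\ud x$, forces $\mathcal Rh\cdot w\in A_\infty$ with constant controlled by $[w]_{A_\infty}$ and $\|M_\calD^w\|$. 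Applying the hypothesis to this $A_\infty$ weight and then H\"older's inequality,
\[
\int Fh\,w\le\int F\,\mathcal Rh\,w\lesssim\int G\,\mathcal Rh\,w\le\|G\|_{L^q(w)}\,\|\mathcal Rh\|_{L^{q'}(w)}\le2\|G\|_{L^q(w)},
\]
and taking the supremum over $h$ yields $\|F\|_{L^q(w)}\lesssim\|G\|_{L^q(w)}$, as wanted.

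The case $q<1$ is what I expect to be the main obstacle: the natural target space has exponent below $1$, where no maximal operator is bounded, so the clean duality argument is unavailable. Here I would instead manufacture the $A_\infty$ weight needed for the hypothesis directly out of $w$ and $G$, using H\"older's inequality to peel off a factor of the form $\big(\int F\varphi\big)^q$ — to which the hypothesis applies as soon as $\varphi\in A_\infty$ — and controlling the complementary factor. The weight $\varphi$ is built from $w$ and $G$ by a further Rubio de Francia type construction, exploiting the Coifman--Rochberg fact that a nonnegative power strictly below $1$ of a maximal function is an $A_1$ weight, together with the elementary observation that $w_0/v\in A_\infty$ whenever $v\in A_1$ and $w_0\in A_\infty$; I would follow \cite{CUMP} for the precise choice of $\varphi$ and the bookkeeping that closes the estimate. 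This completes the proof for all $q\in(0,\infty)$, hence, after undoing the reduction, for all $p\in(0,\infty)$.
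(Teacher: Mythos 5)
The paper gives no proof of this lemma---it is cited to \cite{CUMP} and used as a black box---so there is no internal argument to compare against, but your reconstruction can still be evaluated. Your identification of the result as the $A_\infty$-extrapolation theorem of Cruz-Uribe--Martell--P\'erez is correct, and so is the real outside input: that the rectangle family is a Muckenhoupt basis, i.e.\ that the weighted strong maximal operator $M_\calD^w$ is bounded on $L^s(w)$ for bi-parameter $w\in A_\infty$ (Proposition~\ref{prop:prop1}, R.~Fefferman). Your argument in the case $q\ge1$ is sound: the Rubio de Francia majorant $\mathcal Rh$ built from $M_\calD^w$ satisfies $M_\calD^w(\mathcal Rh)\lesssim \mathcal Rh$, and the deduction that $\mathcal Rh\cdot w\in A_\infty$ from this together with $w\in A_\infty$ is valid; it is established by the parameter-by-parameter slicing used in the proof of the paper's Lemma~\ref{lem:lem5}.

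The gap is in the case $q<1$, and it is not merely bookkeeping to be imported from \cite{CUMP}. You explicitly propose to invoke the Coifman--Rochberg theorem, that a power $(Mh)^\delta$, $\delta<1$, of a maximal function is an $A_1$ weight. The standard proof of that theorem runs through the weak type $(1,1)$ of the maximal operator (via Kolmogorov's inequality), and the strong maximal operator is famously \emph{not} of weak type $(1,1)$ (Jessen--Marcinkiewicz--Zygmund). Coifman--Rochberg is not a consequence of the Muckenhoupt-basis property and cannot simply be assumed for the rectangle basis; this is precisely where your opening claim that ``nothing in the argument uses any special geometry of the underlying differentiation basis'' fails. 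In \cite{CUMP}'s proof for general Muckenhoupt bases, the auxiliary $A_1$ weight needed in the downward case is manufactured by another Rubio de Francia iteration of the maximal operator---the same device you already used for $q\ge1$---and that route relies only on the $L^p(w)$ boundedness supplied by Proposition~\ref{prop:prop1}. With Coifman--Rochberg replaced by such an iteration the outline closes; as written, the $q<1$ step is unjustified.
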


\begin{proof}[Proof of Lemma \ref{lem:lem3}]
Let $w \in A_\infty$ be a bi-parameter weight. 
The first estimate in the statement follows from the one-parameter result \cite[Theorem 2.5]{Wi} and
the fact that  $w(x_1, \cdot) \in A_\infty(\R^{d_2})$ and $w(\cdot, x_2) \in A_\infty(\R^{d_1})$.
Using this, we have that
$$
\| f \|_{L^2(w)} \lesssim \| S^1_{\calD^1}f\|_{L^2(w)}
= \Big(\sum_{I^1 \in \calD^1} \| \Delta^1_{I^1}f\|_{L^2(w)}^2 \Big)^{\frac 12}.
$$
For each $I^1$ we again use the one-parameter estimate to get
$$
\| \Delta^1_{I^1}f\|_{L^2(w)}
\lesssim \|S^2_{\calD^2} \Delta^1_{I^1}f\|_{L^2(w)}
=\Big( \sum_{I^2 \in \calD^2}\|\Delta^2_{I^2} \Delta^1_{I^1}f\|_{L^2(w)}^2 \Big)^{\frac 12}.
$$
Since $\Delta^2_{I^2} \Delta^1_{I^1}f= \Delta_{I^1 \times I^2} f$, inserting the last estimate into the previous one shows
that
$$
\| f \|_{L^2(w)} \lesssim \Big( \sum_{I^1 \times I^2 \in \calD^1 \times \calD^2} 
\|\Delta_{I^1 \times I^2} f\|_{L^2(w)}^2 \Big)^{\frac 12}
= \| S_{\calD} f \|_{L^2(w)}.
$$
Since this holds for every bi-parameter weight $w \in A_\infty$, Lemma \ref{lem:lem4} concludes the proof.
We point out that with further extrapolation we could obtain vector-valued versions analogous to Proposition 
\ref{prop:vecvalmax}, see the end of Section \ref{app:app1}.
\end{proof}

\begin{rem}\label{rem:rem1}
We often use the lower square function estimate with the additional observation that we e.g. have for all $k = (k_1, k_2) \in \{0,1,\ldots\}^2$ that
$$
S_{\calD} f = \Big( \sum_{K = K^1 \times K^2 \in \calD}  |\Delta_{K,k} f|^2 \Big)^{1/2}, \qquad \Delta_{K,k} = \Delta_{K^1,k_1}^1 \Delta_{K^2, k_2}^2.
$$
This simply follows from disjointness.
\end{rem}

For $k= (k_1, k_2)$ we define the following family of $n$-linear square functions. First, we set
$$
A_1(f_1, \ldots, f_n) = A_{1,k}(f_1, \ldots, f_n)
= \Big( \sum_{K \in \calD}  \langle | \Delta_{K,k} f_1 | \rangle_K ^2 \prod_{j=2}^n \langle |f_j| \rangle_K^2 1_K \Big)^{\frac{1}{2}}.
$$
In addition, we understand this so that $A_{1,k}$ can also take any one of the symmetric forms, where each $\Delta_{K^i, k_i}^i$ appearing in
$\Delta_{K,k} = \Delta_{K^1,k_1}^1 \Delta_{K^2, k_2}^2$ can alternatively be associated with any of the other functions $f_2, \ldots, f_n$. That is,
$A_{1,k}$ can, for example, also take the form
$$
A_{1,k}(f_1, \dots, f_n) = 
\Big( \sum_{K \in \calD} \langle | \Delta^2_{K^2,k_2} f_1 | \rangle_K^2
\langle | \Delta^1_{K^1,k_1} f_2| \rangle_K^2 \prod_{j=3}^{n} \langle |f_j| \rangle_K^2 1_K \Big)^{\frac 12}.
$$
For $k = (k_1, k_2, k_3)$ we define
\begin{equation}\label{eq:eq11}
\begin{split}
&A_{2,k}(f_1, \ldots, f_n) \\
&= \Big( \sum_{K^2 \in \calD^2} \Big( \sum_{K^1 \in \calD^1}
\langle|\Delta^2_{K^2, k_1}f_1|\rangle_{K}\langle|\Delta^1_{K^1, k_2}f_2|\rangle_{K}
\langle|\Delta^1_{K^1, k_3}f_3|\rangle_{K} \prod_{j=4}^{n} \langle |f_j|\rangle_K1_{K} 
\Big)^2\Big)^{ \frac 12},
\end{split}
\end{equation}
where we again understand this as a family of square functions. First, the appearing three martingale blocks can be 
associated with different functions, too. Second, we can have the $K^1$ summation out and the $K^2$ summation in (we can interchange them), but then
we have two martingale blocks with $K^2$ and one martingale block with $K^1$.

Finally, for $k = (k_1, k_2, k_3, k_4)$ we define
$$
A_{3,k}(f_1, \ldots, f_n) =  \sum_{K \in \calD} \langle | \Delta_{K,(k_1, k_2)} f_1| \rangle_K
\langle | \Delta_{K,(k_3, k_4)} f_2| \rangle_K \prod_{j=3}^n \langle |f_j| \rangle_K 1_K,
$$
where this is a family with two martingale blocks in each parameter, which can be moved around.
\begin{thm}\label{thm:thm3}
If $1 < p_1, \ldots, p_n \le \infty$ and $\frac{1}{p} = \sum_{i=1}^n \frac{1}{p_i}> 0$ we have
$$
\|A_{j,k}(f_1, \ldots, f_n)w \|_{L^p} \lesssim \prod_{i=1}^n \|f_i w_i\|_{L^{p_i}}, \quad j=1,2,3,
$$
for all multilinear bi-parameter weights $\vec w \in A_{\vec p}$.
\end{thm}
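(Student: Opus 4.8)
The plan is to reduce everything, via the multilinear extrapolation theorem (Theorem~\ref{thm:ext}), to a single conveniently chosen tuple of exponents with \emph{equal} and finite entries, and then to run, for each $j$, a direct argument built around the joint $A_{\vec p}$ condition, the weighted maximal function estimates (Propositions~\ref{prop:prop1} and~\ref{prop:vecvalmax}), and the lower square function estimate (Lemma~\ref{lem:lem3}). In detail: for fixed $j\in\{1,2,3\}$ it suffices to prove the inequality for a single tuple $\vec p=(P,\dots,P)$, since Theorem~\ref{thm:ext} then supplies it for every admissible $\vec q$ --- in particular for exponents equal to $\infty$ and for $p<1$. I would take $P=n$ (so $p=1$) for the $\ell^1$-type object $A_{3,k}$ and $P=2n$ (so $p=2$) for the $\ell^2$-type objects $A_{1,k}$ and $A_{2,k}$. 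Fix such a tuple, set $\sigma_i:=w_i^{-p_i'}$ and $G_i:=|f_i|\sigma_i^{-1}$ (so that $\|G_i\|_{L^{p_i}(\sigma_i)}=\|f_iw_i\|_{L^{p_i}}$), and recall from Lemma~\ref{lem:lem1} that $\sigma_i\in A_{np_i'}$ and $w^p\in A_{np}$. The decisive structural fact is that these are \emph{genuine} Muckenhoupt weights, to which Propositions~\ref{prop:prop1},~\ref{prop:vecvalmax} and Lemma~\ref{lem:lem3} apply freely --- whereas a bound of the shape $\|S_\calD f_i\cdot w_i\|_{L^{p_i}}\lesssim\|f_iw_i\|_{L^{p_i}}$ is \emph{not} available, because $w_i^{p_i}\notin A_{p_i}$ in general. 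Circumventing this is the difficulty that forces one to keep all $n$ functions coupled throughout.

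Consider $A_{3,k}$ as the model case. Expanding $\|A_{3,k}(\vec f)w\|_{L^1}=\int A_{3,k}(\vec f)\,w$ reduces the estimate to a bound for a sum $\sum_{K\in\calD}\big(\prod_{i=1}^n(\text{average over }K)_i\big)w(K)$ over dyadic rectangles. In every non-cancellative slot write $\ave{|f_j|}_K=\ave{\sigma_j}_K\ave{G_j}_K^{\sigma_j}$, and in every cancellative slot $\ave{|\Delta_Kf_i|}_K=\ave{\sigma_i}_K\ave{|\Delta_Kf_i|\sigma_i^{-1}}_K^{\sigma_i}$ (where $\Delta_K$ denotes the martingale block attached to $f_i$). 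Factoring out $\prod_i\ave{\sigma_i}_K$ and invoking the joint $A_{\vec p}$ condition as $\prod_i\ave{\sigma_i}_K\le[\vec w]_{A_{\vec p}}^{P'}\ave{w^p}_K^{-P'/p}$ leaves a negative power of $\ave{w^p}_K$; the equality of all the $p_i$ is exactly what makes this power recombine with $w(K)=\ave{w^p}_K|K|$ --- as in the weight manipulations in the proof of Lemma~\ref{lem:lem1} --- to produce an averaging structure against $w^p$ that is absorbed by the weighted maximal operator $M_\calD^{w^p}$ (Proposition~\ref{prop:prop1}, legitimate since $w^p\in A_{np}$). The remaining inner averages $\ave{G_j}_K^{\sigma_j}$ are dominated by $M_\calD^{\sigma_j}G_j$; were there no cancellative slots, Hölder's inequality and the $L^{p_j}(\sigma_j)$-boundedness of $M_\calD^{\sigma_j}$ (Proposition~\ref{prop:prop1}) would already yield $\prod_j\|G_j\|_{L^{p_j}(\sigma_j)}=\prod_j\|f_jw_j\|_{L^{p_j}}$. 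This non-cancellative skeleton is essentially the maximal function argument of Proposition~\ref{prop:prop2}.

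The cancellative slots carry the real content. Here one must \emph{not} simply dominate $\ave{|\Delta_Kf_i|\sigma_i^{-1}}_K^{\sigma_i}$ by a maximal function and collapse the $K$-sum: that would destroy the cancellation and leave one facing the unavailable bound $\|S_\calD f_i\cdot w_i\|_{L^{p_i}}\lesssim\|f_iw_i\|_{L^{p_i}}$. Instead each martingale block is peeled off one at a time --- a duality step transfers a block onto a test function, which carries one of the genuine weights ($w^p$ or a $\sigma_i$), and there the lower square function estimate, Lemma~\ref{lem:lem3}, together with the $A_\infty$-extrapolation Lemma~\ref{lem:lem4}, puts it back into manageable form, all the while keeping the remaining slots inside a (nested) vector-valued envelope so that the vector-valued weighted maximal inequality, Proposition~\ref{prop:vecvalmax}, can be applied at the end with the correct base measures. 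One iterates until no cancellation remains, reducing to the non-cancellative skeleton above. For $A_{1,k}$ the same scheme runs after expanding $\|A_{1,k}(\vec f)w\|_{L^2}^2=\sum_{K}(\cdots)^2w^p(K)$ (each term of the square function being supported on $K$), the extra point being that when the two blocks lie on different functions the sum over $K=K^1\times K^2$ must be organised as a nested $\ell^2$/$\ell^2$ sum, for which the two-sequence-space form of Proposition~\ref{prop:vecvalmax} is tailored; for $A_{2,k}$ one first dualises the outer $\ell^2$ against an $\ell^2$-valued sequence bounded in $L^{p'}(w^p)$ and then treats the inner $\ell^1$-sum exactly as for $A_{3,k}$. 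I expect the main obstacle to be twofold: first, the genuinely multilinear weights --- the estimate does not decouple into one bound per function, so the joint $A_{\vec p}$ condition must be used to recombine $\ave{w^p}_K$ and all the $\ave{\sigma_i}_K$ simultaneously, and the square/maximal function tools may only ever be invoked against genuine weights; and second, the combinatorial bookkeeping in $A_{2,k}$ and $A_{3,k}$, whose several martingale blocks are distributed over different functions and different parameters and must be removed one block at a time without breaking the vector-valued structure that Proposition~\ref{prop:vecvalmax} requires.
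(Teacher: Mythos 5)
Your proposal has the right ingredients and the right overall iteration philosophy --- peel cancellative blocks one at a time by duality plus the lower square function estimate (Lemma~\ref{lem:lem3}), then land on a weighted maximal function bound --- but it diverges from the paper's proof on a structural point that matters, and one of the deviations is a genuine gap.

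The paper \emph{never} performs the LOPTT-style factoring $\ave{|f_j|}_K = \ave{\sigma_j}_K\,\ave{G_j}_K^{\sigma_j}$, nor does it diagonalise into separate linear maximal functions $M_\calD^{\sigma_j}$. Instead, for every $j\in\{1,2,3\}$ it (i) fixes \emph{any} tuple with $1<p_i<\infty$ and $p>1$ (extrapolation, Theorem~\ref{thm:ext}, handles the rest of the range, including your endpoints); (ii) immediately dualises the \emph{entire} $L^p$-norm of $A_{j,k}(\vec f)\,w$ against a test object normalised in $L^{p'}$ (this is where $p>1$ is used); (iii) repeatedly writes $\langle|\Delta^i_{K,\cdot}f_j|,1_K\rangle=\langle f_j,\Delta^i_{K,\cdot}\varphi\rangle$ with $|\varphi|\le 1_K$, extracts $\|f_jw_j\|_{L^{p_j}}$ by H\"older, and applies the lower square function Lemma~\ref{lem:lem3} in $L^{p_j'}$ with the genuine $A_\infty$ weight $w_j^{-p_j'}$ (Lemma~\ref{lem:lem1}); and (iv) once no cancellation remains, bounds the residual pointwise by the genuinely multilinear $M_\calD$ with a transformed argument tuple, verifies via Lemma~\ref{lem:lem7} that the transformed weight tuple $(w^{-1},w_1,w_2,w_4,\ldots,w_n)$ is in $A_{(p',p_1,p_2,p_4,\ldots,p_n)}$, and invokes the vector-valued multilinear bound coming from Proposition~\ref{prop:prop2} together with \eqref{extrapol:vv*}. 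There is no $\sigma$-splitting anywhere in this chain.

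Two concrete problems with your route. First, your choice $p=1$ for $A_{3,k}$ blocks both the initial whole-norm duality (you would be testing against $L^\infty$, and $\esssup w$ is not controlled) and the application of Lemma~\ref{lem:lem7}, whose hypotheses require $p\in(1,\infty)$. The paper takes $p>1$ for $A_{3,k}$ as well, reducing it to the $A_1$/$A_2$ cases by one more peel. Second and more seriously: after your factoring step the cancellative slot is of the form $\ave{|\Delta_Kf_i|\sigma_i^{-1}}_K^{\sigma_i}$, an average \emph{with respect to $\sigma_i$}, and the product of all these, together with $\ave{w^p}_K^{-P'/p}w^p(K)$, has already been rearranged into nested maximal averages. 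At that point the block you want to peel no longer sits in a pairing with an explicit $w_j^{-1}$ factor, so there is no $L^{p_j'}(\sigma_j)$ (with $\sigma_j\in A_\infty$) in which to apply Lemma~\ref{lem:lem3}. You correctly flag that one must not crush the cancellation into a maximal function, but the alternative you describe (``a duality step transfers a block onto a test function which carries one of the genuine weights'') is not substantiated in a form that is compatible with having already diagonalised; the order of operations is precisely what the paper arranges carefully, and your sketch inverts it. The LOPTT skeleton you describe is essentially a longer, alternative proof of Proposition~\ref{prop:prop2} (the paper's actual proof of that proposition is the much shorter $L^\infty$-endpoint plus extrapolation argument), and extending that skeleton to accommodate cancellative blocks is not a straightforward add-on.
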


\begin{proof}
The proofs of all of the cases have the same underlying idea based on an iterative use of duality and the lower square function estimate
until all of the cancellation has been utilised. One can also realise that the result for $A_{3,k}$ follows using the above
scheme just once if the result is first proved for $A_{1,k}$ and $A_{2,k}$.

We show the proof for $A_{2,k}$ with the explicit form \eqref{eq:eq11}.
Fix some $\vec p = (p_1, \ldots, p_n)$ with $1 < p_i < \infty$ and $p > 1$. This is enough by extrapolation, Theorem \ref{thm:ext}.
To estimate
$\|A_{2,k}(f_1, \ldots, f_n)w \|_{L^p}$ we take a sequence $(f_{n+1,K^2})_{K^2} \subset L^{p'}(\ell^2)$ with a norm $\| (f_{n+1,K^2})_{K^2} \|_{L^{p'}(\ell^2)} \le 1$
and look at
\begin{equation}\label{eq:A1Dual}
\sum_{K}\langle|\Delta^2_{K^2, k_1}f_1|,1_K\rangle\langle|\Delta^1_{K^1, k_2}f_2|\rangle_{K}
\langle|\Delta^1_{K^1, k_3}f_3|\rangle_{K} \prod_{j=4}^{n} \langle |f_j|\rangle_K \langle f_{n+1,K^2}w \rangle_{K}.
\end{equation}
There holds that
\begin{equation}\label{eq:RemAbs}
\langle|\Delta^2_{K^2, k_1}f_1|,1_K\rangle = \langle  \Delta^2_{K^2, k_1}f_1  ,  \varphi_{K^2, f_1} \rangle 
= \langle   f_1  , \Delta^2_{K^2, k_1} \varphi_{K^2, f_1} \rangle, \qquad |\varphi_{K^2, f_1}| \le 1_K.
\end{equation}
We now get that \eqref{eq:A1Dual} is less than $\| f_1 w_1\|_{L^{p_1}}$ multiplied by
\begin{equation*}
\Big \| \sum_{K} \langle f_{n+1,K^2}w \rangle_{K} \langle|\Delta^1_{K^1, k_2}f_2|\rangle_{K}
\langle|\Delta^1_{K^1, k_3}f_3|\rangle_{K} \prod_{j=4}^{n} \langle |f_j|\rangle_K  \Delta^2_{K^2, k_1} \varphi_{K^2, f_1} w_1^{-1}\Big \|_{L^{p_1'}}.
\end{equation*}

We will now apply the lower square function estimate $\|gw_1^{-1}\|_{L^{p_1'}} \lesssim \|S_{\calD^2}^2(g) w_1^{-1} \|_{L^{p_1'}}$, Lemma \ref{lem:lem3},
with the weight $w_1^{-p_1'} \in A_{\infty}$ (see Lemma \ref{lem:lem1}). Here we use the block form of Remark \ref{rem:rem1}.
Using also that $|\Delta^2_{K^2, k_1} \varphi_{K^2, f_1}| \lesssim 1_K$
we get that the last norm is dominated by
\begin{equation*}
\Big \|\Big( \sum_{K^2} \Big( \sum_{K^1} \langle |f_{n+1,K^2}|w \rangle_{K} \langle|\Delta^1_{K^1, k_2}f_2|\rangle_{K}
\langle|\Delta^1_{K^1, k_3}f_3|\rangle_{K} \prod_{j=4}^{n} \langle |f_j|\rangle_K  
1_K \Big)^2 \Big)^{\frac 12} w_1^{-1}\Big \|_{L^{p_1'}}.
\end{equation*}
We still have cancellation to use in the form of the other two martingale differences and will continue the process.

We repeat the argument from above -- this gives that the previous term is dominated by $\| f_2 w_2 \|_{L^{p_2}}$ multiplied by
$$
\Big \|\Big( \sum_{K^1} \Big( \sum_{K^2} \langle |f_{n+1,K^2}|w \rangle_{K} \langle |f_{1,K^2}|w_1^{-1}\rangle_K
\langle|\Delta^1_{K^1, k_3}f_3|\rangle_{K} \prod_{j=4}^{n} \langle |f_j|\rangle_K  
1_K \Big)^2 \Big)^{\frac 12} w_2^{-1}\Big \|_{L^{p_2'}}
$$
where $\| (f_{1,K^2})_{K^2} \|_{L^{p_1}(\ell^2)} \le 1$. Running this argument one more time finally gives us
that this is dominated by $\| f_3w_3\|_{L^{p_3}}$ multiplied by
\begin{equation*}
\begin{split}
\Big \| \Big(\sum_{K^1} & \Big( \sum_{K^2} \langle |f_{n+1,K^2}|w \rangle_{K}  \langle |f_{1,K^2}|w_1^{-1}\rangle_K \langle |f_{2, K^1}|w_2^{-1}\rangle_K 
\prod_{j=4}^{n} \langle |f_j|\rangle_K 1_K \Big)^2 \Big)^{\frac 12} w_3^{-1}\Big\|_{L^{p_3'}} \\
& \le \Big \| \Big(\sum_{K^1} \Big( \sum_{K^2} M_\calD( f_{n+1,K^2}w, f_{1,K^2}w_1^{-1}, f_{2, K^1}w_2^{-1},  f_4, \dots, f_n) \Big)^2 \Big)^{\frac 12} w_3^{-1}\Big\|_{L^{p_3'}},
\end{split}
\end{equation*}
where $\| (f_{2,K^1})_{K^1} \|_{L^{p_2}(\ell^2)} \le 1$.

Using Lemma  \ref{lem:lem7} three times (we dualized three times) shows that
$$
(w^{-1}, w_1,w_2,w_4, \dots, w_n) \in A_{(p',p_1,p_2,p_4, \dots, p_n)}.
$$
The maximal function satisfies the weighted
$$
L^{p'}(\ell^\infty_{K^1}(\ell^2_{K^2})) \times L^{p_1}(\ell^\infty_{K^1}(\ell^2_{K^2})) \times L^{p_2}(\ell^2_{K^1}(\ell^\infty_{K^2})) 
\times L^{p_4} \times \dots \times L^{p_n} \to L^{p_3'}(\ell^2_{K^1}(\ell^1_{K^2}))
$$
estimate. This gives that the last norm above is dominated by
\begin{equation*}
\| ( f_{n+1,K^2}ww^{-1})_{K^2} \|_{L^{p'}(\ell^2)}
\| ( f_{1,K^2}w_1^{-1}w_1)_{K^2} \|_{L^{p_1}(\ell^2)}
\| ( f_{2, K^1}w_2^{-1}w_2)_{K^1} \|_{L^{p_2}(\ell^2)}
\prod_{i=4}^n \| f_iw_i \|_{L^{p_i}},
\end{equation*}
where the first three norms are $\le 1$. This concludes the proof for $A_{2,k}$ and the rest of the cases are similar.
\end{proof}
We also record some linear estimates. We will need these when we deal
with the most complicated model operators -- the partial paraproducts.
\begin{prop}\label{prop:prop3}
For $u \in A_{\infty}$ and $p, s \in (1, \infty)$ we have
$$
\Big\| \Big[ \sum_m \Big( \sum_{K \in \calD} \langle |\Delta_{K,k} f_m| \rangle_K^2 \frac{1_K}{\langle u \rangle_K^2} \Big)^{\frac{s}{2}} \Big]^{\frac{1}{s}} u^{\frac{1}{p}}
\Big\|_{L^p}
\lesssim \Big\| \Big( \sum_m |f_m|^s \Big)^{\frac{1}{s}} u^{-\frac{1}{p'}} \Big\|_{L^p}.
$$
\end{prop}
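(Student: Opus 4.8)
In the spirit of the proof of Theorem~\ref{thm:thm3}, the plan is to dualize the estimate so that the only square function bounds we invoke are the \emph{lower} ones of Lemma~\ref{lem:lem3} (valid for every $A_\infty$ weight) and the only maximal bounds are the weighted ones of Proposition~\ref{prop:vecvalmax} taken with underlying measure $\mu=u$. This is the natural move because a direct attack would require the \emph{upper} square function estimate for $\big(\sum_m|f_m|^s\big)^{1/s}$ against the weight $u^{-p/p'}$, and $u^{-p/p'}$ need not lie in $A_\infty$ when $u\in A_\infty$; duality turns this obstruction into a lower square function estimate against $u$ itself, which is harmless. Throughout we use the vector-valued versions of Lemma~\ref{lem:lem3} and Proposition~\ref{prop:vecvalmax}, which come out of the same extrapolation (see the end of Section~\ref{app:app1}).

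First I would rewrite $\ave{|\Delta_{K,k}f_m|}_K\frac{1_K}{\ave{u}_K}=\frac{1}{u(K)}\big(\int_K|\Delta_{K,k}f_m|\big)1_K$ and dualize the left side as an $L^p$-norm of a sequence in $\ell^s_m(\ell^2_K)$: writing $F$ for the inner function,
\[
\|Fu^{1/p}\|_{L^p}=\sup\sum_m\sum_K\frac{1}{u(K)}\Big(\int_K|\Delta_{K,k}f_m|\Big)\Big(\int_K u^{1/p}c_m^K\Big),
\]
the supremum over $c_m^K\ge 0$ with $\big\|\big(\sum_m(\sum_K|c_m^K|^2)^{s'/2}\big)^{1/s'}\big\|_{L^{p'}}\le 1$. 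Next I would remove the absolute value from $\Delta_{K,k}f_m$: with $\varepsilon_{m,K}=\sign(\Delta_{K,k}f_m)$, which satisfies $|\varepsilon_{m,K}|\le 1_K$, and using that $\Delta_{K,k}$ is self-adjoint,
\[
\int_K|\Delta_{K,k}f_m|=\langle\Delta_{K,k}f_m,\varepsilon_{m,K}\rangle=\langle f_m,\Delta_{K,k}\varepsilon_{m,K}\rangle .
\]
Hence the quantity to bound equals $\sum_m\langle f_m,\Phi_m\rangle$, where $\Phi_m:=\sum_K d_{m,K}\Delta_{K,k}\varepsilon_{m,K}$ and $d_{m,K}:=\frac{1}{u(K)}\int_K u^{1/p}c_m^K=\ave{u^{-1/p'}c_m^K}^u_K$. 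By H\"older's inequality in $m$ and in the space variable,
\[
\sum_m\langle f_m,\Phi_m\rangle\le\Big\|\Big(\sum_m|f_m|^s\Big)^{1/s}u^{-1/p'}\Big\|_{L^p}\cdot\Big\|\Big(\sum_m|\Phi_m|^{s'}\Big)^{1/s'}u^{1/p'}\Big\|_{L^{p'}},
\]
so it remains to bound the second factor by an absolute constant.

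Here the cancellation now lives in $\Phi_m$. Since $\Delta_{K,k}\Delta_{K',k}=\delta_{K,K'}\Delta_{K,k}$ and $|\Delta_{K,k}g|\lesssim\|g\|_{L^\infty}1_K$, we have $\Delta_{K,k}\Phi_m=d_{m,K}\Delta_{K,k}\varepsilon_{m,K}$ and therefore, by Remark~\ref{rem:rem1},
\[
S_{\calD}\Phi_m=\Big(\sum_K|\Delta_{K,k}\Phi_m|^2\Big)^{1/2}\lesssim\Big(\sum_K|d_{m,K}|^2 1_K\Big)^{1/2}.
\]
Now successively applying the vector-valued lower square function estimate of Lemma~\ref{lem:lem3} with the weight $u\in A_\infty$, the bound just displayed, the pointwise estimate $d_{m,K}1_K=\ave{u^{-1/p'}c_m^K}^u_K\,1_K\le M_{\calD}^u(u^{-1/p'}c_m^K)$, Proposition~\ref{prop:vecvalmax} with $\mu=u$, $w\equiv 1$ and sequence spaces $\ell^2$ over $K$ and $\ell^{s'}$ over $m$, and finally $\|g\,u^{-1/p'}\|_{L^{p'}(u)}=\|g\|_{L^{p'}}$, one obtains
\[
\Big\|\Big(\sum_m|\Phi_m|^{s'}\Big)^{1/s'}u^{1/p'}\Big\|_{L^{p'}}\lesssim\Big\|\Big(\sum_m\Big(\sum_K|c_m^K|^2\Big)^{s'/2}\Big)^{1/s'}\Big\|_{L^{p'}}\le 1 .
\]
Taking the supremum over $(c_m^K)$ completes the bi-parameter case, and the general multi-parameter case is identical once each ingredient is replaced by its $m$-parameter counterpart.

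The step I expect to be the main obstacle is the bookkeeping in the previous paragraph: one must check that the dualization genuinely leaves all the cancellation on the auxiliary functions $\Phi_m$, so that only the lower square function estimate (available for $A_\infty$) and the $u$-weighted maximal estimates are needed, and one must have on hand the vector-valued versions of Lemma~\ref{lem:lem3} and Proposition~\ref{prop:vecvalmax}. These vector-valued strengthenings are routine outputs of the extrapolation machinery already developed, but they are exactly what makes the bare hypothesis $u\in A_\infty$ (rather than $u\in A_p$) suffice.
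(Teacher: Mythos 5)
Your proposal is correct and cleanly executed, but it takes the alternative route that the paper only mentions in passing (the remark immediately after the proposition: ``It is possible to prove the above proposition also directly with the duality and lower square function strategy that was used in the proof of Theorem \ref{thm:thm3}''). You carry out exactly that direct duality argument: dualize in $L^p(\ell^s_m(\ell^2_K))$, strip the absolute value with the stopping functions $\varepsilon_{m,K}$ so the martingale-difference cancellation migrates onto $\Phi_m$, apply the vector-valued lower square function estimate with the $A_\infty$ weight $u$, and close with $M^u_\calD$ via Proposition~\ref{prop:vecvalmax}. The paper's actual proof is different and shorter: it inserts $1\le\langle u\rangle_K\langle u^{-1/(n-1)}\rangle_K^{n-1}$, recognises the resulting expression as $A_{1,k}\big(f_m,u^{-1/(n-1)},\ldots,u^{-1/(n-1)}\big)$, observes that for $n$ large enough with $u\in A_n$ one has $\big(u^{-1/p'},u^{1/(n-1)},\ldots\big)\in A_{(p,\infty,\ldots,\infty)}$, and then simply quotes the already-established weighted vector-valued bound for $A_{1,k}$ from Theorem~\ref{thm:thm3} and \eqref{extrapol:vv*}. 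What each route buys: the paper's proof is modular and leans on the genuinely multilinear weight class, so it costs only an algebraic manipulation once Theorem~\ref{thm:thm3} is in hand; your proof avoids the auxiliary weight $u^{1/(n-1)}$ and the $A_{(p,\infty,\ldots,\infty)}$ verification, and makes transparent exactly where $u\in A_\infty$ is used (the lower square function bound) at the price of redoing the dualization/cancellation bookkeeping by hand and invoking a vector-valued form of Lemma~\ref{lem:lem3}, which the paper acknowledges as available via extrapolation but does not state explicitly. Both are valid; your version is closer in spirit to the proof of Theorem~\ref{thm:thm3} itself.
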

\begin{proof}
By \eqref{eq:eq8} we have for all $n \ge 2$ that
$$
1 \le \langle u \rangle_K \Big\langle u^{-\frac{1}{n-1}} \Big\rangle_K^{n-1}.
$$
Simply using this we reduce to
\begin{align*}
\Big\| \Big[& \sum_m \Big( \sum_{K \in \calD} \langle |\Delta_{K,k} f_m| \rangle_K^2 \Big\langle u^{-\frac{1}{n-1}} \Big\rangle_K^{2(n-1)}1_K\Big)^{\frac{s}{2}} \Big]^{\frac{1}{s}} u^{\frac{1}{p}}
\Big\|_{L^p} \\
&= \Big\| \Big[ \sum_m A_{1,k}\big(f_m, u^{-\frac{1}{n-1}}, \ldots, u^{-\frac{1}{n-1}}\big)^s  \Big]^{\frac{1}{s}} u^{\frac{1}{p}}
\Big\|_{L^p},
\end{align*}
where $A_{1,k}$ is a suitable square function as in Theorem \ref{thm:thm3}.

We then fix $n$ large enough so that $u \in A_{n}$.
We then notice that this implies that
\begin{equation}\label{eq:InftynLin}
 \big(u^{-\frac{1}{p'}}, u^{\frac{1}{n-1}}, \ldots, u^{\frac{1}{n-1}}\big) \in A_{(p, \infty, \ldots, \infty)}.
 \end{equation}
To see this, notice that the target weight associated with this tuple is $u^{-\frac{1}{p'}} u = u^{\frac{1}{p}}$ and that the target exponent is $p$, and so
$$
\big[\big(u^{-\frac{1}{p'}}, u^{\frac{1}{n-1}}, \ldots, u^{\frac{1}{n-1}}\big)\big]_{A_{(p, \infty, \ldots, \infty)}} =
\sup_R \langle u \rangle_R^{1/p} \langle u \rangle_R^{1/p'} \big\langle  u^{-\frac{1}{n-1}} \big\rangle_R^{n-1} = [u]_{A_n} < \infty.
$$

It remains to use the weighted (with the weight \eqref{eq:InftynLin}) vector-valued estimate $L^p(\ell^s) \times L^{\infty}\times \cdots \times L^{\infty} \to L^p(\ell^s)$ of $A_{1,k}$, which follows by Theorem \ref{thm:thm3} and \eqref{extrapol:vv*}. 
\end{proof}
\begin{rem}
It is possible to prove the above proposition also directly with the duality and lower square function strategy that was used in the proof of Theorem \ref{thm:thm3}.
\end{rem}


\section{Dyadic model operators}\label{sec:dmo}
In this section we are working with a fixed set of dyadic rectangles $\calD = \calD^1 \times \calD^2$. All the model operators depend on this lattice,
but it is not emphasised in the notation. 
\subsection{Shifts}
Let $k=(k_1, \dots, k_{n+1})$, where $k_j = (k_j^1, k_j^2) \in \{0,1,\ldots\}^2$. 
An $n$-linear bi-parameter shift $S_k$ takes the form
\begin{equation*}\label{eq:S2par}
\langle S_k(f_1, \ldots, f_n), f_{n+1}\rangle = \sum_{K} \sum_{\substack{R_1, \ldots, R_{n+1} \\ R_j^{(k_j)} = K }}
a_{K, (R_j)} \prod_{j=1}^{n+1} \langle f_j, \wt h_{R_j} \rangle.
\end{equation*}
Here $K, R_1, \ldots, R_{n+1} \in \calD = \calD^1 \times \calD^2$, $R_j = I_j^1 \times I_j^2$, $R_j^{(k_j)} := (I_j^1)^{(k_j^1)} \times (I_j^2)^{(k_j^2)}$ and 
$\wt h_{R_j} = \wt h_{I_j^1} \otimes \wt h_{I_j^2}$. Here we assume that for $m \in \{1,2\}$
there exist two indices $j^m_0,j_1^m \in \{1, \ldots, n+1\}$, $j^m_0 \not =j^m_1$, so that $\wt h_{I_{j^m_0}^m}=h_{I_{j^m_0}^m}$, $\wt h_{I_{j^m_1}^m}=h_{I_{j^m_1}^m}$ and for the remaining indices $j \not \in \{j^m_0, j^m_1\}$ we have $\wt h_{I_j^m} \in \{h_{I_j^m}^0, h_{I_j^m}\}$.
Moreover, $a_{K,(R_j)} = a_{K, R_1, \ldots ,R_{n+1}}$ is a scalar satisfying the normalization
\begin{equation}\label{eq:Snorm2par}
|a_{K,(R_j)}| \le \frac{\prod_{j=1}^{n+1} |R_j|^{1/2}}{|K|^{n}}.
\end{equation}

\begin{thm}
Suppose $S_k$ is an $n$-linear bi-parameter shift, $1 < p_1, \ldots, p_n \le \infty$ and $\frac{1}{p} = \sum_{i=1}^n \frac{1}{p_i}> 0$. Then we have
$$
\|S_k(f_1, \ldots, f_n)w \|_{L^p} \lesssim \prod_{i=1}^n \|f_i w_i\|_{L^{p_i}} 
$$
for all multilinear bi-parameter weights $\vec w \in A_{\vec p}$. The implicit constant does not depend on $k$.
\end{thm}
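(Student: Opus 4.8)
By the extrapolation theorem, Theorem~\ref{thm:ext}, it suffices to prove the bound for a single tuple of exponents in the Banach range, so I fix $1<p_1,\dots,p_n<\infty$ with $p>1$ and dualize: it is enough to control $|\langle S_k(f_1,\dots,f_n),f_{n+1}\rangle|$ by $\prod_{i=1}^n\|f_iw_i\|_{L^{p_i}}$ whenever $\|f_{n+1}w^{-1}\|_{L^{p'}}\le 1$. After expanding $S_k$ and inserting the normalization \eqref{eq:Snorm2par}, I bound each Haar coefficient in one of two ways: trivially, $|\langle f_j,\wt h_{R_j}\rangle|\le|R_j|^{-1/2}\int_{R_j}|g_j^{R_j}|$ with $g_j^{R_j}=|f_j|$ when $\wt h_{I_j^m}$ is non-cancellative (or when I decide not to spend its cancellation), or, via \eqref{eq:HaarMart}, as $\langle\Delta^m_{I_j^m}f_j,\wt h_{R_j}\rangle$, in which case $g_j^{R_j}$ is a one- or two-fold martingale block of $f_j$. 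The decisive point for the claimed $k$-independence is that all sums over descendants $R_j^{(k_j)}=K$ are run in $\ell^1$: $\sum_{R_j^{(k_j)}=K}\int_{R_j}|g_j^{R_j}|$ telescopes to $|K|\langle(\cdot)_j\rangle_K$, where $(\cdot)_j$ is either the corresponding martingale block or simply $|f_j|$, and the power $|K|^{-n}$ in \eqref{eq:Snorm2par} matches exactly. This yields $|\langle S_k(f_1,\dots,f_n),f_{n+1}\rangle|\lesssim\sum_{K\in\calD}|K|\prod_{j=1}^{n+1}\langle(\cdot)_j\rangle_K$, with a constant that does not see $k$.

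\textbf{Spending the cancellation.}
In each parameter $m\in\{1,2\}$ the shift has at least two cancellative slots $j_0^m,j_1^m$, and since $\{j_0^m,j_1^m\}$ has two elements, at least one of them is an input slot. One differentiated slot per parameter is not enough — an undifferentiated average $\langle|f_j|\rangle_K$ does not decay as $K$ grows — so I spend \emph{two} cancellative directions per parameter. If $n+1\notin\{j_0^m,j_1^m\}$ I spend two input slots and obtain two input martingale blocks in parameter $m$; if $n+1\in\{j_0^m,j_1^m\}$ I spend one input slot together with the output slot, obtaining one input block and one block of $f_{n+1}$ in parameter $m$. Depending on whether the second alternative occurs in $0$, $1$, or $2$ of the two parameters, the ``input part'' of $\sum_K|K|\prod_j\langle(\cdot)_j\rangle_K$ is, respectively, an admissible form of $A_{3,k}(f_1,\dots,f_n)$, of $A_{2,k}(f_1,\dots,f_n)$, or of $A_{1,k}(f_1,\dots,f_n)$ — the martingale blocks may sit on several different input functions, which is precisely what the symmetric variants of these families in Section~\ref{sec:SquareFunctions} permit.

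\textbf{Conclusion.}
In the cleanest case — $f_{n+1}$ carries no cancellation — the output factor is just $\langle|f_{n+1}|\rangle_K$, and since each $\prod_{j\le n}\langle(\cdot)_j\rangle_K$ is constant on $K$ one gets $\sum_K|K|\big(\prod_{j\le n}\langle(\cdot)_j\rangle_K\big)\langle|f_{n+1}|\rangle_K=\int A_{3,k}(f_1,\dots,f_n)\,|f_{n+1}|$; H\"older's inequality followed by Theorem~\ref{thm:thm3} gives $\le\|A_{3,k}(f_1,\dots,f_n)w\|_{L^p}\,\|f_{n+1}w^{-1}\|_{L^{p'}}\lesssim\prod_{i=1}^n\|f_iw_i\|_{L^{p_i}}$. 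In the other two cases the block(s) extracted from $f_{n+1}$ must be recombined with the input structure; here one follows the scheme of the proof of Theorem~\ref{thm:thm3} itself — dualize the emerging square functions, iterate the lower square function estimate Lemma~\ref{lem:lem3} on the input slots (legitimate since $w^p$ and the $w_i^{-p_i'}$ lie in $A_\infty$ by Lemma~\ref{lem:lem1}), and sweep the remaining averages together with the data of $f_{n+1}$ into the multilinear maximal function, Propositions~\ref{prop:prop2} and~\ref{prop:vecvalmax}, acting against a genuinely multilinear weight furnished by the duality Lemma~\ref{lem:lem7}.

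\textbf{Main obstacle.}
The real work is organisational: one must track the bi-parameter cancellative structure of $S_k$ so that the two martingale blocks extracted in each parameter land on admissible slots — at most one of them on the output — and so that after the Cauchy--Schwarz steps the surviving object is literally one of the forms of $A_{1,k},A_{2,k},A_{3,k}$ covered by Theorem~\ref{thm:thm3}, symmetric variants included. One must also avoid the natural temptation to split the $f_{n+1}$ side into a purely linear weighted estimate: $w^{-p'}$ need not belong to $A_{p'}$, so every manoeuvre involving $f_{n+1}$ has to stay inside the genuinely multilinear maximal- and square-function machinery. The passage to general $m$ is the same argument with $m$-fold tensor products of one-parameter martingale blocks, as anticipated in the paper's plan.
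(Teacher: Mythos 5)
Your reduction to
$$
|\langle S_k(f_1,\dots,f_n),f_{n+1}\rangle|\lesssim\sum_{K\in\calD}|K|\prod_{j=1}^{n+1}\langle(\cdot)_j\rangle_K
$$
is exactly the paper's, including the $\ell^1$ telescoping over descendants of $K$ that makes the bound complexity-free. The divergence — and the gap — comes immediately after. You try to peel off the output slot $f_{n+1}$ and reduce to an \emph{$n$-linear} operator $A_{1,k}$, $A_{2,k}$, or $A_{3,k}$ on the inputs alone, splitting into three cases according to how many parameters' worth of cancellation $f_{n+1}$ carries. Your case with no cancellation on $f_{n+1}$ is fine (it is just H\"older), but for the other two cases you do not actually close the argument: you acknowledge that $w^{-p'}$ need not be in $A_{p'}$, that you therefore cannot give a linear bound for the $f_{n+1}$-factor, and then propose to ``follow the scheme of the proof of Theorem~\ref{thm:thm3} itself.'' That is not a proof — it is a plan to re-derive a large fraction of Theorem~\ref{thm:thm3} inside this proof — and it leaves the majority of the case analysis unverified.

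The idea you are missing is precisely what makes the paper's argument short and uniform: \emph{do not treat $f_{n+1}$ asymmetrically}. Theorem~\ref{thm:thm3} is stated for an arbitrary number of arguments, with exponents merely required to satisfy $1<q_i\le\infty$ and $\sum_i 1/q_i>0$; in particular $q=1$ is admissible. By Lemma~\ref{lem:lem7} (applied once, i.e.\ the observation recorded as \eqref{eq:eq12}) the tuple $(w_1,\dots,w_n,w^{-1})$ lies in $A_{(p_1,\dots,p_n,p')}$, has associated target weight $\prod_i w_i\cdot w^{-1}=1$ and target exponent $1/p+1/p'=1$. So the display above is literally $\|A_{3,k}(f_1,\dots,f_{n+1})\cdot 1\|_{L^1}$ for a suitable $(n+1)$-linear $A_{3,k}$ — whichever of the four martingale blocks happen to land on $f_{n+1}$ is irrelevant, because $A_{3,k}$ is defined as a family ``with two martingale blocks in each parameter, which can be moved around,'' and $f_{n+1}$ is simply the $(n+1)$-st argument. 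Applying Theorem~\ref{thm:thm3} in that $(n+1)$-linear weighted form gives $\prod_{i=1}^n\|f_iw_i\|_{L^{p_i}}\cdot\|f_{n+1}w^{-1}\|_{L^{p'}}$ in one step, for all placements of cancellation simultaneously. There is no case analysis and nothing left to re-prove. This is exactly the content of your last paragraph's warning — stay in the multilinear machinery — but you did not realize that Theorem~\ref{thm:thm3} \emph{already} is that machinery, with $f_{n+1}$ included as one of its slots.
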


\begin{proof}
We use duality to always reduce to one of the operators of type $A_{3}$ in Theorem \ref{thm:thm3}.
Performing the proof like this has the advantage that the form of the shift really plays no role -- it just affects which type of $A_3$ operator we get.
For example, we consider the explicit case
$$
S_k(f_1, \dots, f_n)
= \sum_{K} A_K(f_1, \ldots, f_n),
$$
where
$$
A_K(f_1, \ldots, f_n) = 
\sum_{\substack{R_1, \ldots, R_{n+1} \\ R_j^{(k_j)} = K }}
a_{K, (R_j)} \langle f_1, h_{R_1} \rangle \prod_{j=2}^{n} \langle f_j, \wt h_{R_j} \rangle h_{R_{n+1}}.
$$
Fix some $\vec p = (p_1, \ldots, p_n)$ with $1 < p_i < \infty$ and $p > 1$, which is enough by extrapolation. We will dualise using $f_{n+1}$ with $\|f_{n+1}w^{-1}\|_{L^{p'}} \le 1$.
The normalisation of the shift coefficients gives the 
direct estimate
\begin{equation*}
\begin{split}
\sum_{K}& |\langle A_K(f_1, \ldots, f_n), f_{n+1} \rangle | \\
& \le \sum_K \sum_{\substack{R_1, \ldots, R_{n+1} \\ R_j^{(k_j)} = K }} 
\frac{\prod_{j=1}^{n+1} |R_j|^{1/2}}{|K|^{n}} 
\Big|\langle \Delta_{K,k_1} f_1, h_{R_1} \rangle \prod_{j=2}^{n} \langle f_j, \wt h_{R_j} \rangle 
\langle \Delta_{K,k_{n+1}}f_{n+1},h_{R_{n+1}}\rangle\Big| \\
& \le \sum_K \sum_{\substack{R_1, \ldots, R_{n+1} \\ R_j^{(k_j)} = K }} 
\frac{1}{|K|^{n}} 
\langle |\Delta_{K,k_1} f_1|, 1_{R_1} \rangle \prod_{j=2}^{n} \langle |f_j|, 1_{R_j} \rangle 
\langle |\Delta_{K,k_{n+1}}f_{n+1}|,1_{R_{n+1}}\rangle \\ 
&\le  \sum_K \langle | \Delta_{K,k_1} f_1 | \rangle_K 
\prod_{j=2}^{n} \langle |f_j| \rangle_K\langle |\Delta_{K,k_{n+1}}  f_{n+1}| \rangle_K |K| \\
& = \Big\| \sum_K \langle | \Delta_{K,k_1} f_1 | \rangle_K 
\prod_{j=2}^{n} \langle |f_j| \rangle_K\langle |\Delta_{K,k_{n+1}}  f_{n+1}| \rangle_K 1_K \Big\|_{L^1},
\end{split}
\end{equation*}
where we used \eqref{eq:HaarMart} in the first step in the passage from Haar functions into martingale differences.
Notice that
\begin{equation}\label{eq:eq12}
(w_1, \cdots, w_n, w^{-1})\in A_{(p_1,\cdots, p_n, p')}, \qquad w=\prod_{i=1}^n w_i.
\end{equation}
The target weight associated to this data is $ww^{-1} = 1$ and the target exponent is $1/p + 1/p' = 1$.
By using Theorem \ref{thm:thm3} with a suitable $A_{3}(f_1, \ldots, f_{n+1})$ and the above weight we can directly dominate this by
$$
\Big[\prod_{i=1}^n \|f_i w_i\|_{L^{p_i}}  \Big]\cdot \|f_{n+1} w^{-1}\|_{L^{p'}} \le \prod_{i=1}^n \|f_i w_i\|_{L^{p_i}}.
$$
We are done.
\end{proof}

\subsection{Partial paraproducts}
Let $k=(k_1, \dots, k_{n+1})$, where $k_j \in \{0,1,\ldots\}$.
An $n$-linear bi-parameter partial paraproduct $(S\pi)_k$ with the paraproduct component on $\R^{d_2}$ takes the form
\begin{equation}\label{eq:Spi}
\langle (S\pi)_k(f_1, \ldots, f_n), f_{n+1} \rangle = 
\sum_{K = K^1 \times K^2} \sum_{\substack{ I^1_1, \ldots, I_{n+1}^1 \\ (I_j^1)^{(k_j)} = K^1}} a_{K, (I_j^1)} \prod_{j=1}^{n+1} \langle f_j, \wt h_{I_j^1} \otimes u_{j, K^2} \rangle,
\end{equation}
where the functions $\wt h_{I_j^1}$ and $u_{j, K^2}$ satisfy the following.
There are $j_0,j_1 \in \{1, \ldots, n+1\}$, $j_0 \not =j_1$, so that $\wt h_{I_{j_0}^1}=h_{I_{j_0}^1}$, $\wt h_{I_{j_1}^1}=h_{I_{j_1}^1}$ and for the remaining indices $j \not \in \{j_0, j_1\}$ we have $\wt h_{I_j^1} \in \{h_{I_j^1}^0, h_{I_j^1}\}$. There is $j_2 \in \{1, \ldots, n+1\}$ so that $u_{j_2, K^2} = h_{K^2}$ and for the remaining indices $j \ne j_2$ we have
$u_{j, K^2} = \frac{1_{K^2}}{|K^2|}$.
Moreover, the coefficients are assumed to satisfy
\begin{equation}\label{eq:PPNorma}
\| (a_{K, (I_j^1)})_{K^2} \|_{\BMO} = \sup_{K^2_0 \in \calD^2} \Big( \frac{1}{|K^2_0|} \sum_{K^2 \subset K^2_0} |a_{K, (I_j^1)}|^2 \Big)^{1/2}  
\le \frac{\prod_{j=1}^{n+1} |I_j^1|^{\frac 12}}{|K^1|^{n}}.
\end{equation}
Of course, $(\pi S)_k$ is defined symmetrically.

The following $H^1$-$\BMO$ duality type estimate is well-known and elementary:
\begin{equation}\label{eq:H1BMO}
\sum_{K^2} |a_{K^2}| |b_{K^2}| \lesssim \| (a_{K^2} ) \|_{\BMO} \Big\| \Big( \sum_{K^2} |b_{K^2}|^2 \frac{1_{K^2}}{|K^2|} \Big)^{1/2} \Big \|_{L^1}.
\end{equation}
Such estimates have natural multi-parameter analogues also, and the proofs in all parameters are analogous. See e.g. \cite[Equation (4.1)]{MO}.

Our result for the partial paraproducts has a significantly more difficult proof than for the other model operators. It is also more inefficient
in that is produces an exponential -- although crucially with an arbitrarily small exponent -- dependence on the complexity. This has
some significance for the required kernel regularity of CZOs, but a standard $t \mapsto t^{\alpha}$ type continuity modulus will still suffice.
\begin{thm}\label{thm:thm4}
Suppose $(S\pi)_k$ is an $n$-linear partial paraproduct, $1 < p_1, \ldots, p_n \le \infty$ and $\frac{1}{p} = \sum_{i=1}^n \frac{1}{p_i}> 0$. Then, for every 
$0<\beta \le 1$ we have
$$
\|(S\pi)_k(f_1, \ldots, f_n)w \|_{L^p} \lesssim_\beta 2^{\max_j k_j \beta}\prod_{i=1}^n \|f_i w_i\|_{L^{p_i}} 
$$
for all multilinear bi-parameter weights $\vec w \in A_{\vec p}$.
\end{thm}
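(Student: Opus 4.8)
The plan is to dualise and, by extrapolation (Theorem \ref{thm:ext}), reduce to exponents $1<p_1,\dots,p_n<\infty$ with $p>1$; we then test $\langle (S\pi)_k(f_1,\dots,f_n),f_{n+1}\rangle$ against $f_{n+1}$ with $\|f_{n+1}w^{-1}\|_{L^{p'}}\le 1$, recalling that $(w_1,\dots,w_n,w^{-1})\in A_{(p_1,\dots,p_n,p')}$ with target weight $ww^{-1}=1$ and target exponent $1$ (cf. \eqref{eq:eq12}). Expanding \eqref{eq:Spi}, the first move is to freeze the indices $K^1$ and $I_1^1,\dots,I_{n+1}^1$ and apply the $H^1$-$\BMO$ estimate \eqref{eq:H1BMO} in the remaining variable $K^2$, using the normalisation \eqref{eq:PPNorma}. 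This trades the genuine $\R^{d_2}$ paraproduct for an $\R^{d_2}$ square function at the price of the factor $\prod_j|I_j^1|^{1/2}/|K^1|^{n}$. Passing from Haar functions to martingale differences in $\R^{d_1}$ via \eqref{eq:HaarMart}, and using that the pieces $\{I_j^1:(I_j^1)^{(k_j)}=K^1\}$ have pairwise disjoint supports so that $\sum_{(I_j^1)^{(k_j)}=K^1}|\Delta_{I_j^1}^1 g|=|\Delta_{K^1,k_j}^1 g|$ pointwise, one arrives at an expression built from one $\R^{d_2}$ martingale block (on the former paraproduct slot $f_{j_2}$), two $\R^{d_1}$ martingale blocks (on the cancellative Haar slots $f_{j_0},f_{j_1}$), and plain averages on the remaining slots.

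Next I would recognise this object — depending on which of $f_{j_0},f_{j_1},f_{j_2},f_{n+1}$ coincide (in particular whether $j_2=n+1$, i.e.\ the $\R^{d_2}$ cancellation sits on the dualised slot) — as one of the multilinear square functions $A_{1,k}$, $A_{2,k}$, $A_{3,k}$ of Section \ref{sec:SquareFunctions}, or rather as a multilinear maximal function dominating such, applied with the weight tuple $(w_1,\dots,w_n,w^{-1})$. In the benign configurations, roughly those with $j_2\in\{j_0,j_1\}$ so that the paraproduct slot also carries $\R^{d_1}$ cancellation and the three blocks collapse into at most two bi-parameter blocks, one closes the estimate directly by Theorem \ref{thm:thm3} (in its vector-valued form \eqref{extrapol:vv*} whenever $\ell^2$-valued quantities survive the $K^2$ square function) together with Proposition \ref{prop:vecvalmax}, with a bound independent of $k$, exactly as for shifts; here Lemma \ref{lem:lem7} is used to redistribute the weights back onto the individual slots.

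The main obstacle is the remaining configuration, in which $f_{j_2}$ has no $\R^{d_1}$ cancellation. There, after the $H^1$-$\BMO$ step, the coarse averages $\langle\,\cdot\,\rangle_{K^1}$ that land on the weight-carrying factors must be compared with the finer averages $\langle\,\cdot\,\rangle_{I_j^1}$ over $I_j^1$ with $(I_j^1)^{(k_j)}=K^1$, and for an $A_\infty$ weight $u$ the ratio $\langle u\rangle_{I_j^1}/\langle u\rangle_{K^1}$ is only controlled by $2^{Ck_j}$. To keep the loss arbitrarily small I would split, by H\"older's inequality, a small power $\beta$ off the whole expression: the $\beta$-portion is estimated crudely, the cost of transferring the $A_\infty$ weight across $k_j$ dyadic generations being absorbed into $2^{\max_j k_j\beta}$, while the bulk $(1-\beta)$-portion is treated cleanly by the weighted vector-valued square function estimate of Proposition \ref{prop:prop3}, in which the surviving averages appear precisely in the normalised shape $1_K/\langle u\rangle_K^2$ demanded there, with $u$ one of the $A_\infty$ weights $w^p$, $w_i^{-p_i'}$ provided by Lemma \ref{lem:lem1}. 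Assembling the cases, redistributing weights via Lemma \ref{lem:lem7}, and extrapolating once more with Theorem \ref{thm:ext}, gives $\lesssim_{\beta} 2^{\max_j k_j\beta}\prod_i\|f_iw_i\|_{L^{p_i}}$ for every $0<\beta\le 1$. The technical heart is the bookkeeping of these case distinctions and checking that in each case the surviving object is genuinely of the form covered by Theorem \ref{thm:thm3} or Proposition \ref{prop:prop3}.
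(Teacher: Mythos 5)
Your high-level outline — dualise, reduce via extrapolation to the Banach range, apply the $H^1$-$\BMO$ estimate \eqref{eq:H1BMO} to convert the $\R^{d_2}$-paraproduct into an $\R^{d_2}$-square function, and then control the resulting object with Proposition \ref{prop:prop3} and re-extrapolate — matches the scaffolding of the paper's proof. But several of the load-bearing steps are missing or incorrect.

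First, your ``benign configuration'' dichotomy does not hold. You assert that when the paraproduct slot $j_2$ is also $\R^{d_1}$-cancellative, the estimate closes ``directly by Theorem~\ref{thm:thm3} with a bound independent of $k$, exactly as for shifts.'' That cannot work: after \eqref{eq:H1BMO} the $K^2$ square function sits inside an $L^1(\R^{d_2})$-integral, while the sums over $K^1$ and $I^1_1,\dots,I^1_{n+1}$ sit outside. The $K^2$ square function is applied to a \emph{product} of the $\R^{d_1}$-pairings, so the $(n+1)$-fold sum over the $I^1_j$ cannot be pushed through it. The identity $\sum_{J^{(k)}=K^1}|\Delta_J^1 g|=|\Delta_{K^1,k}^1 g|$ is correct, but one cannot use it here to collapse the object into an $A_{j,k}$-type square function, and the proof's partial paraproduct bound always carries the $2^{\beta\max_j k_j}$ factor regardless of where $j_2$ lies.

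Second, the decisive structural step of the paper's argument is absent from your plan: the telescoping expansion of every non-cancellative one-dimensional slot with $k_j>0$,
$$
\langle f_j\rangle_{I^1_j,1}=\langle f_j\rangle_{K^1,1}+\sum_{i_j=1}^{k_j}\langle\Delta^1_{(I^1_j)^{(i_j)}}f_j\rangle_{(I^1_j)^{(i_j-1)},1},
$$
performed \emph{inside} the $A_{K^2}$ operators before passing to square functions. This reduces \eqref{eq:eq13} to at most $(1+\max_j k_j)^{n-1}$ terms of the form \eqref{eq:eq14} in which a non-cancellative Haar function can only live on $K^1$ itself (zero complexity). Without this reduction, the subsequent pointwise dominations by maximal functions with respect to the measures $\sigma_j$ and $\langle\sigma_j\rangle_{K^1,1}$ are simply not available.

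Third, the source of the exponential factor is not the ``cost of transferring an $A_\infty$ weight across $k_j$ dyadic generations.'' That mechanism would yield $2^{Ck_j}$ with $C$ governed by doubling, hence \emph{not} arbitrarily small, and the theorem would be false in the form stated. In the paper the factor $2^{\beta\max_j k_j}$ comes instead from an $\ell^1_{L^1_j}\to\ell^s_{L^1_j}$ H\"older step that pays the cardinality $\#\{L^1_j:(L^1_j)^{(l_j)}=K^1\}=2^{l_j d_1}$ to the power $1/s'$; choosing $d_1/s'=\beta/(2n)$ makes the exponent arbitrarily small. After that, Proposition~\ref{prop:prop3} (which is formulated with an outer $\ell^s$) absorbs the $\ell^s$-structure. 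Your proposal, as written, would not produce the arbitrarily small exponent and thus would not survive the Dini summation in the representation theorem step.

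Finally, you skip the de-normalisation device: the paper does not directly bound the $L^p$-norm, but inserts $\tfrac{1_{K^1}}{|K^1|}$ and proves the weighted $(n+1)$-linear estimate \eqref{eq:eq16} at the single exponent tuple $(2,\dots,2)$ with target $L^{2/(n+1)}$, where the $A_{(2,\dots,2)}$ condition can be used to build the pointwise maximal function dominations (Proposition~\ref{prop:vecvalmax}, Lemma~\ref{lem:lem5}); \eqref{eq:eq15} is then obtained by extrapolating \eqref{eq:eq16} and specialising to $(p_1,\dots,p_n,p')$ and $(w_1,\dots,w_n,w^{-1})$. This is the reduction that makes the iterated maximal function/square function argument close, and its absence is a genuine gap in your outline, not a bookkeeping detail.
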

\begin{proof}
Recall that $(S\pi)_k$ is of the form \eqref{eq:Spi}. Recall also the indices $j_0$ and $j_1$, which say that $\wt h_{I^1_j}=h_{I^1_j}$ at least for 
$j\in \{j_0, j_1\}$, and the index $j_2$, which specifies the place of $h_{K^2}$ in the second parameter.
It makes no difference for the argument what the indices $j_0$ and $j_1$ are, so we assume that $j_0=1$ and $j_1=2$. It makes a small difference 
whether $j_2 \in \{j_0,j_1\}$ or $j_2 \not \in \{j_0,j_1\}$, so we do not specify $j_2$ yet. To make the following formulae shorter we write $\wt h_{I^1_j}$ 
for every $j$ but keep in mind that these are cancellative at least for $j \in \{1,2\}$. We define
$$
A_{K^2}(g_1, \dots, g_{n+1})
= \prod_{j=1}^{n+1} \langle g_j, u_{j,K^2} \rangle
$$ 
and write $(S\pi)_k$ in the form
$$
\langle (S\pi)_k(f_1, \ldots, f_n), f_{n+1} \rangle = 
\sum_{K = K^1 \times K^2} \sum_{\substack{ I^1_1, \ldots, I_{n+1}^1 \\ (I_j^1)^{(k_j)} = K^1}} a_{K, (I_j^1)} 
A_{K^2}(\langle f_{1}, \wt h_{I_{1}^1}\rangle_1, \dots, \langle f_{n+1}, \wt h_{I_{n+1}^1}\rangle_1 ).
$$

Fix some $\vec p = (p_1, \ldots, p_n)$ with $1 < p_i < \infty$ and $p > 1$, which is enough by extrapolation. We will dualise using $f_{n+1}$ with $\|f_{n+1}w^{-1}\|_{L^{p'}} \le 1$. We may assume $f_j \in L^{\infty}_c$.
The $H^1$-$\BMO$ duality \eqref{eq:H1BMO} gives that
\begin{equation}\label{eq:eq13}
\begin{split}
|\langle (S\pi)_k(f_1, \ldots, f_n), f_{n+1} \rangle|
&\lesssim \sum_{K^1} \sum_{\substack{ I^1_1, \ldots, I_{n+1}^1 \\ (I_j^1)^{(k_j)} = K^1}}\Bigg[ \frac{\prod_{j=1}^{n+1}|I^1_j|^{\frac 12}}{|K^1|^n} \\
&\int_{\R^{d_2}} \Big( \sum_{K^2} |A_{K^2}(\langle f_{1}, \wt h_{I_{1}^1}\rangle_1, \dots, \langle f_{n+1}, \wt h_{I_{n+1}^1}\rangle_1 )|^2 
\frac{1_{K^2}}{|K^2|} \Big)^{\frac{1}{2}}\Bigg].
\end{split}
\end{equation}

Suppose $j \in \{3, \dots, n+1\}$ is such that $\wt h_{I^1_j}=h_{I^1_j}^0$ and $k_j>0$, that is, we have non-cancellative Haar functions and non-zero complexity.  
We expand 
$$
|I^1_j|^{-\frac{1}{2}} \langle f_j, h^0_{I^1_j} \rangle_1
=\langle f\rangle_{I_j^1,1}
=\langle f_j \rangle_{K^1,1}+\sum_{i_j=1}^{k_j} \langle \Delta^1_{(I^1_j)^{(i_j)}} f_j\rangle_{(I_j^1)^{(i_j-1)},1}.
$$
For convenience, we further write that
$$
\langle \Delta^1_{(I^1_j)^{(i_j)}} f_j\rangle_{(I_j^1)^{(i_j-1)},1}
= \langle h_{(I^1_j)^{(i_j)}} \rangle_{(I^1_j)^{(i_j-1)}} \langle f_j, h_{(I^1_j)^{(i_j)}} \rangle_1, 
$$
where we are suppressing the summation over the $2^{d_1}-1$ different Haar functions. 
We perform these expansions inside the operators $A_{K^2}$, and take the sums out of the $\ell^2_{K^2}$ norm.
This gives that the right hand side of \eqref{eq:eq13} is less than a sum of at most $\prod_{j=3}^n(1+k_j)$ terms of the form 
\begin{equation*}
\begin{split}
\sum_{K^1}  \sum_{\substack{ I^1_1, \ldots, I_{n+1}^1 \\ (I_j^1)^{(k_j)} = K^1}}& \Bigg[ \frac{\prod_{j=1}^{n+1} |I^1_j| |(I^1_j)^{(i_j)}|^{-\frac 12}}{|K^1|^n} \\
&\int_{\R^{d_2}} \Big( \sum_{K^2} |A_{K^2}(\langle f_{1}, \wt h_{(I_{1}^1)^{(i_1)}}\rangle_1, \dots, \langle f_{n+1}, \wt h_{(I_{n+1}^1)^{(i_{n+1})}}\rangle_1 )|^2 
\frac{1_{K^2}}{|K^2|} \Big)^{\frac{1}{2}} \Bigg].
\end{split}
\end{equation*}
Here we have the following properties. 
If $j$ in an index such that we did not do the expansion related to $j$, then $i_j=0$. Thus, at least $i_1=i_2=0$. We also remind that
$\wt h_{(I_{j}^1)^{(i_j)}}=h_{(I_{j}^1)^{(i_j)}}$ for $j=1,2$.
If $i_j<k_j$, then 
$\wt h_{(I_{j}^1)^{(i_j)}}=h_{(I_{j}^1)^{(i_j)}}$. If $i_j=k_j$, then $\wt h_{(I_{j}^1)^{(i_j)}} \in \{h_{K^1}, h_{K^1}^0\}$. We can further rewrite this as
\begin{equation}\label{eq:eq14}
\sum_{K^1}  \sum_{\substack{ L^1_1, \ldots, L_{n+1}^1 \\ (L_j^1)^{(l_j)} = K^1}}  \frac{\prod_{j=1}^{n+1} |L^1_j|^{\frac 12} }{|K^1|^n} 
\int_{\R^{d_2}} \Big( \sum_{K^2} |A_{K^2}(\langle f_{1}, \wt h_{L_1}\rangle_1, \dots, \langle f_{n+1}, \wt h_{L_{n+1}}\rangle_1 )|^2 
\frac{1_{K^2}}{|K^2|} \Big)^{\frac{1}{2}}.
\end{equation}
This is otherwise analogous to the right hand side of \eqref{eq:eq13} except for the key difference that if a non-cancellative Haar function appears, then
the related complexity is zero.

We turn to estimate \eqref{eq:eq14}. We show that
\begin{equation}\label{eq:eq15}
\eqref{eq:eq14} \lesssim_\beta 2^{\max_j k_j \frac \beta 2 }\Big[\prod_{j=1}^n \|f_j w_j\|_{L^{p_j}}\Big]\| f_{n+1} w^{-1} \|_{L^{p'}}.
\end{equation}
Recalling that $\| f_{n+1} w^{-1} \|_{L^{p'}} \le 1$ this implies that the left hand side of \eqref{eq:eq13} satisfies
$$
LHS\eqref{eq:eq13} \lesssim_ \beta (1+\max_j k_j)^{n-1} 2^{\max_j k_j \frac \beta 2 } \prod_{j=1}^n \|f_j w_j\|_{L^{p_j}}
\lesssim_\beta 2^{\max_j k_j \beta  } \prod_{j=1}^n \|f_j w_j\|_{L^{p_j}},
$$
which proves the theorem.

Let $(v_1, \dots, v_{n+1}) \in A_{(2, \dots, 2)}$ and $v=\prod_{j=1}^{n+1} v_j$. We will prove the $(n+1)$-linear estimate
\begin{equation}\label{eq:eq16}
\begin{split}
\Bigg\| &
\sum_{K^1} \sum_{\substack{ L^1_1, \ldots, L_{n+1}^1 \\ (L_j^1)^{(l_j)} = K^1}} \Bigg[
\frac{\prod_{j=1}^{n+1} |L^1_j|^{\frac 12} }{|K^1|^n} \frac{1_{K^1}}{|K^1|} \\
&\Big( \sum_{K^2} |A_{K^2}(\langle f_{1},  \wt h_{L_1}\rangle_1, \dots, \langle f_{n+1}, \wt h_{L_{n+1}}\rangle_1 )|^2 
\frac{1_{K^2}}{|K^2|} \Big)^{\frac{1}{2}}\Bigg]
v\Bigg\|_{L^{\frac{2}{n+1}}} 
 \lesssim 2^{\max_j k_j \frac \beta 2 } \prod_{j=1}^{n+1} \| f_j v_j \|_{L^2}.
\end{split}
\end{equation}
Extrapolation, Theorem \ref{thm:ext}, then gives that
\begin{equation*}
\begin{split}
\Bigg\|
\sum_{K^1} \sum_{\substack{ L^1_1, \ldots, L_{n+1}^1 \\ (L_j^1)^{(l_j)} = K^1}} 
\frac{\prod_{j=1}^{n+1} |L^1_j|^{\frac 12} }{|K^1|^n}
\frac{1_{K^1}}{|K^1|}
\Big( \sum_{K^2} |A_{K^2}(\langle f_{1},  \wt h_{L_1}\rangle_1, \dots,& \langle f_{n+1}, \wt h_{L_{n+1}}\rangle_1 )|^2 
\frac{1_{K^2}}{|K^2|} \Big)^{\frac{1}{2}}
v\Bigg\|_{L^{q}} \\
& \lesssim 2^{\max_j k_j \frac \beta 2 }\prod_{j=1}^{n+1} \| f_j v_j \|_{L^{q_j}}
\end{split}
\end{equation*}
for all $q_1, \dots, q_{n+1} \in (1,\infty]$ such that $\frac{1}{q}=\sum_{j=1}^{n+1} \frac{1}{q_j}>0$ and for all $(v_1, \dots, v_{n+1}) \in A_{(q_1, \dots, q_{n+1})}$.
Applying this with the exponent tuple $(p_1, \dots, p_n, p')$ and the weight tuple $(w_1, \dots, w_n,w^{-1}) \in A_{(p_1, \dots, p_n,p')}$ gives \eqref{eq:eq15}.

It remains to prove \eqref{eq:eq16}. We denote $\sigma_j=v_j^{-2}$. The $A_{(2, \dots, 2)}$ condition gives that
$$
\langle v^{\frac{2}{n+1}} \rangle_K^{n+1} \prod_{j=1}^{n+1} \langle \sigma_j \rangle_K \lesssim 1.
$$
Using this we have
$$
|A_{K^2}(\langle f_{1},  \wt h_{L^1_1}\rangle_1, \dots, \langle f_{n+1}, \wt h_{L^1_{n+1}}\rangle_1 )|
\lesssim \frac{1}{\langle v^{\frac{2}{n+1}} \rangle_K^{n+1}} 
\Bigg|A_{K^2}\Bigg(\frac{\langle f_{1},  \wt h_{L^1_1}\rangle_1}{\langle \sigma_1\rangle_K}, \dots, 
\frac{\langle f_{n+1}, \wt h_{L^1_{n+1}}\rangle_1}{\langle \sigma_{n+1} \rangle_K} \Bigg)\Bigg|.
$$
For the moment we abbreviate the last $|A_{K^2}( \cdots)|$ as $c_{K,(L^1_j)}$. There holds that
\begin{equation*}
\begin{split}
\frac{1}{\langle v^{\frac{2}{n+1}} \rangle_K^{n+1}}c_{K,(L^1_j)}
&= \Bigg[ \frac{1}{\langle v^{\frac{2}{n+1}} \rangle_K} \Big\langle c_{K,(L^1_j)}^{\frac{1}{n+1}}1_K v^{-\frac{2}{n+1}}v^{\frac{2}{n+1}} \Big\rangle_K \Bigg ]^{n+1} \\
&\le \Big(M_\calD^{v^{\frac{2}{n+1}}}\Big(c_{K,(L^1_j)}^{\frac{1}{n+1}}1_K v^{-\frac{2}{n+1}}\Big)(x)\Big)^{n+1}
\end{split}
\end{equation*}
for all $x \in K$.

We substitute this into the left hand side of \eqref{eq:eq16}. This gives that the term there is dominated by
\begin{equation*}
\Bigg\| 
\sum_{K^1} \sum_{\substack{ L^1_1, \ldots, L_{n+1}^1 \\ (L_j^1)^{(l_j)} = K^1}}
\frac{\prod_{j=1}^{n+1} |L^1_j|^{\frac 12} }{|K^1|^{n+1}} 
\Big( \sum_{K^2} M_\calD^{v^{\frac{2}{n+1}}}\Big(c_{K,(L^1_j)}^{\frac{1}{n+1}}1_K v^{-\frac{2}{n+1}}\Big)^{2(n+1)}
\frac{1}{|K^2|} \Big)^{\frac{1}{2}}
v\Bigg\|_{L^{\frac{2}{n+1}}}.
\end{equation*}
We use the $L^2(\ell_{K^1,(L^1_j)}^{n+1}(\ell_{K^2}^{2(n+1)}))$ boundedness of the maximal function $M_\calD^{v^{\frac{2}{n+1}}}$, see Proposition \ref{prop:vecvalmax}.
This gives that the last norm is dominated by
\begin{equation}\label{eq:eq17}
\Bigg\| 
\sum_{K^1} \sum_{\substack{ L^1_1, \ldots, L_{n+1}^1 \\ (L_j^1)^{(l_j)} = K^1}}
\frac{\prod_{j=1}^{n+1} |L^1_j|^{\frac 12} }{|K^1|^{n+1}} 1_{K^1}
\Big( \sum_{K^2} c_{K,(L^1_j)}^{2}
\frac{1_{K^2}}{|K^2|} \Big)^{\frac{1}{2}}
v^{-1}\Bigg\|_{L^{\frac{2}{n+1}}}.
\end{equation}

Now, we recall what the numbers $c_{K,(L^1_j)}$ are. At this point it becomes relevant which of the Haar functions $\wt h_{L^1_j}$ are cancellative and 
what is the form of the operators $A_{K^2}$. We assume that $\wt h_{L^1_j}=h_{L^1_j}$ for $j=1, \dots, n$ 
and $\wt h_{L^1_{n+1}}=h_{L^1_{n+1}}^0=h_{K^1_{n+1}}^0$, which is a good representative of the general case. First, we assume that the index $j_2$, which specifies the place of $h_{K^2}$ in $A_{K^2}$, satisfies $j_2 \in \{1, \dots, n\}$.
The point is that then $\wt h_{L^1_{j_2}}=h_{L^1_{j_2}}$. For convenience of notation we assume that $j_2=1$. With these assumptions there holds that
\begin{equation}\label{eq:eq20}
c_{K,(L^1_j)} =\Bigg|\frac{\langle f_{1}, h_{L^1_1} \otimes h_{K^2}\rangle}{\langle \sigma_1\rangle_K}
\prod_{j=2}^n\frac{\Big\langle f_{j}, h_{L^1_j} \otimes \frac{1_{K^2}}{|K^2|}\Big\rangle}{\langle \sigma_j\rangle_K}  
\cdot \frac{\Big\langle f_{n+1}, h^0_{K^1}\otimes \frac{1_{K^2}}{|K^2|}\Big\rangle}{\langle \sigma_{n+1} \rangle_K} \Bigg|.
\end{equation}

For $j=2, \dots,n$ we estimate that
\begin{equation}\label{eq:eq21}
\begin{split}
\frac{\Big|\Big\langle f_{j}, h_{L^1_j} \otimes \frac{1_{K^2}}{|K^2|}\Big\rangle\Big|}{\langle \sigma_j\rangle_K}
&= \frac{\Big| \Big\langle \langle f_{j}, h_{L^1_j} \rangle_1 \langle \sigma_j \rangle_{K^1,1}^{-1}\langle \sigma_j \rangle_{K^1,1}, 
\frac{1_{K^2}}{|K^2|}\Big\rangle\Big|}{\langle \langle \sigma_j\rangle_{K^1,1}\rangle_{K^2}}\\
&\le M^{\langle \sigma_j\rangle_{K^1,1}}_{\calD^2}(\langle f_{j}, h_{L^1_j} \rangle_1 \langle \sigma_j \rangle_{K^1,1}^{-1})(x_2)
\end{split}
\end{equation}
for all $x_2 \in K^2$. Also, there holds that
$$
\frac{\Big|\Big\langle f_{n+1}, h^0_{K^1}\otimes \frac{1_{K^2}}{|K^2|}\Big\rangle\Big|}{\langle \sigma_{n+1} \rangle_K}
\le |K^1|^{\frac 12} M_\calD^{\sigma_{n+1}}(f_{n+1}\sigma_{n+1}^{-1})(x)
$$
for all $x \in K$. These give (recall that $L^1_{n+1}=K^1$)  that
\begin{equation*}
\begin{split}
&\sum_{\substack{ L^1_1, \ldots, L_{n+1}^1 \\ (L_j^1)^{(l_j)} = K^1}}
\frac{\prod_{j=1}^{n+1} |L^1_j|^{\frac 12} }{|K^1|^{n+1}} 1_{K^1}\Big( \sum_{K^2} c_{K,(L^1_j)}^{2}
\frac{1_{K^2}}{|K^2|} \Big)^{\frac{1}{2}} \le \prod_{j=1}^n F_{j,K^1} \cdot M_\calD^{\sigma_{n+1}}(f_{n+1}\sigma_{n+1}^{-1}),
\end{split}
\end{equation*}
where
\begin{equation}\label{eq:eq24}
F_{1,K^1}= 1_{K^1}\sum_{(L_1^1)^{(l_1)}=K^1} \frac{|L^1_1|^{\frac 12}}{|K^1|}  \Big( \sum_{K^2}\frac{|\langle f_{1}, h_{L^1_1} \otimes h_{K^2}\rangle|^2}{\langle \sigma_1\rangle_K^2}\frac{1_{K^2}}{|K^2|} \Big)^{\frac{1}{2}}
\end{equation}
and 
\begin{equation}\label{eq:eq25}
F_{j,K^1}
=1_{K^1}\sum_{(L_j^1)^{(l_j)}=K^1} \frac{|L^1_j|^{\frac 12}}{|K^1|}
M^{\langle \sigma_j\rangle_{K^1,1}}_{\calD^2}(\langle f_{j}, h_{L^1_j} \rangle_1 \langle \sigma_j \rangle_{K^1,1}^{-1})
\end{equation}
for $j=2, \dots, n$.

We will now continue from \eqref{eq:eq17} using the above pointwise estimates. Notice that
\begin{equation*}
\begin{split}
\sum_{K^1}\prod_{j=1}^n F_{j,K^1} 
\le \prod_{j=1}^2 \Big(\sum_{K^1} (F_{j,K^1} )^2 \Big)^{1/2}\prod_{j=3}^n \sup_{K^1}F_{j,K^1}
\le \prod_{j=1}^n \Big(\sum_{K^1} (F_{j,K^1} )^2 \Big)^{1/2}.
\end{split}
\end{equation*}
Since $v^{-1}=\prod_{j=1}^{n+1}v_j^{-1}$, we have that
\begin{equation*}
\eqref{eq:eq17} 
\lesssim \prod_{j=1}^n \Big \| \Big( \sum_{K^1} F_{j,K^1}^2 \Big)^{\frac 12} v_{j}^{-1} \Big \|_{L^2} 
\big\|M_\calD^{\sigma_{n+1}}(f_{n+1}\sigma_{n+1}^{-1})v_{n+1}^{-1} \big \|_{L^2}.
\end{equation*}
Since $\sigma_j=v_j^{-2}$ there holds by Proposition \ref{prop:prop1} that
$$
\|M_\calD^{\sigma_{n+1}}(f_{n+1}\sigma_{n+1}^{-1})v_{n+1}^{-1} \|_{L^2}
=\|M_\calD^{\sigma_{n+1}}(f_{n+1}\sigma_{n+1}^{-1}) \|_{L^2(\sigma_{n+1})}
\lesssim \| f_{n+1} v_{n+1} \|_{L^2}.
$$
It remains to estimate the norms for $j=1, \dots, n$.

We begin with $j=1$. If $l_1=0$, then we directly have that
$$
\Big(\sum_{K^1} F_{1,K^1}^2 \Big)^{\frac 12}
=\Big( \sum_{K}\frac{|\langle f_{1}, h_K\rangle|^2}{\langle \sigma_1\rangle_K^2}\frac{1_{K}}{|K|} \Big)^{\frac{1}{2}}.
$$
Since $|\langle f_{1}, h_K\rangle| |K|^{-\frac 12} \le \langle | \Delta_K f_1 | \rangle_K$, we may use Proposition \ref{prop:prop3}
to have that
$$
\Big \| \Big( \sum_{K^1} F_{1,K^1}^2 \Big)^{\frac 12} v_{1}^{-1} \Big \|_{L^2}
\lesssim \| f_1 \sigma_1^{-\frac 12} \|_{L^2}=\| f_1 v_1 \|_{L^2}.
$$

Suppose then $l_1>0$. There holds that
$$
\Big \| \Big( \sum_{K^1} F_{1,K^1}^2 \Big)^{\frac 12} v_{1}^{-1} \Big \|_{L^2}
= \Big( \sum_{K^1} \| F_{1,K^1} v_1^{-1} \|_{L^2}^2\Big)^{\frac 12}.
$$
Let $s \in (1, \infty)$ be such that $d_1/s'=\beta/(2n)$. Then
\begin{equation*}
F_{1,K^1}
\le 2^{\frac{l_1\beta}{2n}}1_{K^1}\bigg(\sum_{(L_1^1)^{(l_1)}=K^1}  \frac{|L^1_1|^{\frac s2}}{|K^1|^s}  \Big( \sum_{K^2}\frac{|\langle f_{1}, h_{L^1_1} \otimes h_{K^2}\rangle|^2}{\langle \sigma_1\rangle_K^2}\frac{1_{K^2}}{|K^2|} \Big)^{\frac{s}{2}} \bigg)^{\frac 1s}.
\end{equation*}
Therefore, $\| F_{1,K^1} v_j^{-1} \|_{L^2}^2$ is less than
\begin{equation}\label{eq:eq18}
2^{\frac{l_1\beta}{n}} \bigg\| \bigg(\sum_{(L_1^1)^{(l_1)}=K^1}  \frac{|L^1_1|^{\frac s2}}{|K^1|^s}  
\Big( \sum_{K^2}\frac{|\langle f_{1}, h_{L^1_1} \otimes h_{K^2}\rangle|^2}{\langle \sigma_1\rangle_K^2}\frac{1_{K^2}}{|K^2|} \Big)^{\frac{s}{2}} 
\bigg)^{\frac 1s} \langle \sigma_1\rangle_{K^1,1}^{\frac 12} \bigg\|_{L^2}^2 |K^1|. 
\end{equation}
Notice that 
$
|\langle f_{1}, h_{L^1_1} \otimes h_{K^2}\rangle | |K^2|^{-\frac 12} 
\le \langle | \Delta_{K^2} \langle f_1, h_{L_1^1} \rangle_1| \rangle_{K^2}.
$
Therefore, the one-parameter case of Proposition \ref{prop:prop3} gives that 
\begin{equation}\label{eq:eq19}
\begin{split}
\eqref{eq:eq18}
&\lesssim 2^{\frac{l_1\beta}{n}} \bigg\| \Big(\sum_{(L_1^1)^{(l_1)}=K^1}  \frac{|L^1_1|^{\frac s2}}{|K^1|^s}  
|\langle f_{1}, h_{L^1_1}\rangle_1|^s\Big)^{\frac 1s} \langle \sigma_1\rangle_{K^1,1}^{-\frac 12} \bigg\|_{L^2}^2 |K^1| \\
& \le   2^{\frac{l_1\beta}{n}} \bigg\| \sum_{(L_1^1)^{(l_1)}=K^1}  \frac{|L^1_1|^{\frac 12}}{|K^1|}  
|\langle f_{1}, h_{L^1_1}\rangle_1| \langle \sigma_1\rangle_{K^1,1}^{-\frac 12} \bigg\|_{L^2}^2 |K^1|.
\end{split}
\end{equation}
Notice that
$$
\sum_{(L_1^1)^{(l_1)}=K^1}  \frac{|L^1_1|^{\frac 12}}{|K^1|}  
|\langle f_{1}, h_{L^1_1}\rangle_1|
\le \langle | \Delta_{K^1,l_1}^1 f_1 | \rangle_{K^1,1}.
$$
Thus, summing the right hand side of \eqref{eq:eq19} over $K^1$ leads to
\begin{equation*}
\begin{split}
 2^{\frac{l_1\beta}{n}} \int_{\R^{d_2}} \sum_{K^1} \langle | \Delta_{K^1,l_1}^1 f_1 | \rangle_{K^1,1}^2 \langle \sigma_1\rangle_{K^1,1}^{-1} |K^1|
 &=2^{\frac{l_1\beta}{n}} \int_{\R^d} \sum_{K^1} \langle | \Delta_{K^1,l_1}^1 f_1 | \rangle_{K^1,1}^2 
 \frac{1_{K^1}}{\langle \sigma_1 \rangle_{K^1,1}^2} \sigma_1 \\
 & \lesssim 2^{\frac{l_1\beta}{n}} \int_{\R^{d}} | f_1 |^2 v_1^2,
 \end{split}
\end{equation*}
where we used Proposition \ref{prop:prop3} again.

Finally, we estimate the norms related to $j=2, \dots, n$, which are all similar. We assume that $l_j>0$. It will be clear how to do the case $l_j=0$.
As above we have
\begin{equation*}
F_{j,K^1}
\le 2^{\frac{l_j\beta}{2n}}1_{K^1}\bigg(\sum_{(L_j^1)^{(l_j)}=K^1} \frac{|L^1_j|^{\frac s2}}{|K^1|^s}
M^{\langle \sigma_j\rangle_{K^1,1}}_{\calD^2}(\langle f_{j}, h_{L^1_j} \rangle_1 \langle \sigma_j \rangle_{K^1,1}^{-1})^s \bigg)^{\frac 1s}.
\end{equation*}
Therefore, we get that
\begin{equation*}
\begin{split}
\| F_{j,K^1} v_{j}^{-1}\|_{L^2}^2
&\le 2^{\frac{l_j\beta}{n}} \bigg\| \bigg(\sum_{(L_j^1)^{(l_j)}=K^1} \frac{|L^1_j|^{\frac s2}}{|K^1|^s}
M^{\langle \sigma_j\rangle_{K^1,1}}_{\calD^2}(\langle f_{j}, h_{L^1_j} \rangle_1 \langle \sigma_j \rangle_{K^1,1}^{-1})^s 
\bigg)^{\frac 1s} \langle \sigma_j \rangle_{K^1,1}^{\frac 12} \bigg \|_{L^2}^2|K^1| \\
&\lesssim 2^{\frac{l_j\beta}{n}}
\bigg\| \bigg(\sum_{(L_j^1)^{(l_j)}=K^1} \frac{|L^1_j|^{\frac s2}}{|K^1|^s}
|\langle f_{j}, h_{L^1_j} \rangle_1 \langle \sigma_j \rangle_{K^1,1}^{-1}|^s 
\bigg)^{\frac 1s} \langle \sigma_j \rangle_{K^1,1}^{\frac 12} \bigg \|_{L^2}^2|K^1| \\
& \le 2^{\frac{l_j\beta}{n}}
\bigg\| \sum_{(L_j^1)^{(l_j)}=K^1} \frac{|L^1_j|^{\frac 12}}{|K^1|}
|\langle f_{j}, h_{L^1_j} \rangle_1| 
 \langle \sigma_j \rangle_{K^1,1}^{-\frac 12} \bigg \|_{L^2}^2|K^1|,
\end{split}
\end{equation*}
where we applied the one-parameter version of Proposition \ref{prop:vecvalmax}. The last norm is
like the last norm in \eqref{eq:eq19}, and therefore the estimate can be concluded with familiar steps.
Combining the estimates we have shown that
$$
\prod_{j=1}^n \Big \| \Big( \sum_{K^1} F_{j,K^1}^2 \Big)^{\frac 12} v_{j}^{-1} \Big \|_{L^2}
\lesssim \prod_{j=1}^n 2^{\frac{l_j\beta}{2n}} \| f_j v_j \|_{L^2}
\le 2^{\max k_j\frac{\beta}{2}} \prod_{j=1}^n  \| f_j v_j \|_{L^2}.
$$

Above, we assumed that the index $j_2$ related to the form of the paraproduct satisfied $j_2=1$, see the discussion before \eqref{eq:eq20}.
It remains to comment on the case $j_2=n+1$. In this case the formula corresponding to \eqref{eq:eq20} is
$$
c_{K,(L^1_j)} =\Bigg|\prod_{j=1}^n\frac{\Big\langle f_{j}, h_{L^1_j} \otimes \frac{1_{K^2}}{|K^2|}\Big\rangle}{\langle \sigma_j\rangle_K}  
\cdot \frac{\langle f_{n+1}, h^0_{K^1}\otimes h_{K^2}\rangle}{\langle \sigma_{n+1} \rangle_K} \Bigg|.
$$
For $j=1, \dots, n$ we do the estimate \eqref{eq:eq21}. Also, there holds that
\begin{equation*}
\begin{split}
\frac{|\langle f_{n+1}, h^0_{K^1}\otimes h_{K^2}\rangle |}{\langle \sigma_{n+1} \rangle_K}
&= |K^1|^{\frac 12}\frac{\big|\bla \langle f_{n+1}, h_{K^2}\rangle_2 \langle \sigma_{n+1} \rangle_{K^2,2}^{-1}\langle \sigma_{n+1} \rangle_{K^2,2} \bra_{K^1}\big|}
{\langle \langle \sigma_{n+1} \rangle_{K^2,2} \rangle_{K^1}} \\
& \le |K^1|^{\frac 12} M_{\calD^1}^{\langle \sigma_{n+1} \rangle_{K^2,2}}(\langle f_{n+1}, h_{K^2}\rangle_2 \langle \sigma_{n+1} \rangle_{K^2,2}^{-1})(x_1)
\end{split}
\end{equation*}
for any $x_1 \in K^1$. 
With the pointwise estimates we proceed as above. Related to $f_j$, $j=1, \dots, n$, this leads to terms which we know how to estimate. 
Related to $f_{n+1}$ we get a similar term except that the parameters are in opposite roles.
We are done.
\end{proof}

\subsection{Full paraproducts}
An $n$-linear bi-parameter full paraproduct $\Pi$ takes the form
\begin{equation}\label{eq:pi2bar}
\langle \Pi(f_1, \ldots, f_n) , f_{n+1} \rangle = \sum_{K = K^1 \times K^2} a_{K} \prod_{j=1}^{n+1} \langle f_j, u_{j, K^1} \otimes u_{j, K^2} \rangle,
\end{equation}
where the functions $u_{j, K^1}$ and $u_{j, K^2}$ are like in \eqref{eq:Spi}.
The coefficients are assumed to satisfy
$$
\| (a_{K} ) \|_{\BMO_{\operatorname{prod}}} = \sup_{\Omega} \Big(\frac{1}{|\Omega|} \sum_{K\subset \Omega} |a_{K}|^2 \Big)^{1/2} \le 1,
$$
where the supremum is over open sets $\Omega \subset \R^d = \R^{d_1} \times \R^{d_2}$ with $0 < |\Omega| < \infty$.
As already discussed the $H^1$-$\BMO$ duality works also in bi-parameter (see again \cite[Equation (4.1)]{MO}):
\begin{equation}\label{eq:BiParH1BMO}
\sum_{K} |a_{K}| |b_{K}| \lesssim \| (a_{K} ) \|_{\BMO_{\operatorname{prod}}} \Big\| \Big( \sum_{K} |b_{K}|^2 \frac{1_{K}}{|K|} \Big)^{1/2} \Big \|_{L^1}.
\end{equation}

We are ready to bound the full paraproducts.
\begin{thm}
Suppose $\Pi$ is an $n$-linear bi-parameter full paraproduct, $1 < p_1, \ldots, p_n \le \infty$ and $1/p = \sum_{i=1}^n 1/p_i> 0$. Then we have
$$
\|\Pi(f_1, \ldots, f_n)w \|_{L^p} \lesssim \prod_{i=1}^n \|f_i w_i\|_{L^{p_i}} 
$$
for all multilinear bi-parameter weights $\vec w \in A_{\vec p}$. 
\end{thm}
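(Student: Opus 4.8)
\emph{Proof proposal.} The plan is to dualise and reduce everything, in one stroke, to the square function $A_{1,0}$ of Theorem~\ref{thm:thm3}; this is the simplest of the model operator arguments since the product $\BMO$ duality handles both parameters simultaneously and there is no complexity to track. Fix $\vec p = (p_1,\ldots,p_n)$ with $1 < p_i < \infty$ and $p > 1$, which is enough by extrapolation, Theorem~\ref{thm:ext}. Take $f_{n+1}$ with $\|f_{n+1}w^{-1}\|_{L^{p'}} \le 1$ and assume $f_j \in L^\infty_c$. Applying the bi-parameter $H^1$-$\BMO$ duality \eqref{eq:BiParH1BMO} to \eqref{eq:pi2bar} with $b_K = \prod_{j=1}^{n+1}\langle f_j, u_{j,K^1}\otimes u_{j,K^2}\rangle$ and the normalisation $\|(a_K)\|_{\BMO_{\operatorname{prod}}}\le 1$ gives
\[
|\langle \Pi(f_1,\ldots,f_n), f_{n+1}\rangle| \lesssim \Big\|\Big(\sum_K |b_K|^2\frac{1_K}{|K|}\Big)^{1/2}\Big\|_{L^1}.
\]

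Next I would identify the right-hand side as a symmetric form of $A_{1,0}$. Let $j_1$ be the index with $u_{j_1,K^1} = h_{K^1}$ and $j_2$ the index with $u_{j_2,K^2} = h_{K^2}$; every other tensor factor is a plain average, $\langle f_j, u_{j,K^1}\otimes u_{j,K^2}\rangle = \langle f_j\rangle_K$. Passing from Haar coefficients to martingale differences in each parameter via \eqref{eq:HaarMart} and using $|\langle \Delta^1_{K^1}g, h_{K^1}\rangle_1| \le |K^1|^{1/2}\langle|\Delta^1_{K^1}g|\rangle_{K^1,1}$ together with its obvious counterparts, one gets the pointwise bound
\[
\frac{|b_K|}{|K|^{1/2}} \le \langle|\Delta^1_{K^1,0}f_{j_1}|\rangle_K\,\langle|\Delta^2_{K^2,0}f_{j_2}|\rangle_K\prod_{j\ne j_1,j_2}\langle|f_j|\rangle_K,
\]
where if $j_1 = j_2$ the two one-parameter blocks merge into the full block $\langle|\Delta_{K,0}f_{j_1}|\rangle_K$. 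Hence $\big(\sum_K |b_K|^2 1_K/|K|\big)^{1/2} \le A_{1,0}(f_1,\ldots,f_{n+1})$, one of the symmetric forms of the square function in Theorem~\ref{thm:thm3} applied to the $(n+1)$ functions $f_1,\ldots,f_{n+1}$. The various cases ($j_1 = j_2$, $j_1\neq j_2$, and $j_1$ or $j_2$ equal to $n+1$) only change which symmetric form appears, and all are covered.

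Finally, I would use the weight identity \eqref{eq:eq12}: the tuple $(w_1,\ldots,w_n,w^{-1})$ lies in $A_{(p_1,\ldots,p_n,p')}$, its target weight is $ww^{-1} = 1$ and its target exponent is $1/p + 1/p' = 1$. Applying Theorem~\ref{thm:thm3} to $A_{1,0}(f_1,\ldots,f_{n+1})$ with this data yields
\[
\Big\|\Big(\sum_K |b_K|^2\frac{1_K}{|K|}\Big)^{1/2}\Big\|_{L^1} \le \|A_{1,0}(f_1,\ldots,f_{n+1})\|_{L^1} \lesssim \Big[\prod_{j=1}^n\|f_j w_j\|_{L^{p_j}}\Big]\|f_{n+1}w^{-1}\|_{L^{p'}} \le \prod_{j=1}^n\|f_j w_j\|_{L^{p_j}},
\]
and taking the supremum over admissible $f_{n+1}$ finishes the proof. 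I do not expect any genuine obstacle here; the only care needed is bookkeeping the symmetric forms of $A_{1,0}$ and the routine Haar-to-martingale conversion, both of which are entirely parallel to the treatment of shifts.
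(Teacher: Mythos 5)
Your proposal is correct and is essentially the same argument as in the paper: dualize against $f_{n+1}$ with $\|f_{n+1}w^{-1}\|_{L^{p'}}\le 1$, apply the bi-parameter $H^1$-$\BMO$ duality \eqref{eq:BiParH1BMO}, recognize the resulting square function as one of the symmetric forms of $A_{1,0}$ applied to $(f_1,\ldots,f_{n+1})$, and conclude via Theorem~\ref{thm:thm3} with the weight tuple $(w_1,\ldots,w_n,w^{-1})\in A_{(p_1,\ldots,p_n,p')}$ from \eqref{eq:eq12}. The paper just works with a concrete representative of $\Pi$ where your abstract $j_1,j_2$ indices are fixed, but the content is identical.
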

\begin{proof}
We use duality to always reduce to one of the operators of type $A_{1}$ in Theorem \ref{thm:thm3}.
Fix some $\vec p = (p_1, \ldots, p_n)$ with $1 < p_i < \infty$ and $p > 1$, which is enough by extrapolation. We will dualise using $f_{n+1}$ with $\|f_{n+1}w^{-1}\|_{L^{p'}} \le 1$. The particular form of $\Pi$ does not matter -- it only affects the form of the operator $A_1$ we will get.
We may, for example, look at
$$
\Pi(f_1, \ldots, f_n) = \sum_{K = K^1 \times K^2} a_K \Big \langle f_{1}, h_{K^1} \otimes \frac{1_{K^2}}{|K^2|} \Big\rangle
 \Big \langle f_{2}, \frac{1_{K^1}}{|K^1|} \otimes h_{K^2} \Big\rangle  \prod_{j=3}^{n} \langle f_j \rangle_K \cdot \frac{1_K}{|K|}.
$$
We have
\begin{align*}
|\langle \Pi(f_1, \ldots, f_n), f_{n+1} \rangle| \le \sum_{K} |a_K| \Big| \Big \langle f_{1}, h_{K^1} \otimes \frac{1_{K^2}}{|K^2|} \Big\rangle
 \Big \langle f_{2}, \frac{1_{K^1}}{|K^1|} \otimes h_{K^2} \Big\rangle \Big|  \prod_{j=3}^{n+1} \langle |f_j| \rangle_K.
\end{align*}
We now apply the unweighted $H^1$-$\BMO$ duality estimate from above to bound this with
\begin{align*}
\Big\| \Big( \sum_{K} \langle | \Delta_{K^1}^1 f_1 | \rangle_K^2
 \langle | \Delta_{K^2}^2 f_2 | \rangle_K^2  \prod_{j=3}^{n+1} \langle |f_j| \rangle_K^2 1_K \Big)^{\frac{1}{2}} \Big\|_{L^1}.
\end{align*}
Recalling \eqref{eq:eq12} it remains to apply Theorem \ref{thm:thm3} with a suitable $A_1(f_1, \ldots, f_{n+1})$.
\end{proof}

\section{Singular integrals}\label{sec:SIOs}
Let $\omega$ be a modulus of continuity: an increasing and subadditive function with $\omega(0) = 0$. 
A relevant quantity is
the modified Dini condition
\begin{equation}\label{eq:Dini}
\|\omega\|_{\operatorname{Dini}_{\alpha}} := \int_0^1 \omega(t) \Big( 1 + \log \frac{1}{t} \Big)^{\alpha} \frac{dt}{t}, \qquad \alpha \ge 0.
\end{equation}
In practice, the quantity \eqref{eq:Dini} arises as follows:
\begin{equation}\label{eq:diniuse}
\sum_{k=1}^{\infty} \omega(2^{-k}) k^{\alpha} = \sum_{k=1}^{\infty} \frac{1}{\log 2} \int_{2^{-k}}^{2^{-k+1}} \omega(2^{-k}) k^{\alpha} \frac{dt}{t} \lesssim \int_0^1 \omega(t) \Big( 1 + \log \frac{1}{t} \Big)^{\alpha} \frac{dt}{t}.
\end{equation}

We define what it means to be an $n$-linear bi-parameter SIO. 
Let $\scrF_{d_i}$ denote the space of finite linear combinations of indicators of cubes in $\R^{d_i}$, and let
$\scrF$ denote the space of finite linear combinations of indicators of rectangles in $\R^{d}$. 
Suppose that we have $n$-linear operators $T^{j_1*,j_2*}_{1,2}$, $j_1,j_2 \in \{0, \dots, n\}$, each mapping
$\scrF \times \dots \times \scrF$ into locally integrable functions. We denote $T=T^{0*,0*}$ and assume that the 
operators $T^{j_1*,j_2*}_{1,2}$ satisfy the duality relations as described in Section \ref{sec:adjoints}.

Let $\omega_i$ be a modulus of continuity on $\R^{d_i}$.
Assume $f_j = f_j^1 \otimes f_j^2$, $j = 1, \ldots, n+1$, where $f_{j}^i \in \scrF_{d_i}$.

\subsection*{Bi-parameter SIOs}
\subsubsection*{Full kernel representation}
Here we assume that given $m \in \{1,2\}$ there exist $j_1, j_2 \in \{1, \ldots, n+1\}$ so that
$\operatorname{spt} f_{j_1}^m \cap \operatorname{spt} f_{j_2}^m = \emptyset$.
In this case we demand that
$$
\langle T(f_1, \ldots, f_n), f_{n+1}\rangle = \int_{\R^{(n+1)d}}  K(x_{n+1},x_1, \dots, x_n)\prod_{j=1}^{n+1} f_j(x_j) \ud x,
$$
where
$$
K \colon \R^{(n+1)d} \setminus \{ (x_1, \ldots, x_{n+1}) \in \R^{(n+1)d}\colon x_1^1 = \cdots =  x_{n+1}^1 \textup{ or }  x_1^2 = \cdots =  x_{n+1}^2\} \to \C
$$
is a kernel satisfying a set of estimates which we specify next. 

The kernel $K$ is assumed to satisfy the size estimate
\begin{displaymath}
|K(x_{n+1},x_1, \dots, x_n)| \lesssim \prod_{m=1}^2 \frac{1}{\Big(\sum_{j=1}^{n} |x_{n+1}^m-x_j^m|\Big)^{d_mn}}.
\end{displaymath}

We also require the following continuity estimates. For example, we require that we have
\begin{align*}
|K(x_{n+1}, x_1, \ldots, x_n)-&K(x_{n+1},x_1, \dots, x_{n-1}, (c^1,x^2_n))\\
&-K((x_{n+1}^1,c^2),x_1, \dots, x_n)+K((x_{n+1}^1,c^2),x_1, \dots, x_{n-1},  (c^1,x^2_n))| \\
&\qquad \lesssim \omega_1 \Big( \frac{|x_{n}^1-c^1| }{ \sum_{j=1}^{n} |x_{n+1}^1-x_j^1|} \Big) 
\frac{1}{\Big(\sum_{j=1}^{n} |x_{n+1}^1-x_j^1|\Big)^{d_1n}} \\
&\qquad\times
\omega_2 \Big( \frac{|x_{n+1}^2-c^2| }{ \sum_{j=1}^{n} |x_{n+1}^2-x_j^2|} \Big) 
\frac{1}{\Big(\sum_{j=1}^{n} |x_{n+1}^2-x_j^2|\Big)^{d_2n}}
\end{align*}
whenever $|x_n^1-c^1| \le 2^{-1} \max_{1 \le i \le n} |x_{n+1}^1-x_i^1|$
and $|x_{n+1}^2-c^2| \le 2^{-1} \max_{1 \le i \le n} |x_{n+1}^2-x_i^2|$.
Of course, we also require all the other natural symmetric estimates, where $c^1$ can be in any of the given $n+1$ slots and similarly for $c^2$. There
are $(n+1)^2$ different estimates.

Finally, we require the following mixed continuity and size estimates. For example, we ask that
\begin{align*}
|K(x_{n+1}&, x_1, \ldots, x_n)-K(x_{n+1},x_1, \dots, x_{n-1}, (c^1,x^2_n))| \\
& \lesssim \omega_1 \Big( \frac{|x_{n}^1-c^1| }{ \sum_{j=1}^{n} |x_{n+1}^1-x_j^1|} \Big) 
\frac{1}{\Big(\sum_{j=1}^{n} |x_{n+1}^1-x_j^1|\Big)^{d_1n}} \cdot  \frac{1}{\Big(\sum_{j=1}^{n} |x_{n+1}^2-x_j^2|\Big)^{d_2n}}
\end{align*}
whenever $|x_n^1-c^1| \le 2^{-1} \max_{1 \le i \le n} |x_{n+1}^1-x_i^1|$. Again, we also require all the other natural symmetric estimates.
\subsubsection*{Partial kernel representations}
Suppose now only that there exist $j_1, j_2 \in \{1, \ldots, n+1\}$ so that
$\operatorname{spt} f_{j_1}^1 \cap \operatorname{spt} f_{j_2}^1 = \emptyset$.
 Then we assume that
$$
\langle T(f_1, \ldots, f_n), f_{n+1}\rangle = \int_{\R^{(n+1)d_1}} K_{(f_j^2)}(x_{n+1}^1, x_1^1, \ldots, x_n^1) \prod_{j=1}^{n+1} f_j^1(x^1_j) \ud x^1,
$$
where $K_{(f_j^2)}$ is a one-parameter $\omega_1$-Calder\'on--Zygmund kernel but with a constant depending on the fixed functions $f_1^2, \ldots, f_{n+1}^2$.
For example, this means that the size estimate takes the form
$$
|K_{(f_j^2)}(x_{n+1}^1, x_1^1, \ldots, x_n^1)| \le C(f_1^2, \ldots, f_{n+1}^2) \frac{1}{\Big(\sum_{j=1}^{n} |x_{n+1}^1-x_j^1|\Big)^{d_1n}}.
$$
The continuity estimates are analogous.

We assume the following $T1$ type control on the constant $C(f_1^2, \ldots, f_{n+1}^2)$. We have
\begin{equation*}\label{eq:PKWBP}
C(1_{I^2}, \ldots, 1_{I^2}) \lesssim |I^2|
\end{equation*}
and
\begin{equation}\label{eq:pest}
C(a_{I^2}, 1_{I^2}, \ldots, 1_{I^2}) + C(1_{I^2}, a_{I^2}, 1_{I^2}, \ldots, 1_{I^2}) + \cdots + C(1_{I^2}, \ldots, 1_{I^2}, a_{I^2}) \lesssim |I^2|
\end{equation}
for all cubes $I^2 \subset \R^{d_2}$
and all functions $a_{I^2} \in \scrF_{d_2}$ satisfying $a_{I^2} = 1_{I^2}a_{I^2}$, $|a_{I^2}| \le 1$ and $\int a_{I^2} = 0$.

Analogous partial kernel representation on the second parameter is assumed when $\operatorname{spt} f_{j_1}^2 \cap \operatorname{spt} f_{j_2}^2 = \emptyset$
for some $j_1, j_2$.

\begin{defn}
If $T$ is an $n$-linear operator with full and partial kernel representations as defined above, we call $T$ an $n$-linear bi-parameter $(\omega_1, \omega_2)$-SIO.
\end{defn}

\subsection*{Bi-parameter CZOs}
We say that $T$ satisfies the weak boundedness property if
\begin{equation*}\label{eq:2ParWBP}
|\langle T(1_R, \ldots, 1_R), 1_R \rangle| \lesssim |R|
\end{equation*}
for all rectangles $R = I^1 \times I^2 \subset \R^{d} = \R^{d_1} \times \R^{d_2}$.

An SIO $T$ satisfies the diagonal BMO assumption if the following holds. For all rectangles $R = I^1 \times I^2 \subset \R^{d} = \R^{d_1} \times \R^{d_2}$
and functions $a_{I^i}\in \scrF_{d_i}$ with $a_{I^i} = 1_{I^i}a_{I^i}$, $|a_{I^i}| \le 1$ and $\int a_{I^i} = 0$ we have
\begin{equation*}\label{eq:DiagBMO}
|\langle T(a_{I^1} \otimes 1_{I^2}, 1_R, \ldots, 1_R), 1_R \rangle| + \cdots +  |\langle T(1_R, \ldots, 1_R), a_{I^1} \otimes 1_{I^2} \rangle| \lesssim |R|
\end{equation*}
and
$$
|\langle T(1_{I^1} \otimes a_{I^2}, 1_R, \ldots, 1_R), 1_R \rangle| + \cdots +  |\langle T(1_R, \ldots, 1_R), 1_{I^1} \otimes a_{I^2} \rangle| \lesssim |R|.
$$

An SIO $T$ satisfies the product BMO assumption if it holds
$$S(1, \ldots, 1) \in \BMO_{\textup{prod}}$$ for all the $(n+1)^2$ adjoints $S = T^{j_1*, j_2*}_{1,2}$.
This can be interpreted in the sense that
$$
\| S(1, \ldots, 1) \|_{\BMO_{\operatorname{prod}}} = \sup_{\calD = \calD^1 \times \calD^2} \sup_{\Omega} \Big(\frac{1}{|\Omega|} \sum_{ \substack{ R = I^1 \times I^2 \in \calD \\
R \subset \Omega}} |\langle S(1, \ldots, 1), h_R \rangle|^2 \Big)^{1/2} < \infty,
$$
where the supremum is over all dyadic grids $\calD^i$ on $\R^{d_i}$ and
open sets $\Omega \subset \R^d = \R^{d_1} \times \R^{d_2}$ with $0 < |\Omega| < \infty$, and the pairings
$\langle S(1, \ldots, 1), h_R\rangle$ can be defined, in a natural way, using the kernel representations.

\begin{defn}\label{defn:CZO}
An $n$-linear  bi-parameter $(\omega_1, \omega_2)$-SIO $T$
satisfying the weak boundedness property, the diagonal BMO assumption and the product BMO assumption is called an $n$-linear bi-parameter
$(\omega_1, \omega_2)$-Calder\'on--Zygmund operator ($(\omega_1, \omega_2)$-CZO). 
\end{defn}

\subsection*{Dyadic representation theorem}
In Section \ref{sec:dmo} we have introduced the three different dyadic model operators (DMOs).
In this section we explain how and why the DMOs are linked to the CZOs. Before
stating the known representation theorem, to aid the reader, we first outline the main structure and idea of representation theorems -- for the lenghty details
in this generality see \cite{AMV}.

\textbf{Step 1.}
There is a natural probability space $\Omega = \Omega_1 \times \Omega_2$, the details of which are not relevant for us here (but see \cite{Hy1}),
so that to each $\sigma = (\sigma_1, \sigma_2) \in \Omega$
we can associate a random collection of dyadic rectangles $\calD_{\sigma} = \calD_{\sigma_1} \times \calD_{\sigma_2}$. 
The starting point is the martingale difference decomposition
\begin{equation*}
\langle T(f_1, \ldots, f_n),f_{n+1} \rangle
= \sum_{j_1, j_2 =1}^{n+1}  \E_{\sigma} \Sigma_{j_1, j_2, \sigma} + \E_{\sigma} \operatorname{Rem}_{\sigma},
\end{equation*}
where
$$
\Sigma_{j_1, j_2, \sigma} = \sum_{ \substack{ R_1, \ldots, R_{n+1} \\ \ell(I_{i_1}^1) > \ell(I_{j_1}^1) \textup{ for } i_1 \ne j_1
 \\ \ell(I_{i_2}^2) > \ell(I_{j_2}^2) \textup{ for } i_2 \ne j_2}} \langle T(\Delta_{R_1}f_1, \ldots, \Delta_{R_n}f_n),\Delta_{R_{n+1}}f_{n+1} \rangle
$$
and $R_1 = I_1^1 \times I_1^2, \ldots, R_{n+1} = I_{n+1}^1 \times I_{n+1}^2 \in \calD_\sigma = \calD_{\sigma_1} \times \calD_{\sigma_2}$.
Notice how we have already started the proof working parameter by parameter.
At this point the randomization is not yet important: it is used at a later point in the proof to find suitable common parents
for dyadic cubes. Looking at the definition of shifts this is clearly critical: everything is organised under
the common parent $K$ and they cannot be arbitrarily large.

\textbf{Step 2.} There are $(n+1)^2$ main terms $\Sigma_{j_1, j_2, \sigma}$ -- these are similar to each
other and all of them produce shifts, partial paraproducts and \emph{exactly one} full paraproduct.
For example, a further parameter by parameter $T1$ style decomposition of $\Sigma_{n, n+1, \sigma}$ produces the full paraproduct
\begin{align*}
&\sum_{R = K^1 \times K^2} \langle T(1, \ldots, 1, h_{K^1} \otimes 1), &1 \otimes h_{K^2} \rangle
\prod_{j=1}^{n-1} \langle f_j \rangle_{R}\Big \langle f_n, h_{K^1} \otimes \frac{1_{K^2}}{|K^2|} \Big\rangle
\Big \langle f_{n+1}, \frac{1_{K^1}}{|K^1|} \otimes h_{K^2} \Big\rangle,
\end{align*}
where
$$
\langle T(1, \ldots, 1, h_{K^1} \otimes 1), 1 \otimes h_{K^2} \rangle =  \langle T^{n*}_1(1, \ldots, 1), h_R \rangle.
$$
For the quantitative part the product $\BMO$ assumption of $T^{n*}_1$ is critical here, and the remaining
product $\BMO$ assumptions are used to control the full paraproducts coming
from the other main terms.

The shifts and partial paraproducts structurally arise from the $T1$ decomposition combined with
probability. Again, the randomization is simply used to find suitably sized common parents. After this completely
structural part (for full details see \cite{AMV}), the focus is on providing
estimates for the coefficients, like the coefficient $a_{K, (R_j)}$ of the shifts.
\emph{As in the full paraproduct case above, it is important to understand that the coefficients
always have a concrete form in terms of pairings involving $T$ and various Haar functions.}
These pairings are estimated in various ways:
\begin{itemize}
	\item The shift coefficients are handled with kernel estimates only (various size and continuity estimates).
	Only in the part $\operatorname{Rem}_{\sigma}$, which we have not yet discussed,
	also the weak boundedness is used to handle the diagonal case, where kernel
	estimates are not valid.
	\item In the partial paraproduct case a size estimate does not suffice, as there is not enough
	cancellation. A more refined $\BMO$ estimate needs to be proved -- this is done via a duality argument.
	This duality is the source of the atoms $a_I$ appearing in some of the assumptions -- e.g. in \eqref{eq:pest}.
\end{itemize}

\textbf{Step 3.}
The final step is to deal with the remainder $\operatorname{Rem}_{\sigma}$. This only
produces shifts and partial paraproducts.
Another difference to the main terms is that all the diagonal parts of the summation are here -- to deal
with them we need to assume the weak boundedness property and the diagonal $\BMO$ assumptions.

\begin{rem}\label{rem:mrem}
	An $m$-parameter representation theorem is structurally identical: the pairing
	$\langle T(f_1, \ldots, f_n),f_{n+1} \rangle$ is split into $(n+1)^m$ main terms
	and the remainder. These are then further split into shifts, partial paraproducts and full paraproducts.
	The full paraproduct is produced parameter by parameter, as it is in the bi-parameter case, and this produces partial paraproducts, where the
	paraproduct component can vary from being $1$-parameter to being $(m-1)$-parameter.
	The definition of a CZO is adjusted so that all of the appearing
	coefficients of the appearing model operators involving $T$ and Haar functions can be estimated.
	For the linear $m$-parameter representation theorem see Ou \cite{Ou} -- this establishes the appropriate
	definition of a multi-parameter CZO.
	We discuss the $m$-parameter case in more detail in Section \ref{sec:multi}. The point there is the following:
	while the representation theorem itself is straightforward, some
	of the estimates of Section \ref{sec:dmo} are harder in $m$-parameter. 
\end{rem}

In the paper \cite{AMV}, among other things, a dyadic representation theorem for $n$-linear bi-parameter CZOs was proved.
The minimal regularity required is $\omega_i \in \operatorname{Dini}_{\frac{1}{2}}$, but then the dyadic representation is in terms
of certain modified versions of the model operators we have presented, and bounded, in this paper. It appears to be difficult to prove weighted bounds for the modified operators
with the optimal dependency on the complexity. Instead, we will rely on a lemma, which says that all of the modified operators can be written
as suitable sums of the standard model operators. This step essentially outright loses $\frac{1}{2}$ of kernel regularity, and puts
us in competition to obtain our weighted bounds with $\omega_i \in \operatorname{Dini}_{1}$.
The bilinear bi-parameter representation theorem with the usual H\"older type kernel regularity $w_i(t) = t^{\alpha_i}$ appeared first in \cite{LMV}. We now state a representation theorem that we will rely on.

A consequence of \cite[Theorem 5.35]{AMV} and \cite[Lemma 5.12]{AMV} is the following. 
\begin{prop}
Suppose $T$ is an $n$-linear bi-parameter $(\omega_1, \omega_2)$-CZO.
Then we have
$$
\langle T(f_1,\ldots,f_n), f_{n+1} \rangle= C_T \E_{\sigma} \sum_{u = (u_1, u_2) \in \N^2}  \omega_1(2^{-u_1})\omega_2(2^{-u_2}) \langle U_{u, \sigma}(f_1,\ldots,f_n), f_{n+1} \rangle,
$$
where $C_T$ enjoys a linear bound with respect to the CZO quantities and
$U_{u, \sigma}$ denotes some $n$-linear bi-parameter dyadic operator (defined in the grid $\calD_{\sigma}$) with the following property. We have that $U_u = U_{u, \sigma}$ can be 
decomposed using the standard dyadic model operators as follows:
\begin{equation}\label{eq:eq10}
U_{u}
= C \sum_{i_1=0}^{u_1-1} \sum_{i_2=0}^{u_2-1} V_{i_1,i_2},
\end{equation}
where each $V = V_{i_1,i_2}$ is a dyadic model operator (a shift, a partial paraproduct or a full paraproduct)
of complexity $k^m_{j, V}$, $j \in \{1, \ldots, n+1\}$, $m \in \{1,2\}$,
satisfying 
$$
k^{m}_{j, V} \le u_m.
$$
\end{prop}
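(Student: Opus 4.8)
The plan is to combine the two cited results of \cite{AMV} and then carry out the complexity bookkeeping. First I would apply the dyadic representation theorem \cite[Theorem 5.35]{AMV} to the $n$-linear bi-parameter $(\omega_1,\omega_2)$-CZO $T$: on the probability space $\Omega=\Omega_1\times\Omega_2$ it produces, for each $\sigma=(\sigma_1,\sigma_2)\in\Omega$, the random grid $\calD_\sigma=\calD_{\sigma_1}\times\calD_{\sigma_2}$ and an identity
$$
\langle T(f_1,\ldots,f_n), f_{n+1}\rangle
= C_T\, \E_\sigma \sum_{u=(u_1,u_2)\in\N^2} \omega_1(2^{-u_1})\omega_2(2^{-u_2}) \langle \wt U_{u,\sigma}(f_1,\ldots,f_n), f_{n+1}\rangle,
$$
where $C_T$ depends linearly on the CZO quantities of $T$ and each $\wt U_{u,\sigma}$ is a \emph{modified} $n$-linear bi-parameter dyadic model operator (defined in $\calD_\sigma$) whose complexities $k^m_{j}$, $j\in\{1,\ldots,n+1\}$, $m\in\{1,2\}$, are all bounded by $u_m$. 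The modified operators here are precisely those produced by the martingale-difference and parameter-by-parameter $T1$ decomposition outlined in Steps 1--3 above, so that the product $\BMO$, diagonal $\BMO$ and weak boundedness assumptions of Definition \ref{defn:CZO} are exactly what guarantee the stated normalisations of the coefficients.

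Next I would invoke \cite[Lemma 5.12]{AMV}, which states that each such modified model operator admits a finite linear decomposition
$$
\wt U_{u,\sigma} = C \sum_{i_1=0}^{u_1-1}\sum_{i_2=0}^{u_2-1} V_{i_1,i_2},
$$
with $C$ an absolute constant and every $V=V_{i_1,i_2}$ one of the standard dyadic model operators of Section \ref{sec:dmo} in the grid $\calD_\sigma$ — a shift, a partial paraproduct, or a full paraproduct — having complexities $k^m_{j,V}\le u_m$ for all $j$ and $m$. Substituting this into the previous display, writing $U_{u,\sigma}$ for $\wt U_{u,\sigma}$ in this decomposed form \eqref{eq:eq10}, and absorbing the absolute combinatorial constant into $C_T$ (which leaves its linear dependence on the CZO data intact) gives the claimed identity.

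The only genuine point to verify is the bookkeeping, and this is where I expect the (mild) friction to lie: one must check that the class of modified operators output by \cite[Theorem 5.35]{AMV} is exactly the class handled by \cite[Lemma 5.12]{AMV}, and that the two complexity bounds chain correctly, i.e.\ that after the decomposition each $V_{i_1,i_2}$ still has complexity $\le u_m$ in parameter $m$ (and not merely $\le i_m$), as asserted. I would also flag — though it is not part of the formal identity — that this reduction is precisely the step costing half a derivative of kernel regularity: the eventual summability of the right-hand side against the weighted model operator bounds (polynomial in the complexity for shifts and full paraproducts, and $2^{\max_j k_j\beta}$ with arbitrarily small $\beta$ for partial paraproducts by Theorem \ref{thm:thm4}, applied via Theorem \ref{thm:thm3}) will force $\omega_i\in\operatorname{Dini}_1$ when the representation is used to prove Theorem \ref{thm:intro1}, rather than the $\operatorname{Dini}_{1/2}$ under which \cite[Theorem 5.35]{AMV} itself holds.
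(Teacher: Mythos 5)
Your proposal is correct and is essentially the paper's own argument: the paper itself states this proposition as ``a consequence of \cite[Theorem 5.35]{AMV} and \cite[Lemma 5.12]{AMV}'' without further proof, and you invoke exactly those two results in the same order, with the same bookkeeping on complexities and the same observation about the loss of half a derivative of kernel regularity.
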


\begin{rem}
We assumed that the operator $T$ and its adjoints are initially well-defined for finite linear combinations of indicators of rectangles. 
However, a careful proof of the representation theorem \cite{LMV} shows that this implies the boundedness of $T$ (for related details see also \cite{GH} and \cite{Hy3}).
Therefore, we do not need to worry about this detail any more at this point and we can work with general functions.
Moreover, we do not need to work with the CZOs directly --
after the representation theorem we only need to work with the dyadic model operators.
\end{rem}

\subsection*{Weighted estimates for CZOs}
In this paper we were able to prove a complexity free weighted estimate for the shifts. On the contrary, the weighted estimate for the partial paraproducts is
even exponential, however, with an arbitrarily small power. For these reasons, we can prove a weighted estimate with mild kernel regularity
for paraproduct free $T$, and otherwise we will deal with the standard kernel regularity $\omega_i(t) = t^{\alpha_i}$.
By paraproduct free we mean that the paraproducts in the dyadic representation of $T$ vanish, which could also be stated in terms of (both partial and full) ``$T1=0$'' type conditions (only the partial paraproducts, and not the full paraproducts, are problematic in terms of kernel regularity, of course).  In the paraproduct free case the reader can think of convolution form SIOs.
\begin{thm}
Suppose $T$ is an $n$-linear bi-parameter $(\omega_1, \omega_2)$-CZO.
For $1 < p_1, \ldots, p_n \le \infty$ and $1/p = \sum_i 1/p_i> 0$ we have
$$
\|T(f_1, \ldots, f_n)w \|_{L^p} \lesssim \prod_i \|f_i w_i\|_{L^{p_i}} 
$$
for all multilinear bi-parameter weights $\vec w \in A_{\vec p}$, if one of the following conditions hold.
\begin{enumerate}
\item $T$ is paraproduct free and $\omega_i \in \operatorname{Dini}_{1}$.
\item We have $\omega_i(t) = t^{\alpha_i}$ for some $\alpha_i \in (0,1]$.
\end{enumerate}
\end{thm}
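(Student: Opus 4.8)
The plan is to combine the dyadic representation theorem stated above with the weighted bounds already obtained for the individual model operators (the shift theorem, Theorem~\ref{thm:thm4} for partial paraproducts, and the full paraproduct theorem). First I would fix a single exponent tuple $\vec p = (p_1,\ldots,p_n)$ with $1 < p_i < \infty$ and $\sum_{i} 1/p_i < 1$, so that $p > 1$ as well; by extrapolation, Theorem~\ref{thm:ext}, it suffices to prove the estimate for this one tuple in order to obtain the full range $1 < p_i \le \infty$, $1/p = \sum_i 1/p_i > 0$. Fix also $\vec w \in A_{\vec p}$. Applying the representation, pulling $\E_\sigma$ outside the $L^p$-norm by Minkowski's integral inequality (all model-operator bounds below are uniform in $\sigma$), and using the triangle inequality in $L^p$, we arrive at
$$
\|T(f_1,\ldots,f_n)w\|_{L^p} \lesssim \sum_{u=(u_1,u_2)\in\N^2} \omega_1(2^{-u_1})\omega_2(2^{-u_2}) \sup_\sigma \|U_{u,\sigma}(f_1,\ldots,f_n)w\|_{L^p},
$$
and by \eqref{eq:eq10} each $U_u$ is a sum of at most $C u_1 u_2$ model operators, each a shift, a partial paraproduct, or a full paraproduct, with all complexities bounded by $\max(u_1,u_2)$.

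For case (1), when $T$ is paraproduct free only shifts occur, and since the shift estimate is complexity free we get $\sup_\sigma \|U_{u,\sigma}(f_1,\ldots,f_n)w\|_{L^p} \lesssim u_1 u_2 \prod_i \|f_i w_i\|_{L^{p_i}}$. It then remains to observe, via \eqref{eq:diniuse}, that
$$
\sum_{u\in\N^2} \omega_1(2^{-u_1})\omega_2(2^{-u_2}) u_1 u_2 = \Big(\sum_{u_1}\omega_1(2^{-u_1})u_1\Big)\Big(\sum_{u_2}\omega_2(2^{-u_2})u_2\Big) \lesssim \|\omega_1\|_{\operatorname{Dini}_1}\|\omega_2\|_{\operatorname{Dini}_1} < \infty.
$$

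For case (2) we have $\omega_i(2^{-u_i}) = 2^{-\alpha_i u_i}$ and all three types of model operators may appear. The shift and full paraproduct contributions are treated as above, producing the convergent sum $\sum_{u} 2^{-\alpha_1 u_1 - \alpha_2 u_2} u_1 u_2 < \infty$. For the partial paraproducts I would apply Theorem~\ref{thm:thm4} with a small parameter $\beta$: the partial paraproduct part of $U_u$ is bounded by $\lesssim_\beta u_1 u_2\, 2^{\beta\max(u_1,u_2)} \prod_i\|f_i w_i\|_{L^{p_i}} \le \lesssim_\beta u_1 u_2\, 2^{\beta(u_1+u_2)}\prod_i\|f_i w_i\|_{L^{p_i}}$. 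Choosing $\beta < \min(\alpha_1,\alpha_2)$ makes
$$
\sum_{u\in\N^2} 2^{-\alpha_1 u_1 - \alpha_2 u_2} u_1 u_2\, 2^{\beta(u_1+u_2)} = \Big(\sum_{u_1} u_1 2^{-(\alpha_1-\beta)u_1}\Big)\Big(\sum_{u_2} u_2 2^{-(\alpha_2-\beta)u_2}\Big) < \infty,
$$
and summing all three contributions yields the bound for our fixed $\vec p$; extrapolation then finishes the proof. (Alternatively, one can skip the extrapolation step and sum directly for any admissible tuple, using the $\min(1,p)$-power triangle inequality in $L^p$ when $p\le 1$, since the relevant series still converges geometrically.)

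The main obstacle is the exponential-in-complexity loss $2^{\beta\max_j k_j}$ in the partial paraproduct estimate of Theorem~\ref{thm:thm4}: everything hinges on $\beta$ being \emph{arbitrarily} small, which lets the geometric decay $2^{-\alpha_i u_i}$ of a H\"older modulus absorb it, but which is exactly what prevents us from allowing general $\operatorname{Dini}_1$ moduli once partial paraproducts are present. All other ingredients — the complexity-free shift and full paraproduct bounds, the Dini bookkeeping \eqref{eq:diniuse}, and the passage from a single $\vec p$ to the full range by extrapolation — are routine.
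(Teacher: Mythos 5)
Your proof is correct and follows essentially the same route as the paper's: fix an exponent tuple with $p>1$, invoke the dyadic representation theorem, use the complexity-free shift bound (respectively the $2^{\beta\max_j k_j}$ partial paraproduct bound with $\beta$ small relative to $\alpha_i$), sum via the $\operatorname{Dini}_1$ bookkeeping \eqref{eq:diniuse}, and extrapolate. You have also correctly identified the single real obstacle -- the exponential-in-complexity loss for partial paraproducts requiring an arbitrarily small $\beta$ -- which is precisely what forces the H\"older modulus in case (2).
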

\begin{proof}
Notice that in the paraproduct free case (1) by our results for the shifts we always have
$$
\|U_{u, \sigma}(f_1, \ldots, f_n)w \|_{L^p} \lesssim (1+u_1)(1+u_2)  \prod_i \|f_i w_i\|_{L^{p_i}},
$$
where the complexity dependency comes only from the decomposition \eqref{eq:eq10}.
We then take some $1 < p_1, \ldots, p_n < \infty$ with $p \in (1, \infty)$, use the dyadic representation theorem and conclude that
$T$ satisfies the weighted bound with these fixed exponents -- recall \eqref{eq:diniuse} and that $\omega_i \in \operatorname{Dini}_{1}$.
Finally, we extrapolate using Theorem \ref{thm:ext}.

The case of a completely general CZO with the standard kernel regularity is proved completely analogously. Just choose the exponent $\beta$
in the exponential complexity dependendency of the partial paraproducts to be small enough compared to $\alpha_1$ and $\alpha_2$.
\end{proof}

\section{Extrapolation}\label{app:app1}
This section is devoted to providing more details about Theorem \ref{thm:ext} in the multi-parameter setting. We also obtain the proof of the vector-valued
Proposition \ref{prop:vecvalmax}. We give the details in the bi-parameter case with the general case being similar.

 We begin with the following definitions. Given $\mu\in A_\infty(\R^{n+m})$, we say $w\in A_p(\mu)$ if $w>0$ a.e. and
\[
[w]_{A_p(\mu)}:=\sup_R\,  \langle w\rangle_R^{\mu} \left(\big\langle w^{-\frac 1{p-1}}\big\rangle_R^{\mu}\right)^{p-1}<\infty,\qquad 1<p<\infty.
\]
And we say $w\in A_1(\mu)$ if $w>0$ a.e. and
\[
[w]_{A_1(\mu)} = \sup_R \, \ave{w}_R^\mu \esssup_R w^{-1}<\infty.
\]

We begin with the following auxiliary result needed to build the required machinery.
This is an extension of Proposition \ref{prop:prop1}.
\begin{rem}
The so-called
three lattice theorem states that
there are lattices $\calD^m_j$ in $\R^{d_m}$, $m \in \{1,2\}$, $j \in \{1, \ldots, 3^{d_m}\}$, such that for every cube $Q^m \subset \R^{d_m}$
there exists a $j$ and $I^m \in \calD^m_j$ so that $Q^m \subset I^m$ and $|I^m| \sim |Q^m|$.
Given $\lambda \in A_\infty$ we have in particular that $\lambda$ is doubling: $\lambda(2R) \lesssim \lambda(R)$ for all rectangles $R$.
It then follows that also the non-dyadic variant $M^{\lambda}$ satisfies Proposition \ref{prop:prop1}.
\end{rem}
\begin{lem}\label{lem:lem5}
Let $\mu\in A_\infty$ and $w\in A_p(\mu)$, $1<p<\infty$. Then we have
\[
\| M^\mu f\|_{L^p(w\mu)}\lesssim \|f\|_{L^p(w\mu)}.
\]
\end{lem}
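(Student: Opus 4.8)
The plan is to deduce the estimate from the already-established Proposition~\ref{prop:prop1} (in its non-dyadic form, which is exactly what the preceding remark provides) by a pointwise domination argument, rather than by trying to run the one-parameter Muckenhoupt proof directly — the latter is awkward because the family of rectangles, equipped with $\mu$, is not a space of homogeneous type and the relevant maximal operator has no weak-type endpoint.

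The first step is the pointwise bound: for any $s\in(1,\infty)$ and any $w\in A_s(\mu)$,
\[
M^\mu f(x)\le [w]_{A_s(\mu)}^{1/s}\big(M^{w\mu}(|f|^s)(x)\big)^{1/s}.
\]
Indeed, for a rectangle $R\ni x$, writing $|f|=\big(|f|^s w\big)^{1/s}w^{-1/s}$ and applying H\"older's inequality with exponents $s,s'$ against $\mathrm{d}\mu$ gives $\ave{|f|}_R^\mu\le\big(\ave{|f|^s w}_R^\mu\big)^{1/s}\big(\ave{w^{-1/(s-1)}}_R^\mu\big)^{1/s'}$; the definition of $A_s(\mu)$ yields $\ave{w^{-1/(s-1)}}_R^\mu\le\big([w]_{A_s(\mu)}/\ave{w}_R^\mu\big)^{1/(s-1)}$, and since $\ave{|f|^s w}_R^\mu/\ave{w}_R^\mu=\ave{|f|^s}_R^{w\mu}$, taking the supremum over $R\ni x$ gives the claim. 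Next, since $w\in A_s(\mu)\subset A_\infty(\mu)$ and $\mu\in A_\infty$, transitivity of the $A_\infty$ condition shows that $w\mu$ is a Lebesgue $A_\infty$ weight, hence $w\mu\in A_q$ for some finite $q$; by Proposition~\ref{prop:prop1} together with the remark above, $M^{w\mu}$ is then bounded on $L^u(w\mu)$ for every $u\in(1,\infty)$. Combining these two facts, for any $q>s$,
\[
\|M^\mu f\|_{L^q(w\mu)}^q\le[w]_{A_s(\mu)}^{q/s}\big\|M^{w\mu}(|f|^s)\big\|_{L^{q/s}(w\mu)}^{q/s}\lesssim\big\||f|^s\big\|_{L^{q/s}(w\mu)}^{q/s}=\|f\|_{L^q(w\mu)}^q,
\]
so $M^\mu$ is bounded on $L^q(w\mu)$ whenever $q>s$ and $w\in A_s(\mu)$.

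It remains to pass from the open range $q>s$ to the endpoint $q=p$. For this I would invoke the self-improvement of the $A_p(\mu)$ class: since $\mu$ is doubling, $w\in A_p(\mu)$ implies $w\in A_s(\mu)$ for some $s\in(1,p)$ — in the product setting this reverse-H\"older type statement can be obtained by applying the one-parameter version on the fibres together with the fibrewise $A_\infty$ (equivalently $A_p$) characterization. Applying the previous step with such an $s$ and $q=p$ completes the proof. Essentially everything is an immediate application of H\"older's inequality or of Proposition~\ref{prop:prop1}; the only genuinely non-routine (though standard) ingredient is this self-improvement, and that is where I expect the only real work to lie.
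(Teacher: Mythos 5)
The pointwise bound you derive is correct, and the resulting estimate at exponents $q>s$ is correct modulo verifying $w\mu\in A_\infty(\R^{d_1}\times\R^{d_2})$. However, your route then hinges on the \emph{openness} of the product class $A_p(\mu)$ (that $w\in A_p(\mu)$ implies $w\in A_s(\mu)$ for some $s<p$), and this is precisely the step the paper's proof is designed to circumvent. The reason the paper does not take your route is that the openness of $A_p(\mu)$ over rectangles is not routine: the family of rectangles is not a space of homogeneous type, there is no Calder\'on--Zygmund decomposition, and the strong maximal function has no weak $(1,1)$ endpoint — so the standard reverse-H\"older machinery is unavailable at the product level. Your suggested fix (fibrewise openness plus a fibrewise $\leftrightarrow$ product equivalence for $A_p(\mu)$, analogous to \eqref{eq:eq28} but now with respect to $\mu$ rather than Lebesgue measure) is plausible, but the second half of that equivalence — passing from uniform slicewise $A_s(\mu(\cdot,x_2))$ and $A_s(\mu(x_1,\cdot))$ conditions back to the full product $A_s(\mu)$ condition — is itself a nontrivial product-weight theorem that would have to be proved, not invoked. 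You flag the self-improvement as the non-routine ingredient, which is honest, but you underestimate how much is being swept under that rug. The same comment applies, to a lesser extent, to the claim that "$w\in A_\infty(\mu)$ and $\mu\in A_\infty$ imply $w\mu\in A_\infty$ by transitivity": in the product setting this requires the slicewise argument that the paper spends a paragraph carrying out, and cannot simply be declared.

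The paper avoids all of this by using a different pointwise estimate (Lerner's two-maximal-function trick): writing $\sigma=w^{-1/(p-1)}$, one shows
\[
M^\mu f(x)\lesssim\Big(M^{w\mu}\big([M^{\sigma\mu}(f\sigma^{-1})]^{p-1}w^{-1}\big)(x)\Big)^{\frac1{p-1}},
\]
and then Proposition~\ref{prop:prop1} is invoked twice, once for $M^{w\mu}$ on $L^{p'}(w\mu)$ and once for $M^{\sigma\mu}$ on $L^{p}(\sigma\mu)$, so that everything happens at the \emph{given} exponent $p$. This iteration substitutes a second application of the already-proved $A_\infty$-weighted $L^s$ bound for the reverse-H\"older/openness input. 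After that, the only thing left to verify is that $w\mu$ and $\sigma\mu$ are product $A_\infty$ weights, which the paper does via the slice characterization. If you want to repair your argument, you should either switch to the two-maximal-function pointwise bound, or state and prove the $A_p(\mu)$ fibrewise $\leftrightarrow$ product equivalence and the resulting openness as separate lemmas; the former is significantly shorter.
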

\begin{proof}
Fix $x$ and $f\ge 0$ and denote $\sigma=w^{-\frac 1{p-1}}$. For an arbitrary rectangle $R \subset \R^d$ with $x\in R$ we have
\begin{align*}
\langle f \rangle_R^\mu &= \langle \sigma\rangle_R^\mu \left( \langle w\rangle_R^\mu\right)^{\frac 1{p-1}}\left( \langle w\rangle_R^\mu\right)^{-\frac 1{p-1}}\frac 1{\sigma\mu(R)}\int_R f\mu\\
&\le [w]_{A_p(\mu)}^{\frac 1{p-1}}    \left(M^{w\mu}    \big(   [M^{\sigma \mu}(f \sigma^{-1})]^{p-1} w^{-1} \big)(x)\right)^{\frac 1{p-1}}.
\end{align*}
The idea of the above pointwise estimate is from \cite{Le}.
If $w\mu, \sigma \mu\in A_\infty$, then by (the non-dyadic version of) Proposition \ref{prop:prop1} we have 
\begin{align*}
\| M^{\mu} f\|_{L^p(w\mu)}&\lesssim \left\| \left(M^{w\mu} \big(   [M^{\sigma \mu}(f \sigma^{-1})]^{p-1} w^{-1} \big) \right)^{\frac 1{p-1}}\right\|_{L^p(w\mu)}\\
&\lesssim \left\| \left(    [M^{\sigma \mu}(f \sigma^{-1})]^{p-1} w^{-1}   \right)^{\frac 1{p-1}}\right\|_{L^p(w\mu)}\\
&= \| M^{\sigma \mu}(f \sigma^{-1})\|_{L^p(\sigma\mu)}\lesssim \|f\|_{L^p(w\mu)}.
\end{align*}

Therefore, it remains to check that $w\mu, \sigma \mu\in A_\infty$. We only explicitly prove that $w\mu\in A_\infty$, since the other one is symmetric. First of all, write 
\[
\langle w\rangle_R^{\mu} \left(\big\langle w^{-\frac 1{p-1}}\big\rangle_R^{\mu}\right)^{p-1}\le [w]_{A_p(\mu)}
\]
in the form
\[
\langle w\mu\rangle_R \big\langle w^{-\frac 1{p-1}}\mu\big\rangle_R^{p-1}\le  [w]_{A_p(\mu)} \langle \mu\rangle_R^p.
\]
Then by the Lebesgue differentiation theorem, we have for all cubes $I^1 \subset \R^{d_1}$ that
\[
\langle w\mu\rangle_{I^1,1}(x_2) \big\langle w^{-\frac 1{p-1}}\mu\big\rangle_{I^1,1}^{p-1}(x_2) \le  [w]_{A_p(\mu)} \langle \mu\rangle_{I^1,1}^p(x_2),\quad x_2\in \R^{d_2}\setminus N_{I^1},
\]where $|N_{I^1}|=0$. By standard considerations there exists $N$ so that $|N| = 0$ and for all cubes $I^1 \subset \R^{d_1}$ we have
\[
\langle w\mu\rangle_{I^1,1}(x_2) \big\langle w^{-\frac 1{p-1}}\mu\big\rangle_{I^1,1}^{p-1}(x_2) \le  [w]_{A_p(\mu)} \langle \mu\rangle_{I^1,1}^p(x_2) ,\quad x_2\in \R^{d_2}\setminus N.
\]
In other words, $w(\cdot, x_2)\in A_p(\mu(\cdot, x_2))$ (uniformly) for all $x_2\in \R^m\setminus N$.
As $\mu \in A_{\infty}$ there exists $s < \infty$ so that $\mu \in A_s$.
 Then for all cubes
$I^1 \subset \R^{d_1}$ and arbitrary $E\subset I^1$ we have
\begin{align*}
\frac{|E|}{|I^1|}\lesssim \Big(\frac{\mu(\cdot, x_2)(E)}{\mu(\cdot, x_2)(I^1)} \Big)^{\frac{1}{s}}
\lesssim \Big(\frac{w\mu(\cdot, x_2)(E)}{w\mu(\cdot, x_2)(I^1)}\Big)^{\frac{1}{ps}},\quad {\rm {a.e.}} \, x_2\in \R^{d_2},
\end{align*}where the implicit constant is independent from $x_2$.
This means $w\mu(\cdot, x_2)\in A_\infty(\R^{d_1})$ uniformly for a.e. $x_2\in \R^{d_2}$. Likewise we can show that $w\mu(x_1,\cdot)\in A_\infty(\R^{d_2})$ uniformly for a.e. $x_1\in \R^{d_1}$. This completes the proof and we are done.
\end{proof}

Now we are ready to formulate the following version of Rubio de Francia algorithm.  
\begin{lem}
Let $\mu \in A_{\infty}$ and $p \in (1,\infty)$. Let $f$ be a non-negative function in $L^p(w\mu)$ for some $w\in A_p(\mu)$. Let $M^\mu_k$ be the $k$-th iterate of $M^\mu$, $M^\mu_0f=f$, and $\|M^\mu\|_{L^p(w\mu)} := \|M^\mu\|_{L^p(w\mu) \to L^p(w\mu)}$ be the norm of $M^\mu$ as a bounded operator on $L^p(w\mu)$. Define 
\[
Rf(x)= \sum_{k=0}^\infty \frac{M^\mu_k f}{(2\|M^\mu\|_{L^p(w\mu)})^k}.
\]
Then $f(x)\le Rf(x)$, $\|Rf\|_{L^p(w\mu)}\le 2 \|f\|_{L^p(w\mu)}$, and $Rf$ is an $A_1(\mu)$ weight with constant $[Rf]_{A_1(\mu)}\le 2 \|M^\mu\|_{L^p(w\mu)}$. 
\end{lem}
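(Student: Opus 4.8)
The plan is to run the standard Rubio de Francia iteration, now adapted to the background measure $\mu$. The very first point to settle is that the geometric factor $2\|M^\mu\|_{L^p(w\mu)}$ in the definition of $Rf$ is meaningful: since $\mu\in A_\infty$ and $w\in A_p(\mu)$, Lemma~\ref{lem:lem5} gives that $M^\mu\colon L^p(w\mu)\to L^p(w\mu)$ is bounded, so $N:=\|M^\mu\|_{L^p(w\mu)}$ is finite, and it is positive because $M^\mu$ is a nonzero operator. We may assume $f\not\equiv 0$, since otherwise $Rf\equiv 0$ and there is nothing to prove.

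The pointwise bound $f\le Rf$ is immediate, as the $k=0$ term of the defining series is $M^\mu_0f=f$ and all remaining terms are nonnegative. For the norm estimate I would apply the triangle inequality in $L^p(w\mu)$ term by term together with the iterated operator bound $\|M^\mu_kf\|_{L^p(w\mu)}\le N^k\|f\|_{L^p(w\mu)}$, and then sum the geometric series:
\[
\|Rf\|_{L^p(w\mu)}\le \sum_{k=0}^{\infty}\frac{\|M^\mu_kf\|_{L^p(w\mu)}}{(2N)^k}\le \Big(\sum_{k=0}^{\infty}\frac{N^k}{(2N)^k}\Big)\|f\|_{L^p(w\mu)}=2\,\|f\|_{L^p(w\mu)}.
\]
In particular $Rf\in L^p(w\mu)$, hence $Rf<\infty$ $w\mu$-a.e., and so Lebesgue-a.e.

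For the $A_1(\mu)$ estimate the key observation is that $M^\mu$ is countably subadditive on nonnegative functions: by monotone convergence $\ave{\sum_k g_k}_R^\mu=\sum_k\ave{g_k}_R^\mu$ for nonnegative $g_k$, whence $M^\mu\big(\sum_k g_k\big)\le \sum_k M^\mu g_k$ pointwise. Applying this to the series defining $Rf$ and using $M^\mu(M^\mu_kf)=M^\mu_{k+1}f$, I would get
\[
M^\mu(Rf)\le \sum_{k=0}^{\infty}\frac{M^\mu_{k+1}f}{(2N)^k}=2N\sum_{k=1}^{\infty}\frac{M^\mu_kf}{(2N)^k}\le 2N\cdot Rf
\]
everywhere. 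Here $Rf>0$ everywhere: indeed $Rf\ge (2N)^{-1}M^\mu f$, and $M^\mu f(x)=0$ would force $\int_R f\ud\mu=0$ for every rectangle $R\ni x$, i.e. $f=0$ a.e., contrary to $f\not\equiv 0$. Thus the displayed inequality is non-degenerate, and for every rectangle $R$ and a.e. $x\in R$ it gives $\ave{Rf}_R^\mu\le M^\mu(Rf)(x)\le 2N\,Rf(x)$, so $Rf(x)^{-1}\le 2N/\ave{Rf}_R^\mu$ a.e. on $R$; taking $\esssup_R$ and then the supremum over rectangles $R$ yields $[Rf]_{A_1(\mu)}=\sup_R\ave{Rf}_R^\mu\,\esssup_R(Rf)^{-1}\le 2N=2\|M^\mu\|_{L^p(w\mu)}$.

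There is no real obstacle in this argument: the only inputs that deserve a word are the appeal to Lemma~\ref{lem:lem5} for $N<\infty$ — which is exactly where the hypotheses $\mu\in A_\infty$ and $w\in A_p(\mu)$ are used — and the countable subadditivity of $M^\mu$, which follows from monotone convergence just as for Lebesgue measure. Everything else is the usual Rubio de Francia bookkeeping.
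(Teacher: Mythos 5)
Your argument is correct and follows essentially the same Rubio de Francia iteration as the paper: the pointwise bound and norm estimate are immediate, and the $A_1(\mu)$ constant is extracted from $M^\mu(Rf)\le 2\|M^\mu\|_{L^p(w\mu)}\,Rf$ exactly as in the paper's proof. You merely spell out details the paper leaves implicit (finiteness of the operator norm via Lemma~\ref{lem:lem5}, countable subadditivity of $M^\mu$ via monotone convergence, and strict positivity of $Rf$), all of which are accurate.
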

\begin{proof}
The statements $f(x)\le Rf(x)$ and $\|Rf\|_{L^p(w\mu)}\le 2 \|f\|_{L^p(w\mu)}$ are obvious. Since 
\[
M^\mu(Rf)\le \sum_{k=0}^\infty \frac{M^\mu_{k+1} f}{(2\|M^\mu\|_{L^p(w\mu)})^k}\le 2\|M^\mu\|_{L^p(w\mu)} Rf,
\]
we have
\[
[Rf]_{A_1(\mu)}\le \sup_R \big(\inf_R M^\mu(Rf) \big)  \big(\operatornamewithlimits{ess\,inf}_R  Rf \big) ^{-1}\le 2\|M^\mu\|_{L^p(w\mu)}.
\]
We are done.
\end{proof}
With the above Rubio de Francia algorithm at hand, we are able to prove the bi-parameter version of \cite[Theorem 3.1]{LMO} and the corresponding endpoint cases similarly as in \cite[Theorem 2.3]{LMMOV}. On the other hand, the key technical lemma \cite[Lemma 2.14]{LMMOV} can be extended to the bi-parameter setting very easily. Using these as in \cite{LMMOV} we obtain Theorem \ref{thm:ext}.

The above Rubio de Francia algorithm, of course, also yields the following standard linear extrapolation.
Let $\mu\in A_\infty$ and assume that
\begin{equation}\label{eq:eq23}
\| g\|_{L^{p_0}(w\mu)}\lesssim \|f\|_{L^{p_0}(w\mu)}
\end{equation}
for all $w \in A_{p_0}(\mu)$. Then the same inequality holds for all $p \in (1,\infty)$ and $w \in A_p(\mu)$.
Using this and Lemma \ref{lem:lem5} we  obtain Proposition  \ref{prop:vecvalmax} via the following standard argument.

\begin{proof}[Proof of Proposition \ref{prop:vecvalmax}]
From Lemma 8.2 we directly have that
$$
\Big\| \Big( \sum_i (M^\mu f^i_j)^s \Big)^{\frac 1s} \Big\|_{L^s(w\mu)}
\lesssim \Big\| \Big( \sum_i |f^i_j|^s \Big)^{\frac 1s} \Big\|_{L^s(w\mu)}
, \quad w \in A_s(\mu).
$$
Then, the extrapolation described around \eqref{eq:eq23} gives that
$$
\Big\| \Big( \sum_i (M^\mu f^i_j)^s \Big)^{\frac 1s} \Big\|_{L^t(w\mu)}
\lesssim \Big\| \Big( \sum_i |f^i_j|^s \Big)^{\frac 1s} \Big\|_{L^t(w\mu)}
, \quad w \in A_t(\mu).
$$
This in turn gives
$$
\Big\| \Big( \sum_j \Big( \sum_i (M^\mu f^i_j)^s \Big)^{\frac ts}\Big)^{\frac 1t} \Big\|_{L^t(w\mu)}
\lesssim \Big\|\Big( \sum_j \Big( \sum_i |f^i_j|^s \Big)^{\frac ts}\Big)^{\frac 1t} \Big\|_{L^t(w\mu)}
, \quad w \in A_t(\mu).
$$
Extrapolating once more concludes the proof.
\end{proof}

\section{The multi-parameter case}\label{sec:multi}
One can approach the multi-parameter case as follows.
\begin{enumerate}
\item What is the definition of an SIO/CZO?
The important base case is the linear multi-parameter definition given in \cite{Ou}. That can
be straightforwardly extended to the multilinear situation as in Section \ref{sec:SIOs}.
The definition becomes extremely lengthy due to the large number of different partial kernel representations, and 
for this reason we do not write it down explicitly. However, there is no complication in combining the linear multi-parameter
definition \cite{Ou} and our multilinear bi-parameter definition.

We mention that another way to define the operators would be to adapt a Jour\'ne \cite{Jo} style definition 
-- this kind of vector-valued definition would be shorter to state.  In this paper we do not use the Journ\'e style formulation.
However, for the equivalence of the Journ\'e style definitions and the style we use here 
see \cite{Grau, LMV, Ou}.

\item Is there a representation theorem in this generality? Yes -- see Remark \ref{rem:mrem}.
The linear multi-parameter representation theorem
is proved in \cite{Ou}. The multilinear representation theorems \cite{AMV, LMV} are stated only in the bi-parameter setting for convenience.
However, using the multilinear methods from \cite{AMV, LMV} the multi-parameter theorem \cite{Ou} can easily be generalised to
the multilinear setting. 

\item How do the model operators look like? Studying the above presented bi-parameter model operators, one realises that the philosophies in each parameter
are independent of each other -- for example, if one has a shift type of philosophy in a given parameter, one needs at least two cancellative Haar functions in
that parameter. With this logic it is clear how to define the $m$-parameter analogues just by working parameter by parameter.
 Alternatively, one can take all possible $m$-fold tensor products
of one-parameter $n$-linear model operators, and then just replace the appearing product coefficients by general coefficients. This yields the form
of the model operators.

We demonstrate this with an example of a bilinear tri-parameter partial paraproduct. Tri-parameter
partial paraproducts have the shift structure in one or two of the parameters. In the remaining parameters
there is a paraproduct structure. The following is an example of a partial paraproduct with the shift structure in the first parameter and
the paraproduct structure in the second and third parameters:
\begin{equation*}
\begin{split}
&\sum_{K=K^1 \times K^2 \times K^3 \in \calD} 
\sum_{\substack{I_1^1, I^1_2, I_{3}^1 \in \calD^1 \\ (I^1_j)^{(k_j)}=K^1}}
\Big[a_{K,(I^1_j)} \\
&\hspace{1cm}\Big\langle f_1, h_{I_1^1} \otimes \frac{1_{K^2}}{|K^2|} \otimes \frac{1_{K^3}}{|K^3|}\Big\rangle 
\Big\langle f_2, h_{I^1_2}^0 \otimes \frac{1_{K^2}}{|K^2|} \otimes h_{K^3} \Big\rangle 
\Big\langle f_3, h_{I^1_3} \otimes  h_{K^2} \otimes \frac{1_{K^3}}{|K^3|} \Big\rangle\Big].
\end{split}
\end{equation*}
Here $\calD= \calD^1 \times \calD^2 \times \calD^3$. The assumption on the
coefficients is that when $K^1$, $I^1_1$, $I^1_2$ and $I^1_3$ are fixed, then
$$
\| (a_{K,(I^1_j)})_{K^2 \times K^3 \in \calD^2 \times \calD^3} \|_{\BMO_{{\rm{prod}}}} 
\le \frac{|I^1_1|^{ \frac 12}|I^1_2|^{\frac12} |I^1_3|^{\frac12}}{|K^1|^2}.
$$
In the shift parameter there is at least two cancellative Haar functions and in the paraproduct parameters 
there is exactly one cancellative Haar and the remaining functions are normalised  indicators. Thus, this is a generalization
of $S \otimes \pi \otimes \pi$, where $S$ is a one-parameter shift and $\pi$ is a one-parameter paraproduct -- and all model
operators arise like this.

\item Finally, is it more difficult to show the genuinely multilinear weighted estimates for the $m$-parameter, $m \ge 3$, model operators compared to the bi-parameter model operators? 
When it comes to shifts and full paraproducts, there is no essential difference -- their boundedness always reduces to Theorem \ref{thm:thm3}, which
has an obvious $m$-parameter version. With out current proof, the answer for the partial paraproducts is more complicated. Thus, we will
elaborate on how to prove the weighted estimates for $m$-parameter partial paraproducts. Notice that previously e.g. in \cite{LMV} we
could only handle bi-parameter partial paraproducts as our proof exploited the one-parameter nature of the paraproducts by sparse domination.
Here we have already disposed of sparse domination, but the proof is still complicated and leads to some new philosophies in higher parameters.
\end{enumerate}

We now discuss how to prove a tri-parameter analogue of Theorem \ref{thm:thm4}. We can have a partial paraproduct with a bi-parameter paraproduct
component and a one-parameter shift component, or the other way around. Regardless of the form, the initial stages of the proof of Theorem \ref{thm:thm4}
can be used to reduce to estimating the weighted $L^2$ norms of  certain functions which are analogous to \eqref{eq:eq24} and \eqref{eq:eq25}.
Most of these norms can be estimated with similar steps as in the bi-parameter case. However, also a new type of variant appears.
An example of such a variant is given by
\begin{equation}\label{eq:eq27}
F_{j,K^1}=1_{K^1} \sum_{(L^1_j)^{(l_j)}=K^1} \frac{|L^1_j|^{\frac 12 }}{|K^1|}\Big( \sum_{K^2} \frac{1_{K^2}}{|K^2|}
M_{\calD^3}^{\langle \sigma_j \rangle_{K^{1,2}}}\big(\langle f_j, h_{L^1_j} \otimes h_{K^2} \rangle \langle \sigma_j \rangle_{K^{1,2}}^{-1}\big)^2\Big)^{\frac 12},
\end{equation}
where $f_j \colon \R^{d_1} \times \R^{d_2} \times \R^{d_3} \to \C$.
The goal is to estimate $\sum_{K^1} \| F_{j,K^1} \|_{L^2(\sigma_j)}^2$. Here we are denoting $K^{1,2} = K^1 \times K^2$ the original tri-parameter
rectangle being $K = K^1 \times K^2 \times K^3$, and for brevity we only write $\langle \sigma_j \rangle_{K^{1,2}}$ instead of
$ \langle \sigma_j \rangle_{K^{1,2}, 1, 2}$.

Comparing with \eqref{eq:eq25}, the key difference is that
in \eqref{eq:eq25} the measure of the maximal function depended only on $K^1$. Here, it depends also on $K^2$, and therefore we have maximal functions
with respect to different measures inside the norms. We will use the following lemma and the appearing new type of extrapolation trick to overcome this.
\begin{lem}\label{lem:lem8}
Let $\mu \in A_\infty(\R^{d_1} \times \R^{d_2})$ be a bi-parameter weight. 
Let $\calD=\calD^1\times \calD^2$ be a grid of bi-parameter dyadic rectangles in $\R^{d_1} \times \R^{d_2}$.
Suppose that for each $m \in \Z$ and $K^1 \in \calD^1$ we have a function $f_{m,K^1} \colon \R^{d_2} \to \C$. Then,
for all $p,s,t \in (1, \infty)$, the estimate
$$
\Big\| \Big( \sum_{m } \Big( \sum_{K^1} 1_{K^1} 
M_{\calD^2}^{\langle \mu \rangle_{K^1}} (f_{m,K^1})^t \Big)^{\frac st} \Big)^{\frac 1s} \Big\|_{L^p(w\mu)}
\lesssim \Big\| \Big( \sum_{m } \Big( \sum_{K^1} 1_{K^1}
|f_{m,K^1}|^t \Big)^{\frac st} \Big)^{\frac 1s} \Big\|_{L^p(w\mu)}
$$
holds for all $w \in A_p(\mu)$.
\end{lem}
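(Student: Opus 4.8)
The plan is to reduce the whole inequality to the bi-parameter weighted vector-valued maximal estimate of Proposition \ref{prop:vecvalmax}, thereby neutralising the fact that the measures $\langle \mu \rangle_{K^1} = \langle \mu \rangle_{K^1,1}$ depend on $K^1$. The device is to introduce, for each $m$ and each $K^1 \in \calD^1$, the bi-parameter function
\[
G_{m,K^1}(y_1,y_2) := 1_{K^1}(y_1)\,|f_{m,K^1}(y_2)|, \qquad (y_1,y_2) \in \R^{d_1} \times \R^{d_2},
\]
and to prove the pointwise comparison
\[
1_{K^1}(x_1)\, M_{\calD^2}^{\langle \mu \rangle_{K^1}}(f_{m,K^1})(x_2) \;\le\; M_{\calD}^{\mu}(G_{m,K^1})(x_1,x_2), \qquad \calD := \calD^1 \times \calD^2,
\]
valid for all $(x_1,x_2)$. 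This I would check by unfolding averages: for $J^2 \in \calD^2$, Fubini gives $\langle |f_{m,K^1}| \rangle_{J^2}^{\langle \mu \rangle_{K^1,1}} = \langle G_{m,K^1} \rangle_{K^1 \times J^2}^{\mu}$, using that $G_{m,K^1} = |f_{m,K^1}|$ on $K^1 \times \R^{d_2}$ and $\mu(K^1 \times J^2) = |K^1| \int_{J^2} \langle \mu \rangle_{K^1,1}$; and when $x_1 \in K^1$ and $x_2 \in J^2$, the rectangle $K^1 \times J^2 \in \calD$ contains $(x_1,x_2)$, so this average is at most $M_{\calD}^{\mu}(G_{m,K^1})(x_1,x_2)$. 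Taking the supremum over $J^2 \ni x_2$ gives the displayed pointwise bound (the case $x_1 \notin K^1$ being trivial).

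With this in hand, the left-hand side of the asserted inequality is dominated by $\big\| \, \| \, \|\{M_{\calD}^{\mu} G_{m,K^1}\}_{K^1}\|_{\ell^t} \|_{\ell^s} \big\|_{L^p(w\mu)}$, the inner $\ell^t$-sum running over $K^1 \in \calD^1$ and the outer $\ell^s$-sum over $m$. Since $M_{\calD}^{\mu} \le M^{\mu}$ pointwise, Proposition \ref{prop:vecvalmax} — applied with the inner sequence space $\ell^t$ and the outer one $\ell^s$, which is the same statement with the roles of $s$ and $t$ interchanged — bounds this by $\big\| \, \| \, \|\{G_{m,K^1}\}_{K^1}\|_{\ell^t} \|_{\ell^s} \big\|_{L^p(w\mu)}$. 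Finally $\sum_{K^1} |G_{m,K^1}(y_1,y_2)|^t = \sum_{K^1} 1_{K^1}(y_1)|f_{m,K^1}(y_2)|^t$, so this last quantity is exactly the right-hand side of the asserted inequality, and the proof is complete.

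The argument is genuinely short: the only substantive point is the pointwise comparison, and no new extrapolation is needed beyond what is already packaged into Proposition \ref{prop:vecvalmax}. The main thing to be careful about is purely organisational — that the grid $\calD^1 \times \calD^2$ used for $M_{\calD}^{\mu}$ is the one from the statement, that the hypotheses $\mu \in A_\infty$ and $w \in A_p(\mu)$ match those of Proposition \ref{prop:vecvalmax} verbatim, and that the two-sequence-space maximal inequality is invoked with the sequence spaces in the order $\ell^t$ (over $K^1$, inner) and $\ell^s$ (over $m$, outer). I do not foresee a genuine obstacle.
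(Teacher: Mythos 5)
Your proof is correct, and it takes a genuinely different route from the paper. The paper reduces first, by the scalar extrapolation discussed around \eqref{eq:eq23}, to a single--$K^1$, single--function estimate on $L^q(w\mu)$, rewrites that estimate as an $L^q(\langle w\mu\rangle_{K^1})$ bound on $\R^{d_2}$, and then performs a weight-transfer argument: writing $\langle w\mu\rangle_{K^1}=v\,\langle\mu\rangle_{K^1}$ with $v(x_2)=\langle w(\cdot,x_2)\rangle^{\mu(\cdot,x_2)}_{K^1}$, it verifies $[v]_{A_q(\langle\mu\rangle_{K^1})}\le[w]_{A_q(\mu)}$ and then invokes Lemma~\ref{lem:lem5}. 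Your argument instead \emph{lifts} each scalar function $f_{m,K^1}$ on $\R^{d_2}$ to the bi-parameter function $G_{m,K^1}=1_{K^1}\otimes|f_{m,K^1}|$, and shows by Fubini that averaging $G_{m,K^1}$ over the rectangle $K^1\times J^2$ against $\mu$ coincides with averaging $|f_{m,K^1}|$ over $J^2$ against $\langle\mu\rangle_{K^1,1}$, giving the pointwise domination $1_{K^1}M_{\calD^2}^{\langle\mu\rangle_{K^1}}(f_{m,K^1})\le M_{\calD}^{\mu}(G_{m,K^1})$; then Proposition~\ref{prop:vecvalmax} (with the roles of $s$ and $t$ exchanged) finishes, using $1_{K^1}^t=1_{K^1}$ to recover the right-hand side exactly. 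This avoids both the extra extrapolation step and the $v\in A_q(\langle\mu\rangle_{K^1})$ computation, at the cost of nothing: the only ingredient is Proposition~\ref{prop:vecvalmax}, which is already available. The paper's approach does have the fringe benefit of producing the reusable fact that $\langle w\mu\rangle_{K^1}/\langle\mu\rangle_{K^1}\in A_q(\langle\mu\rangle_{K^1})$ uniformly in $K^1$, but as a proof of Lemma~\ref{lem:lem8} your route is shorter and more transparent.
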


\begin{proof}
By extrapolation, see the discussion around \eqref{eq:eq23}, it suffices to take a function $f \colon \R^{d_2} \to \C$ and show that
$$
\big\| 1_{K^1} M_{\calD^2}^{\langle \mu \rangle_{K^1}} f \big\|_{L^q(w\mu)}
\lesssim \| 1_{K^1} f \|_{L^q(w\mu)}
$$
for some $q \in (1, \infty)$ and for all $w \in A_q(\mu)$. We fix some $w \in A_{q}(\mu)$. 
The above estimate can be rewritten as
$$
\big\| M_{\calD^2}^{\langle \mu \rangle_{K^1}} f \big\|_{L^q(\langle w\mu\rangle_{K^1})}|K^1|^{\frac 1q}
\lesssim \| f \|_{L^q(\langle w\mu\rangle_{K^1})}|K^1|^{\frac 1q}.
$$

We have the identity
\begin{equation}\label{eq:eq22}
\langle w\mu\rangle_{K^1}(x_2)
= \frac{\langle w\mu\rangle_{K^1}(x_2)}{\langle \mu \rangle_{K^1}(x_2)}\langle \mu \rangle_{K^1}(x_2)
= \langle w(\cdot,x_2) \rangle_{K^1}^{\mu(\cdot, x_2)}\langle \mu \rangle_{K^1}(x_2).
\end{equation}
Define $v(x_2) =\langle w(\cdot,x_2) \rangle_{K^1}^{\mu(\cdot, x_2)}$. We show that
$v \in A_q(\langle \mu \rangle_{K^1})$.
Let $I^2$ be a cube in $\R^{d_2}$. First, we have that
$$
\int_{I^2} v \langle \mu \rangle_{K^1}
= \int_{I^2} \langle w(\cdot,x_2) \rangle_{K^1}^{\mu(\cdot, x_2)}\langle \mu(\cdot,x_2) \rangle_{K^1} \ud x_2
=\int_{K^1 \times I^2}w \mu |K^1|^{-1}.
$$
Therefore,
$
\langle v \rangle_{I^2}^{\langle \mu \rangle_{K^1}}
=\langle w \rangle_{K^1 \times I^2 }^\mu.
$
H\"older's inequality gives that
$$
\big(\langle w(\cdot,x_2) \rangle_{K^1}^{\mu(\cdot, x_2)}\big)^{-\frac{1}{q-1}}
\le \big\langle w(\cdot,x_2)^{-\frac{1}{q-1}} \big \rangle_{K^1}^{\mu(\cdot, x_2)},
$$
which shows that
$$
\int_{I^2} v^{-\frac{1}{q-1}} \langle \mu \rangle_{K^1}
\le \int_{I^2}\big\langle w(\cdot,x_2)^{-\frac{1}{q-1}} \big \rangle_{K^1}^{\mu(\cdot, x_2)} \langle \mu(\cdot, x_2) \rangle_{K^1} \ud x_2
=\int_{K^1\times I^2} w^{-\frac{1}{q-1}} \mu |K^1|^{-1}.
$$
Thus, we have that
$$
\big(\langle v^{-\frac{1}{q-1}} \rangle_{I^2}^{\langle \mu \rangle_{K^1}}\big)^{q-1}
\le \big(\langle w^{-\frac{1}{q-1}} \rangle_{K^1 \times I^2}^{\mu}\big)^{q-1}.
$$
These estimates yield that $[v]_{A_q(\langle \mu \rangle_{K^1})} \le [w]_{A_q(\mu)}$.

Recall the identity \eqref{eq:eq22}. Since $\langle \mu \rangle_{K^1} \in A_\infty$, we have that
\begin{equation*}
\big\| M_{\calD^2}^{\langle \mu \rangle_{K^1}} f \big\|_{L^q(\langle w\mu\rangle_{K^1})}
=\big\| M_{\calD^2}^{\langle \mu \rangle_{K^1}} f \big\|_{L^q(v\langle \mu\rangle_{K^1})}
\lesssim \|  f \|_{L^q(v\langle \mu\rangle_{K^1}))}
=\|  f \|_{L^q(\langle w\mu\rangle_{K^1}))},
\end{equation*}
where we used Lemma \ref{lem:lem5}. This concludes the proof.
\end{proof}
We now show how to estimate \eqref{eq:eq27}. First, we have that $\|F_{j,K^1}\|_{L^2(\sigma_j)}^2$ is less than $ 2^{\frac{2l^1_jd_1}{s'}}$ multiplied by
\begin{equation*}
\bigg\| \bigg[\sum_{(L^1_j)^{(l_j)}=K^1} \bigg[\frac{|L^1_j|^{\frac 12 }}{|K^1|}\Big( \sum_{K^2} \frac{1_{K^2}}{|K^2|}
M_{\calD^3}^{\langle \sigma_j \rangle_{K^{1,2}}}
\big(\langle f_j, h_{L^1_j} \otimes h_{K^2} \rangle \langle \sigma_j \rangle_{K^{1,2}}^{-1}\big)^2\Big)^{\frac 12}\bigg]^{s} \bigg]^{\frac 1s} 
\bigg \|_{L^2(\langle\sigma_j\rangle_{K^1})}^2|K^1|.
\end{equation*}
The exponent $s \in (1, \infty)$ is chosen small enough so that we get a suitable dependence on the complexity
through $2^{\frac{2l^1_jd_1}{s'}}$, see the corresponding step in the bi-parameter case.
Since $\langle \sigma_j \rangle_{K^1} \in A_\infty(\R^{d_2} \times \R^{d_3})$, we can use Lemma \ref{lem:lem8} to have that the last term 
is dominated by
$$
\bigg\| \bigg[\sum_{(L^1_j)^{(l_j)}=K^1} \bigg[\frac{|L^1_j|^{\frac 12 }}{|K^1|}\Big( \sum_{K^2} \frac{1_{K^2}}{|K^2|}
\big|\langle f_j, h_{L^1_j} \otimes h_{K^2} \rangle \langle \sigma_j \rangle_{K^{1,2}}^{-1}\big|^2\Big)^{\frac 12}\bigg]^{s} \bigg]^{\frac 1s} 
\bigg \|_{L^2(\langle\sigma_j\rangle_{K^1})}^2|K^1|.
$$
After these key steps it only remains to use Proposition \ref{prop:prop3} twice in a very similar way as in the bi-parameter proof.
We are done.

\section{Applications}

\subsection{Mixed-norm estimates}
With our main result, Theorem \ref{thm:intro1}, and extrapolation, Theorem \ref{thm:ext}, the following result becomes immediate. 
\begin{thm}
Let $T$ be an $n$-linear $m$-parameter Calder\'on-Zygmund operator.
Let $1<p_i^j \le \infty$, $i = 1, \ldots, n$, with $\frac {1}{p^j}= \sum_i \frac{1}{p_i^j} >0$, $j = 1, \ldots, m$.
Then we have that
\[
\| T(f_1,\ldots, f_n)\|_{L^{p^1} \cdots L^{p^m}}\lesssim \prod_{i=1}^n \| f_i\|_{L^{p^1_i} \cdots L^{p^m_i}}.
\]
\end{thm}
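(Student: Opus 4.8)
The plan is to derive the estimate from the weighted bound of Theorem~\ref{thm:intro1} by iterating the multilinear extrapolation theorem, Theorem~\ref{thm:ext}, peeling off one variable at a time starting from the innermost one $x_m$. Concretely, I would first isolate and prove by induction on $m$ the following statement, which no longer mentions $T$: \emph{if $g,f_1,\dots,f_n$ are functions on $\R^d=\R^{d_1}\times\cdots\times\R^{d_m}$ and there is a tuple $\vec r=(r_1,\dots,r_n)$ with $1<r_i\le\infty$, $1/r=\sum_i 1/r_i>0$, such that}
\[
\|g\sigma\|_{L^r}\lesssim\prod_{i=1}^n\|f_i\sigma_i\|_{L^{r_i}}\quad\textnormal{for all }\vec\sigma=(\sigma_1,\dots,\sigma_n)\in A_{\vec r},\ \ \sigma=\textstyle\prod_i\sigma_i,
\]
\emph{then for all exponents $(p_i^j)$ with $1<p_i^j\le\infty$ and $1/p^j=\sum_i 1/p_i^j>0$ one has $\|g\|_{L^{p^1}\cdots L^{p^m}}\lesssim\prod_i\|f_i\|_{L^{p^1_i}\cdots L^{p^m_i}}$.} Granting this, the theorem follows at once, since Theorem~\ref{thm:intro1} provides precisely this hypothesis for $g=T(f_1,\dots,f_n)$ (for every admissible seed tuple $\vec r$, in particular for one fixed choice).

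For $m=1$ the claim is immediate: by Theorem~\ref{thm:ext} the hypothesis for $\vec r$ self-improves to the same inequality for the tuple $(p_1^1,\dots,p_n^1)$ and all $\vec\sigma\in A_{(p_1^1,\dots,p_n^1)}$, and one takes $\sigma_i\equiv 1$. For $m\ge2$ I would argue as follows. First apply Theorem~\ref{thm:ext} to upgrade the hypothesis so that it holds for \emph{every} exponent tuple $\vec q$ and every $\vec\sigma\in A_{\vec q}$. Then restrict attention to weights $\sigma_i$ depending only on $x'=(x_1,\dots,x_{m-1})$; an elementary factorization of rectangle averages (the cube average in $x_m$ of a function independent of $x_m$ is trivial) shows that such a tuple lies in the $(m-1)$-parameter class $A_{\vec q}(\R^{d'})$, $\R^{d'}=\R^{d_1}\times\cdots\times\R^{d_{m-1}}$, exactly when it lies in $A_{\vec q}(\R^d)$. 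Choosing $\vec q=(p_1^m,\dots,p_n^m)$ and applying Tonelli's theorem in the $x_m$-variable turns the hypothesis into
\[
\|G\sigma\|_{L^{p^m}(\R^{d'})}\lesssim\prod_{i=1}^n\|F_i\sigma_i\|_{L^{p_i^m}(\R^{d'})},\qquad\vec\sigma\in A_{(p_1^m,\dots,p_n^m)}(\R^{d'}),
\]
where $G(x'):=\|g(x',\cdot)\|_{L^{p^m}(\R^{d_m})}$ and $F_i(x'):=\|f_i(x',\cdot)\|_{L^{p_i^m}(\R^{d_m})}$ are now \emph{fixed} functions on $\R^{d'}$, independent of any remaining exponent choice. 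This is precisely the hypothesis of the claim for the $(m-1)$-parameter space $\R^{d'}$, with seed tuple $(p_1^m,\dots,p_n^m)$, so the induction hypothesis yields $\|G\|_{L^{p^1}\cdots L^{p^{m-1}}}\lesssim\prod_i\|F_i\|_{L^{p^1_i}\cdots L^{p^{m-1}_i}}$, which is exactly the asserted inequality because the $x_m$-norm occupies the innermost slot.

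The only point that needs care is the ordering of the two moves in the inductive step: one must invoke Theorem~\ref{thm:ext} to pass to arbitrary exponent tuples \emph{before} freezing the $x_m$-exponent at $(p_1^m,\dots,p_n^m)$, because only then are the functions $G,F_i$ on which the next round of extrapolation acts genuinely independent of the weights — otherwise the ``functions'' keep changing and extrapolation cannot be applied. The remaining ingredients, namely the transfer of the $A_{\vec q}$ condition between $\R^d$ and its sub-product $\R^{d'}$ and the Tonelli step (valid uniformly for finite and infinite exponents), are routine, and no analysis beyond Theorems~\ref{thm:intro1} and~\ref{thm:ext} enters. As usual, and as elsewhere in the paper, one may first assume the $f_i$ are finite linear combinations of indicators of rectangles so that all the norms above are finite, and then pass to general $f_i$ by density.
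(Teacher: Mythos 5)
Your proof is correct and follows essentially the same route as the paper's one-line sketch (``The proof is immediate by extrapolating with tensor form weights. For the general idea see \cite[Theorem 4.5]{LMMOV}''); the iterative peeling-off of one parameter at a time via Theorem~\ref{thm:ext}, with Tonelli to absorb the innermost norm into new fixed functions $G,F_i$ and the transfer of the $A_{\vec q}$ condition to the sub-product, is precisely what that sketch is alluding to. You correctly identify the key point (upgrading to all tuples \emph{before} freezing the $x_m$-exponent so the new functions are weight-independent), and you correctly use that the $m$-parameter version of Theorem~\ref{thm:ext}, established in Section~\ref{app:app1}, is what is actually needed on each sub-product.
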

\begin{rem}
We understand this as an a priori estimate with $f_i\in L_c^\infty$ -- this is only a concern when some $p_i^j$ is $\infty$. 
In \cite{LMMOV}, which concerned the bilinear bi-parameter case with \emph{tensor} form CZOs, we went to great lengths to check that
this restriction can always be removed. We do not want to get into such considerations here, and prefer this a priori
interpretation at least when $n \ge 3$. See also \cite{LMV} for some previous results for bilinear bi-parameter CZOs that are not of tensor form, but where, compared to \cite{LMMOV}, the range of exponents had some limitations in the $\infty$ cases. See also \cite{DO}.

We also mention that mixed-norm estimates for multilinear bi-parameter Coifman-Meyer operators have been previously
obtained in \cite{BM1} and \cite{BM2}.
Related to this, bi-parameter mixed norm Leibniz rules were proved in \cite{OW}. 

\end{rem}

The proof is immediate by extrapolating with tensor form weights. For the general idea see \cite[Theorem 4.5]{LMMOV} -- here
the major simplification is that everything can be done with extrapolation and the operator-valued analysis is not needed. This is because the weighted
estimate, Theorem \ref{thm:intro1}, is now with the genuinely multilinear weights unlike in \cite{LMMOV, LMV}.

\subsection{Commutators}
We will state these applications in the bi-parameter case $\R^d = \R^{d_1} \times \R^{d_2}$. The $m$-parameter versions are obvious.
We define 
$$
[b, T]_k(f_1,\ldots, f_n) := bT(f_1, \ldots, f_n) - T(f_1, \ldots, f_{k-1}, bf_k, f_{k+1}, \ldots, f_n).
$$
One can also define the iterated commutators as usual. We say that $b \in \operatorname{bmo}$ if
$$
\|b\|_{\operatorname{bmo}} = \sup_R \frac{1}{|R|}
\int_R |b-\ave{b}_R| < \infty,
$$
where the supremum is over rectangles.
Recall that given $b \in \operatorname{bmo}$, we have 
\begin{equation}\label{e:litbmo}
\|b\|_{\rm{bmo}}\sim \max\big(\esssup_{x_1\in \R^{d_1}} \|b(x_1,\cdot)\|_{\BMO(\R^{d_2})}, \esssup_{x_2\in \R^{d_2}} \|b(\cdot,x_2)\|_{\BMO(\R^{d_1})}\big).
\end{equation}
See e.g. \cite{HPW}.
In the one-parameter case the following was proved in \cite[Lemma 5.6]{DHL}.
\begin{prop}\label{prop:dhl}
Let $\vec p=(p_1,\dots, p_n)$ with $1<p_1,\ldots, p_n<\infty$ and $\frac 1p=\sum_i \frac 1{p_i} <1$. Let $\vec w = (w_1,\dots, w_n)\in A_{\vec p}$. Then for any $1\le j\le n$ we have
\[
\vec w_{b,z}:=(w_1,\dots, w_je^{\Re(bz)},\dots, w_n)\in A_{\vec p}
\]with $[\vec w_{b,z}]_{A_{\vec p}}\lesssim [\vec w]_{A_{\vec p}}$ provided that 
$$
|z|\le \frac{\epsilon}{\max([w^p]_{A_\infty}, \max_i [w_i^{-p_i'}]_{A_\infty}) \|b\|_{\BMO}},
$$
where $\epsilon$ depends on $\vec p$ and the dimension of the underlying space.
\end{prop}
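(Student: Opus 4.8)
The plan is to follow the one-parameter argument of \cite[Lemma 5.6]{DHL}, replacing the two analytic ingredients used there -- the John--Nirenberg inequality and the sharp reverse H\"older inequality for $A_\infty$ weights -- by their rectangular bi-parameter counterparts, and then to track constants carefully enough that the smallness threshold on $|z|$ comes out linear in the relevant $A_\infty$ characteristics. Recall from Lemma \ref{lem:lem1} that $w^p \in A_\infty(\R^{d_1}\times\R^{d_2})$ and each $w_i^{-p_i'} \in A_\infty(\R^{d_1}\times\R^{d_2})$ (with constants controlled by $[\vec w]_{A_{\vec p}}$), and recall $\|b\|_{\operatorname{bmo}} \sim \|b\|_{\BMO}$ via \eqref{e:litbmo}; these are the only structural facts about $\vec w$ and $b$ that enter.

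First I would record two bi-parameter tools. (i) A \emph{rectangular John--Nirenberg inequality}: there is $\lambda_0 = \lambda_0(d_1,d_2) > 0$ with $\sup_R \ave{e^{\lambda|b-\ave{b}_R|}}_R \le 2$ whenever $|\lambda| \le \lambda_0/\|b\|_{\operatorname{bmo}}$. This is obtained by writing $b - \ave{b}_R = (b - \ave{b}_{I^1,1}) + (\ave{b}_{I^1,1} - \ave{b}_R)$ for $R = I^1\times I^2$, applying Cauchy--Schwarz, and using the classical one-parameter John--Nirenberg inequality in each variable together with \eqref{e:litbmo} (the point being that $\ave{b}_{I^1,1}$ is a function of $x_2$ alone whose $\BMO(\R^{d_2})$ norm is $\lesssim \|b\|_{\operatorname{bmo}}$). (ii) A \emph{quantitative bi-parameter reverse H\"older inequality}: if $u \in A_\infty$ then $u \in RH_s$ with $s = 1 + c_{d_1,d_2}/[u]_{A_\infty}$ and $\ave{u^s}_R^{1/s} \le 4\ave{u}_R$ for every rectangle $R$; in particular $s' \sim_{d_1,d_2} [u]_{A_\infty}$. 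This follows by iterating the one-parameter sharp reverse H\"older inequality in each variable, using the $A_\infty$ analogues of \eqref{eq:eq28} and of the remark after it (namely $[u(\cdot,x_2)]_{A_\infty(\R^{d_1})} \le [u]_{A_\infty}$ for a.e. $x_2$, and $\ave{u}_{I^1,1} \in A_\infty(\R^{d_2})$ uniformly in $I^1$ with constant $\le [u]_{A_\infty}$).

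With these in hand the rest is algebra. Fix a rectangle $R$ and set $\theta_R := \Re(z\ave{b}_R)$. Applying (ii) to $u = w^p$ with exponent $s$ and then H\"older,
\[
\ave{w^p e^{p\Re(bz)}}_R = e^{p\theta_R}\ave{w^p e^{p\Re((b-\ave{b}_R)z)}}_R \le e^{p\theta_R}\ave{w^{ps}}_R^{1/s}\ave{e^{ps'|z|\,|b-\ave{b}_R|}}_R^{1/s'} \le 4\,e^{p\theta_R}\ave{w^p}_R,
\]
where the last step uses the reverse H\"older bound and tool (i); the latter is legitimate once $ps'|z| \le \lambda_0/\|b\|_{\operatorname{bmo}}$, i.e. once $|z| \lesssim_{\vec p,d} 1/([w^p]_{A_\infty}\|b\|_{\operatorname{bmo}})$ since $s' \sim [w^p]_{A_\infty}$. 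The identical computation with $u = w_j^{-p_j'}$ and its exponent $s_j$ gives $\ave{w_j^{-p_j'}e^{-p_j'\Re(bz)}}_R \le 4\,e^{-p_j'\theta_R}\ave{w_j^{-p_j'}}_R$, valid once $|z| \lesssim_{\vec p,d} 1/([w_j^{-p_j'}]_{A_\infty}\|b\|_{\operatorname{bmo}})$. Both smallness conditions are implied by the stated hypothesis on $|z|$ for a suitable $\epsilon = \epsilon(\vec p,d)$.

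Finally, since the slots $i \ne j$ are unchanged, the $A_{\vec p}$ quantity of $\vec w_{b,z}$ over $R$ is
\[
\ave{w^p e^{p\Re(bz)}}_R^{1/p}\ave{w_j^{-p_j'}e^{-p_j'\Re(bz)}}_R^{1/p_j'}\prod_{i\ne j}\ave{w_i^{-p_i'}}_R^{1/p_i'} \le 4^{1+1/p_j'}\,e^{\theta_R}e^{-\theta_R}\,\ave{w^p}_R^{1/p}\prod_{i=1}^n\ave{w_i^{-p_i'}}_R^{1/p_i'},
\]
and the two exponential factors cancel; taking the supremum over $R$ gives $[\vec w_{b,z}]_{A_{\vec p}} \lesssim [\vec w]_{A_{\vec p}}$. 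The only place where genuine bi-parameter care is needed -- and what I expect to be the main (though routine) obstacle -- is verifying the rectangular John--Nirenberg inequality (i) and, more delicately, the sharp reverse H\"older inequality (ii) with the quantitative dependence $s-1 \sim 1/[u]_{A_\infty}$; everything downstream of that is the weight algebra above, which does not see the number of parameters.
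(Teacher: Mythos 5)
Your argument is correct, and it takes a genuinely different route from the paper. The paper does not prove Proposition \ref{prop:dhl} at all: it is quoted verbatim as the one-parameter \cite[Lemma 5.6]{DHL}, and the bi-parameter analogue is then obtained afterwards by slicing -- combining the cited lemma with \eqref{e:litbmo} and the two-sided estimate $\max(\cdot,\cdot)\le[\vec w]_{A_{\vec p}(\R^d)}\lesssim\max(\cdot,\cdot)^\gamma$ for the sliced $A_{\vec p}$ constants. That route needs no new analytic inequalities, but the power $\gamma$ is lost, which is why the paper only claims $[\vec w_{b,z}]_{A_{\vec p}}\lesssim[\vec w]_{A_{\vec p}}^\gamma$ in the product-space version. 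You instead re-run the \cite{DHL} Cauchy--integral weight algebra directly over rectangles, having first supplied the two missing bi-parameter ingredients: the rectangular John--Nirenberg inequality (obtained correctly by splitting $b-\ave{b}_R=(b-\ave{b}_{I^1,1})+(\ave{b}_{I^1,1}-\ave{b}_R)$ and applying one-parameter John--Nirenberg to each slice, which is exactly what \eqref{e:litbmo} is for) and the quantitative rectangular reverse H\"older inequality with $s'\sim_d[u]_{A_\infty}$, which you reduce to iterating the one-parameter sharp RHI using $[u(\cdot,x_2)]_{A_\infty(\R^{d_1})}\le[u]_{A_\infty}$ and $[\ave{u}_{I^1,1}]_{A_\infty(\R^{d_2})}\le[u]_{A_\infty}$ (both hold, the latter by Jensen). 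The payoff is the sharper conclusion $[\vec w_{b,z}]_{A_{\vec p}}\lesssim[\vec w]_{A_{\vec p}}$ with no power loss. One small slip: your opening remark ``$\|b\|_{\operatorname{bmo}}\sim\|b\|_{\BMO}$ via \eqref{e:litbmo}'' is not what \eqref{e:litbmo} says -- $\operatorname{bmo}$ is strictly contained in cube-$\BMO(\R^d)$ and \eqref{e:litbmo} only identifies $\operatorname{bmo}$ with the supremum of slice BMO norms -- but you in fact use \eqref{e:litbmo} in the correct (slice) form everywhere it matters, so the argument is unaffected. Both your bound and the paper's are more than enough for the downstream commutator estimate.
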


If $1< p_1, \dots, p_n < \infty$, then there holds that
$[\vec w]_{A_{\vec p}(\R^{d})} < \infty$ if and only if
\[
\max \big(\esssup_{x_1\in \R^{d_1}}[\vec w(x_1,\cdot)]_{A_{\vec p}(\R^{d_2})},  \esssup_{x_2\in \R^{d_2}}[\vec w(\cdot,x_2)]_{A_{\vec p}(\R^{d_1})}\big) < \infty.
\]
Moreover, we have that the above maximum satisfies $\max (\cdot, \cdot) \le [\vec w]_{A_{\vec p}(\R^{d})}\lesssim \max (\cdot, \cdot)^{\gamma}$
where $\gamma$ is allowed to depend on $\vec p$ and $d$. The first estimate follows from the Lebesgue differentiation theorem. The second estimate can be 
proved by using Lemma \ref{lem:lem1} and the corresponding linear statement, see \eqref{eq:eq28}.
Using this, \eqref{e:litbmo} and Proposition \ref{prop:dhl} gives a bi-parameter version of Proposition \ref{prop:dhl} --  
the statement is obtained by replacing $\BMO$ with $\operatorname{bmo}$, and the quantitative estimate is of the form 
$[w_{b,z}]_{A_{\vec p}} \lesssim  [ \vec w ]_{A_{\vec p}}^\gamma$.
Now, we have everything ready to prove the following commutator estimate.
\begin{thm}
Suppose $T$ is an $n$-linear bi-parameter CZO in $\R^d = \R^{d_1} \times \R^{d_2}$,
$1 < p_1, \ldots, p_n \le \infty$ and $1/p = \sum_i 1/p_i> 0$.
Suppose also that $b \in \operatorname{bmo}$.
Then for all $1\le k\le n$ we have the commutator estimate
$$
\| [b, T]_k(f_1,\dots, f_n) w\|_{L^p} \lesssim \|b\|_{\operatorname{bmo}} \prod_{i=1}^n \|f_iw_i\|_{L^{p_i}}
$$
for all $n$-linear bi-parameter weights $\vec w = (w_1, \ldots, w_n) \in A_{\vec p}$.
Analogous results hold for iterated commutators.
\end{thm}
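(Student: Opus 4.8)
The plan is to run the classical Cauchy integral (conjugation) trick of Coifman--Rochberg--Weiss, now fed by the genuinely multilinear weighted bound of Theorem \ref{thm:intro1} and by the perturbed-weight stability of Proposition \ref{prop:dhl} (and its bi-parameter counterpart, recorded just above). First I would reduce to a single convenient tuple of exponents: the claimed estimate is of exactly the form to which the extrapolation Theorem \ref{thm:ext} applies, with $g=[b,T]_k(f_1,\dots,f_n)$ and $f_1,\dots,f_n$ fixed and the scalar $\|b\|_{\operatorname{bmo}}$ absorbed into the constant (extrapolation reproduces a multiplicative constant linearly). Hence it is enough to prove
\[
\| [b,T]_k(f_1,\dots,f_n)\,w\|_{L^p}\lesssim \|b\|_{\operatorname{bmo}}\prod_{i=1}^n\|f_iw_i\|_{L^{p_i}}
\]
for one tuple with $1<p_1,\dots,p_n<\infty$ and $1/p=\sum_i 1/p_i<1$ --- say $p_1=\dots=p_n=2n$, where $1/p=1/2$ --- and for all $\vec w\in A_{\vec p}$; extrapolation then delivers the full range $1<p_i\le\infty$, $1/p>0$.

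For this base tuple I would fix $\vec w=(w_1,\dots,w_n)\in A_{\vec p}$ and, for $z\in\C$, consider
\[
T_z(f_1,\dots,f_n):=e^{zb}\,T\big(f_1,\dots,f_{k-1},e^{-zb}f_k,f_{k+1},\dots,f_n\big),
\]
working a priori with $f_i\in L^\infty_c$; local exponential integrability of $\operatorname{bmo}$ functions (John--Nirenberg) makes $e^{-zb}f_k\in L^{p_k}$ and $z\mapsto T_z(f_1,\dots,f_n)$ holomorphic for $|z|$ small depending on $\|b\|_{\operatorname{bmo}}$. Then $[b,T]_k(f_1,\dots,f_n)=\frac{d}{dz}\big|_{z=0}T_z(f_1,\dots,f_n)$, so by Cauchy's formula on the circle $|z|=\varepsilon$,
\[
[b,T]_k(f_1,\dots,f_n)=\frac{1}{2\pi i}\int_{|z|=\varepsilon}\frac{T_z(f_1,\dots,f_n)}{z^2}\,dz.
\]
Writing $\vec w_{b,z}=(w_1,\dots,w_ke^{\Re(bz)},\dots,w_n)$, whose product weight is $we^{\Re(bz)}$, and using $|e^{zb}|=e^{\Re(zb)}$ together with the pointwise identity $e^{-\Re(zb)}|f_k|\cdot w_ke^{\Re(zb)}=|f_k|w_k$, Theorem \ref{thm:intro1} applied with the tuple $\vec w_{b,z}$ gives
\[
\|T_z(f_1,\dots,f_n)\,w\|_{L^p}=\big\|T\big(f_1,\dots,e^{-zb}f_k,\dots,f_n\big)\cdot we^{\Re(bz)}\big\|_{L^p}\lesssim C\big([\vec w_{b,z}]_{A_{\vec p}}\big)\prod_{i=1}^n\|f_iw_i\|_{L^{p_i}}.
\]
By the bi-parameter version of Proposition \ref{prop:dhl} (via \eqref{e:litbmo}), choosing $\varepsilon\sim\big(\max([w^p]_{A_\infty},\max_i[w_i^{-p_i'}]_{A_\infty})\,\|b\|_{\operatorname{bmo}}\big)^{-1}$ ensures $\vec w_{b,z}\in A_{\vec p}$ with $[\vec w_{b,z}]_{A_{\vec p}}\lesssim[\vec w]_{A_{\vec p}}^{\gamma}$ uniformly for $|z|=\varepsilon$, so the right-hand side above is uniform in $z$ on this circle. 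Minkowski's integral inequality then gives
\[
\|[b,T]_k(f_1,\dots,f_n)\,w\|_{L^p}\le\frac{1}{2\pi\varepsilon^2}\int_{|z|=\varepsilon}\|T_z(f_1,\dots,f_n)\,w\|_{L^p}\,|dz|\lesssim\frac1\varepsilon\prod_{i=1}^n\|f_iw_i\|_{L^{p_i}}\lesssim\|b\|_{\operatorname{bmo}}\prod_{i=1}^n\|f_iw_i\|_{L^{p_i}},
\]
which is the desired base-tuple estimate. Extrapolation (Theorem \ref{thm:ext}) then completes the proof of the displayed bound, and iterated commutators are handled the same way, introducing one complex parameter per $b$-factor (a multivariate Cauchy integral with kernel $\prod_\ell z_\ell^{-2}$), differentiating the corresponding number of times, and noting that each elementary conjugation step perturbs only one $w_i$ by an exponential of $\Re(\,\cdot\, b)$, so Proposition \ref{prop:dhl} still applies at every stage.

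All the genuinely new analytic content sits in Theorem \ref{thm:intro1}; the only delicate points in the argument above are bookkeeping ones, and they are exactly what I expect to require care: (i) verifying that $z\mapsto T_z(f_1,\dots,f_n)$ is honestly holomorphic in a disc of radius comparable to $1/\|b\|_{\operatorname{bmo}}$, so that the Cauchy representation is legitimate and differentiation under the integral is valid (this follows from John--Nirenberg applied on the compact support of the $f_i\in L^\infty_c$, and then density removes the a priori restriction); and (ii) confirming that the perturbed tuples $\vec w_{b,z}$ remain in $A_{\vec p}$ with uniformly controlled characteristic for $|z|$ of the chosen size, which is precisely the bi-parameter form of Proposition \ref{prop:dhl}. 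Beyond these, the argument is the standard Coifman--Rochberg--Weiss scheme combined with the multilinear extrapolation of Theorem \ref{thm:ext}.
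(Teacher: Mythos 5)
Your proposal is correct and coincides essentially verbatim with the paper's own proof: the same Coifman--Rochberg--Weiss conjugation/Cauchy-integral argument, the same appeal to the bi-parameter variant of Proposition \ref{prop:dhl} to keep $\vec w_{b,z}\in A_{\vec p}$ for $|z|$ of the stated size, the same Minkowski step, and the same reduction to a single Banach-range tuple followed by extrapolation via Theorem \ref{thm:ext} (with iterated commutators by the identical iteration). The only cosmetic differences are that the paper normalizes $\|b\|_{\operatorname{bmo}}=1$ and does not single out a concrete tuple such as $p_i=2n$.
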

\begin{proof}
We assume $\|b\|_{\operatorname{bmo}} = 1$.
It suffices to study $[b, T]_1$, and in fact we shall prove the following principle. Once we have 
\[
\| T(f_1,\dots, f_n) w\|_{L^p}\lesssim \prod_{i=1}^n \|f_iw_i\|_{L^{p_i}}
\]
for some $\vec p$ in the Banach range, then 
\[
\| [b,T]_1(f_1,\dots, f_n) w\|_{L^p}\lesssim \prod_{i=1}^n \|f_iw_i\|_{L^{p_i}}.
\]
In this principle the form of the $n$-linear operator plays no role ($T$ does not need to be a CZO).
The iterated cases follow immediately from this principle and the full range then follows from extrapolation. 

Define 
\[
T_z^1(f_1, \dots, f_n)=e^{zb} T(e^{-zb}f_1, f_2,\dots, f_n).
\]
Then, by the Cauchy integral theorem, we get for nice functions $f_1,\dots, f_n$, that
\[
[b, T]_1(f_1,\dots, f_n)=\frac{\ud}{\ud z}T_z^1(f_1,\dots, f_n)\Big|_{z=0}
=\frac {-1}{2\pi i} \int_{|z|=\delta} \frac{T_z^1(f_1,\dots, f_n)}{z^2}\ud z,\quad \delta>0. 
\]
Since $p\ge 1$, by Minkowski's inequality
\[
\| [b, T]_1(f_1,\dots, f_n)w\|_{L^p}\le \frac 1{2\pi \delta^2}\int_{|z|=\delta} \|T_z^1(f_1,\dots, f_n) w\|_{L^p}|\ud z|.
\]
We choose 
$$
\delta
\sim \frac{1}{\max([w^p]_{A_\infty}, \max_i [w_i^{-p_i'}]_{A_\infty}) }.
$$
This allows to use the bi-parameter version of Proposition \ref{prop:dhl} to have that
\begin{align*}
\|T_z^1(f_1,\dots, f_n) w\|_{L^p} &=\|T(e^{-zb}f_1,f_2,\dots, f_n) we^{\Re(bz)}\|_{L^p}\\
&\lesssim \| e^{-zb} f_1w_1e^{\Re(bz)}\|_{L^{p_1}}\prod_{i=2}^n \|f_iw_i\|_{L^{p_i}}
=  \prod_{i=1}^n \|f_iw_i\|_{L^{p_i}}.
\end{align*}
The claim follows.
\end{proof}


\end{document}